\newtheorem{dummy}{dummy}[subsection]
\newtheorem{theorem}[dummy]{Theorem}
\newtheorem{proposition}[dummy]{Proposition}
\newtheorem{conjecture}[dummy]{Conjecture}
\newtheorem{corollary}[dummy]{Corollary}
\newtheorem{lemma}[dummy]{Lemma}
\newtheorem{definition}[dummy]{Definition}
\theoremstyle{definition}
\newtheorem{example}[dummy]{Example}
\newtheorem{remark}[dummy]{Remark}
\newcommand\MRnumber[2]{}
\newcommand\cC{\mathcal C}
\newcommand\cD{\mathcal D}
\newcommand\cV{\mathcal V}
\newcommand\cX{\mathcal X}
\newcommand\cY{\mathcal Y}
\newcommand\bC{\mathbb C}
\newcommand\bK{\mathbb K}
\newcommand\bN{\mathbb N}
\newcommand\bR{\mathbb R}
\newcommand\rB{\mathrm B}
\DeclareFontFamily{U}{min}{}
\DeclareFontShape{U}{min}{m}{n}{<-> udmj30}{}
\newcommand\yo{\!\text{\usefont{U}{min}{m}{n}\symbol{'207}}\!}
\newcommand\condense{\mathrel{\,\hspace{.75ex}\joinrel\rhook\joinrel\hspace{-.75ex}\joinrel\rightarrow}}
\newcommand\from\leftarrow
\newcommand\mono\hookrightarrow
\newcommand\epi\twoheadrightarrow
\newcommand\<\langle
\renewcommand\>\rangle
\newcommand\sminus\smallsetminus
\DeclareMathOperator\id{id}
\DeclareMathOperator\Kar{Kar}
\DeclareMathOperator\End{End}
\DeclareMathOperator\Mat{Mat}
\DeclareMathOperator\Psh{Psh}
\DeclareMathOperator\Cau{Cau}
\newcommand\Vect{\cat{Vec}}
\newcommand\SVect{\cat{SVec}}
\newcommand\Cat{\cat{Cat}}
\DeclareMathOperator*\colim{colim}
\newcommand\define[1]{\emph{#1}}
\newcommand\cat[1]{\mathbf{#1}}
\tikzset{ribbon/.style={draw=white,double=purple,very thick,double distance=2pt}}
\tikzset{onearrowlate/.style={postaction={decorate}, decoration={markings,mark=at position .7 with {\arrow[draw,line width=1pt]{>}}}}}
\tikzset{onearrow/.style={postaction={decorate}, decoration={markings,mark=at position .55 with {\arrow[draw,line width=1pt]{>}}}}}
\tikzset{condensation/.style={-,
          decoration={markings, 
                    mark=at position -1ex with {\arrow{left hook}},
                    mark=at position 1 with {\arrow[scale=1.5]{>}},
                   },
        preaction={decorate}}}
\tikzset{condensationlarge/.style={-,
          decoration={markings, 
                    mark=at position -1.5ex with {\arrow[scale=1.25]{left hook}},
                    mark=at position 1 with {\arrow[scale=1.75]{>}},
                   },
        preaction={decorate}}}
\tikzset{doublecondensation/.style={double,
        double distance=1.5pt,
        shorten <=6pt, shorten >=6pt, 
        decoration={markings, 
                    mark=at position -11pt with {\arrow[scale=1.25]{left hook}},
                    mark=at position -4.25pt with {\arrow[scale=1.5]{>}},
                   },
        preaction={decorate}}}
\tikzset{doublecondensationlarge/.style={double,
        double distance=1.5pt,
        shorten <=6pt, shorten >=6pt, 
        decoration={markings, 
                    mark=at position -11pt with {\arrow[scale=1.5]{left hook}},
                    mark=at position -4.25pt with {\arrow[scale=2]{>}},
                   },
        preaction={decorate}}}
\tikzset{
    dot/.style={circle,draw,fill,inner sep=1pt},
    arrow/.style={->,thick,shorten <=2pt,shorten >=2pt},
    twoarrow/.style={double,double distance=1.5pt,shorten <=9pt,shorten >=10pt,decoration={markings,mark=at position -8pt with {\arrow[scale=1.75]{>}}},preaction={decorate}},
    twoarrowlonger/.style={double,double distance=1.5pt,shorten <=5pt,shorten >=6pt,decoration={markings,mark=at position -4pt with {\arrow[scale=1.75]{>}}},preaction={decorate}},
    twoarrowshorter/.style={double,double distance=1.5pt,shorten <=13pt,shorten >=14pt,decoration={markings,mark=at position -12pt with {\arrow[scale=1.75]{>}}},preaction={decorate}},
    twoarrowshorthead/.style={double,double distance=1.5pt,shorten <=9pt,shorten >=20pt,decoration={markings,mark=at position -18pt with {\arrow[scale=1.75]{>}}},preaction={decorate}},
    threearrowpart1/.style={ thick,double,double distance=3pt,shorten <=9pt,shorten >=11pt},
    threearrowpart2/.style={ thick,shorten <=9pt,shorten >=10pt},
    threearrowpart3/.style={ shorten <=9pt,shorten >=10pt,decoration={markings,mark=at position -8pt with {\arrow[scale=3]{>}}},preaction={decorate}},
fourarrowpart1/.style={thick, double,double distance=4pt,shorten <=1pt,shorten >=2.75pt},
fourarrowpart2/.style={thick,  double,double distance=1pt,shorten <=1pt,shorten >=1.25pt,decoration={markings,mark=at position -.05pt with {\arrow[scale=3,ultra thin]{>}}},preaction={decorate} }
}
\newcommand{\be}{\begin{equation*}}
\newcommand{\ee}{\end{equation*}}
\title{Condensations in higher categories}
\author{Davide Gaiotto and Theo Johnson-Freyd}
\thanks{Research at the Perimeter Institute is supported by the Government of Canada through Industry Canada and by the Province of Ontario through the Ministry of Economic Development and Innovation. The Perimeter Institute is in the Haldimand Tract, land promised to the Six Nations. The first-named author is supported by grants from the Krembil Foundation and the NSERC Discovery Grant program. The second-named author is supported by grants from the Simons Foundation and the NSERC Discovery Grant program. We thank David Reutter and Claudia Scheimbauer for suggestions and discussions, and we thank the anonymous referees for their detailed comments.}
\begin{document}

\begin{abstract}
We present a higher-categorical generalization of the ``Karoubi envelope'' construction from ordinary category theory, and argue that, like the ordinary Karoubi envelope, our higher Karoubi envelope is the closure for absolute limits. Our construction replaces the idempotents in the ordinary version with a notion that we call ``condensations.'' The name is justified by the direct physical interpretation of the notion of condensation: it encodes a general class of constructions which produce a new topological phase of matter by turning on a commuting projector Hamiltonian on a lattice of defects within a different topological phase (for example, the trivial phase). We also identify our higher Karoubi envelopes with categories of fully-dualizable objects. Together with the Cobordism Hypothesis, we argue that this realizes an equivalence between a very broad class of gapped topological phases of matter and fully extended topological field theories, in any number of dimensions. 
\end{abstract}
\maketitle

\tableofcontents

\section{Introduction}

Karoubi aka idempotent completion, which adjoins to a category images for all idempotent morphisms, is a vital tool for the construction and analysis of interesting categories. This paper describes the corresponding construction for weak $n$-categories. Our motivation comes from physics: we find that ``condensation'' of gapped topological quantum matter systems is precisely axiomatized by forming the image of a higher-categorical idempotent.
 As such, we will use the name \define{condensation} for our higher-categorical analog of forming images of idempotents, and \define{condensation monad} for the higher-categorical idempotents themselves,  whether the $n$-category under consideration is or is not of physical origin. Section~\ref{intro.gappedvstqft} explains in more detail the physics motivating our construction, and \S\ref{intro.layering} continues the physics story, building up to our inductive definition of \define{(categorical) condensation}. We slowly migrate to more mathematical language in \S\ref{subsec.intro-Kar}, where we survey the mathematical results of the paper. The paper itself is pure category theory, with the exception of \S\ref{sec.bc} on lattice Hamiltonians and \S\ref{subsec.stateoperatorcorresp} on the state-operator correspondence. A reader who is allergic to physics could in principle begin in Section~\ref{sec.Kar}, although they would miss some important ideas and motivation.

\begin{remark}\label{rem:modelindependent}
  Weak $n$-categories remain under active development, and basic results about the equivalence between different models have not been entirely established. Rather than work in any specific model, our main definitions and constructions are so robust that they are easily implemented in any model of weak $n$-categories. Our proofs of Theorem~\ref{thm.absolute} and Corollary~\ref{cor.dualizable}, which are the strongest and most important results in this paper, require  assumptions about weak $n$-categories, and specifically about colimits therein, that we could not locate in the literature. These assumptions are described in \S\ref{subsec.abcolim}.
  
  Because we do not implement our results in any specific model, although we will write in terms of Theorems and Proofs, our results should be understood as having the mathematical status of detailed sketches. Our goal is to explain the general higher dimensional picture, and we invite others to continue to develop the story. For example, Markus Zetto is currently investigating higher Karoubi completion in the general setting of enriched higher categories, and our understanding is that he will provide a fully-implemented version.
  
  More strongly, we suggest our results as a desideratum for good models of weak $n$-categories: a proposed model for which our constructions and proofs do not compile should be rejected as not capturing what end-users mean by ``weak $n$-category.''
  Experts will moreover recognize that our notion of condensation makes sense, with only slight modification, not just in the world of weak $n$-categories, but also in the world of $(\infty,n)$-categories. We did not attempt to pursue that generality: instead, we invite those experts to build on our results. 
  \end{remark}

\subsection{Gapped phases of matter vs extended TQFTs}\label{intro.gappedvstqft}

In recent years, the study of ``topological'' gapped phases of matter \cite{ChenGuWen,KitaevTalk3} has considerably expanded the physical range of applicability of the 
tools of Topological Quantum Field Theory.

A quantum system is \define{gapped} when the creation of excitations above the vacuum costs a non-zero amount of energy. Almost by definition, TQFTs provide a low-energy effective description of gapped Quantum Field Theories defined in the continuum. 
For comparison,  it is far from obvious that TQFTs should provide 
the correct language to describe the long-distance, low-energy 
behaviour of generic gapped condensed matter systems, which are typically defined as some local lattice Hamiltonian and do not have built-in relativistic invariance. 
Nevertheless, it is an empirical fact that many gapped condensed matter systems which  admit a robust long-distance/low-energy limit are  well-described by TQFTs.

\begin{remark}
It is certainly possible to find gapped condensed matter systems which do  not admit a TQFT description --- there are gapped condensed matter systems in which global properties  depend sensitively on small local changes of the system \cite{Vijay:2015mka}.  
\end{remark}

This relationship between gapped condensed matter systems and TQFTs is perplexing, particularly so if 
one takes a ``global'' approach to TQFTs, defining them a la \cite{At} in terms of 
partition functions attached to non-trivial Euclidean space-time manifolds and spaces of states attached to 
non-trivial space manifolds. From that perspective, matching a given lattice system to a TQFT would require 
identifying a lot of extra structure to be added to the definition of the lattice system in order to define it on discretizations of 
non-trivial space manifolds and to define adiabatic evolutions analogous to non-trivial space-time manifolds.

\begin{remark}If one takes the information-theoretic perspective that a phase of matter is fully characterized by the local entanglement properties of the 
ground-state wavefunction of the system \cite{Kitaev:2005dm,2006PhRvL..96k0405L}, with no reference to a time evolution, then even more work may be needed.\end{remark}

One should then worry that this extra structure may hide choices which would complicate the correspondence 
between gapped phases and TQFTs. Conversely, given a TQFT we may or not be able to provide a microscopic lattice realization which ``discretizes'' it. There could be topological field theories, or field theories in general, 
which are well defined and yet cannot be put on a lattice, or can be put on a lattice in different, inequivalent ways. 
The notion of ``phase'' in condensed matter system is quite strict and multiple distinct phases could potentially
look the same through the lens of topological field theory. 

These concerns are not just abstract. Given some phase of matter, in the lab or in a computer,
it is hard to extract the data which would pin down the corresponding TQFT, or even know if the TQFT exists. 
For example, we can hardly place a three-dimensional material on a non-trivial space-manifold. We can only try to simulate that 
by employing judicious collections of defects in flat space. 

%\subsection{Gapped phases of matter vs extended TQFTs}\label{intro.gappedvsetqft}

Some of the tension is ameliorated by a more modern perspective on TQFT, which stresses the local properties of the theory, encoded in  local defects of all possible codimension. The so-called (Fully) Extended TQFTs are defined by an intricate 
system of mathematical data attached to an open ball. The ``Cobordism Hypothesis'' 
of \cite{BaeDol95,Lur09}
guarantees that these data 
can be used to systematically assemble partition functions, Hilbert spaces, etc., for complicated manifolds 
out of much simpler geometric building blocks. 
In particular, in order to match some physical system to a fully extended TQFT we would not need to consider 
nontrivial spacetime geometries. We would instead have to study a sufficiently rich collection of local modifications 
of the physical system.

The Cobordism Hypothesis is essentially a statement about topology. It gives the tools to reconstruct something which behaves as a  TQFT from a ``fully dualizable'' object in some higher category. It leaves open the problem of identifying 
the correct choice of higher category to produce a given class of physical TQFTs:
\begin{itemize}
\item Which higher category classifies extended TQFTs associated to $d$-dimensional gapped lattice systems built from bosonic degrees of freedom? \item What about $d$-dimensional gapped lattice systems built from fermionic degrees of freedom?
\item What about systems enhanced by $G$-symmetry?
\item Etc.
\end{itemize}

Furthermore, the details of how a higher category is presented matter: they control which local questions one would need to ask about a physical system in order to reconstruct the corresponding extended TQFT.  From this ``practical'' perspective, the current state of the field is somewhat dismal. 
Higher categories with interesting fully dualizable objects are only available in low dimension and the 
presentation of the categories is not well suited to a comparison to phases of matter. 
Typical $d$-categories are presented in terms of local operators: the objects are by definition the ``theories'' defined in $d$ space-time dimensions, morphisms are by definition the codimension-$1$ defects between theories, 2-morphisms are codimension-$2$ defects, etc. Hilbert spaces of states are a derivative notion, built essentially by the state-operator map of 
TQFTs. Full dualizability is roughly a requirement for the state-operator map to be well-defined. 

In order to make contact with condensed matter constructions, a better approach would be if all ingredients were cast directly in terms of Hilbert spaces of low-energy 
states. Notions like ``local operator'' and ``defect'' could then be derived in such a way that the state-operator map and full dualizability 
will hold automatically. This is the physical objective in this paper. Specifically, our goal is to define some higher categories in which:
\begin{enumerate}
\item Full dualizability is automatic. \label{goal1}
\item Commuting-projector lattice descriptions are built in. \label{goal2}
\item One can algorithmically answer the question ``Is such-and-such gapped system described by an extended TQFT valued in this higher category?''
\end{enumerate}

\subsection{Layering and condensation} \label{intro.layering}

We will construct our higher categories recursively, building the category of $(d+1)$-dimensional phases out of the category of $d$-dimensional phases. Our construction is a mathematical analogue of the ``coupled layers'' construction \cite{2010PhRvB..81d5120H,2013PhRvB..87w5122W},
which builds a $(d+1)$-dimensional system 
as a stack of $d$-dimensional systems deformed by some local interactions between neighbouring stacks.
Indeed, imagine a stack of many copies of a $d$-dimensional phase $E$. Suppose that we have some way of deforming each consecutive pair of layers $E \otimes E$ into a single copy of $E$, together with higher coherence data. (We will need to work out what that data is!) Then at long distances this interacting stack will determine a new, potentially nontrivial, $(d+1)$-dimensional phase. For our higher category of $(d+1)$-dimensional phases, we will take all phases that can be built in such a ``coupled layers'' way, such that the layers themselves come from our already-constructed higher category of $d$-dimensional phases.
Answering goals~\ref{goal1} and~\ref{goal2} listed above, our Theorem~\ref{thm.fullrigidity} will imply that all objects in our categories are fully dualizable, and so determine extended TQFTs, and Theorem~\ref{thm.Hamiltonian} will construct from each object a commuting projector Hamiltonian lattice system corresponding to the TQFT. 
We expect (but do not prove) the converse:
\begin{conjecture}
  Every commuting projector Hamiltonian lattice system with a TQFT low-energy description arises from the construction in Theorem~\ref{thm.Hamiltonian}.
\end{conjecture}

The presentation of a system as a lattice with a commuting projector Hamiltonian is a special case of a more general 
setup: consider some $(d+1)$-dimensional phase $X$, decorate it with some mesoscopic lattice of lower-dimensional defects
and turn on a small commuting projector Hamiltonian on top of the Hamiltonian used to define $X$ and the defects, so that 
at long distances and low energies the resulting system flows to a new $(d+1)$-dimensional phase $Y$. 
The layering construction we have in mind is simply the mesoscopic lattice construction in which the starting phase $X$ is the trivial vacuum phase, and the $d$-dimensional phase $E$ is the defect used to decorate the $d$-dimensional walls of the lattice.

A mesoscopic lattice construction starting with a phase $X$ and producing a phase $Y$ deserves to be called a ``condensation,'' in analogy to the anyon condensation in low dimension \cite{2018ARCMP...9..307B}. Pick some way of ``condensing'' a phase $X$ into a new phase $Y$ (either presented by a mesoscopic lattice or some other way). Imagine flooding a room with phase $X$ and condensing just half of the room to phase $Y$. Then there will be an interface between $X$ and $Y$, and the physics of that interface is determined by the choice of condensation procedure.  Let us say that we have a \define{gapped condensation} when $X$ and $Y$ and also the interface are all gapped topological.

\begin{remark} \label{rem.framingdependence1}
  We will not require any a priori rotational invariance of our physical systems. Microscopic condensed matter models never have continuous rotation symmetry, since they are typically built on lattices; any rotation invariance is emergent only in the low-energy, and must be established by theorem, not fiat. 
  This manifests in the low-energy limit as a possible framing dependence: condensed matter naturally produces framed, not oriented, TQFTs
  
  By the same token, ``the'' interface produced by condensing half of the room could very much depend on which half of the room is condensed: even if $X$ and $Y$ have (emergent) rotational invariance, so that it makes sense to rotate an interface between them, we will not assume that the interface produced by condensation is rotationally-invariant. Rather, for a gapped condensation, the interface produced between $X$ and $Y$ should be gapped regardless of its angle.
\end{remark}

Now imagine that we have taken our $X$-filled room and condensed both the left and right halves of the room to phase $Y$, leaving a region of phase $X$ in the middle. Reading along, we see: the ($(d+1)$-dimensional) phase $Y$, a ($d$-dimensional) interface ``$g$'' from $Y$ to $X$, phase $X$, an interface ``$f$'' from $X$ to $Y$, and finally more phase $Y$. Assuming these phases and interfaces truly are gapped topological, we can imagine ``composing'' the interfaces $f$ and $g$ into a single ($d$-dimensional) interface $f\circ g$ from $Y$ to itself. 

\begin{remark}
This physical discussion requires important assumptions about the 
gapped systems we are working with: we are assuming that local defects can be defined individually and moved with respect 
to each other without drastic jumps in the properties of the system. In other words, it assumes that there is a meaningful higher category whose objects are $(d+1)$-dimensional gapped topological phases, and whose $k$-morphisms are $(d-k+1)$-dimensional interfaces between $(k-1)$-morphisms. 
 These assumptions require us to work with 
gapped systems and defects which admit a reasonable long-distance/low-energy limit.

Reversing the logic, in \S\ref{sec.bc} we will construct higher categories consisting of (certain) gapped topological phases. These to-be-constructed higher categories will answer the requests from the end of \S\ref{intro.gappedvstqft}.
\end{remark}

Now complete the condensation procedure and convert the whole $X$-filled region to phase $Y$. Equivalently, this effects a gapped condensation of interfaces from $f \circ g$ onto the invisible ``identity'' interface for $Y$. Thus we have found an inductive algebraic characterization of gapped condensation:
\begin{definition}\label{defn.condensation-defect}
  Given two gapped systems $X,Y$, or two local defects in a higher-dimensional system, a \define{categorical condensation of $X$ onto $Y$} is a pair of gapped interfaces $f$ from $X$ to $Y$ and $g$ from $Y$ to $X$, together with a condensation of the composite interface $f \circ g$ from $Y$ to $Y$ onto the trivial interface $\id_Y$ from $Y$ to $Y$.
\end{definition}
The induction implicit in Definition~\ref{defn.condensation-defect} is on the dimension of the phase or interface: categorical condensations between the $(d+1)$-dimensional objects $X$ and $Y$ are defined in terms of categorical condensations between the $d$-dimensional objects $f\circ g$ and $\id_Y$. 
To begin the induction, we declare that identity interfaces are examples of condensations. Theorem~\ref{thm.Hamiltonian} will show that a categorical condensation provides enough information to write down a mesocopic lattice model, so that in fact the passage from a physical gapped condensation to the corresponding categorical condensation involves no loss of information. With this in mind, we will generally omit the word ``categorical,'' and use \define{condensation} to refer to the inductively-defined data of  Definition~\ref{defn.condensation-defect}.

\begin{example}
Suppose a system $X$ is equipped with a nonanomalous action by some finite group $G$, and let $Y$ be obtained by coupling $X$ to dynamical $G$ 
gauge fields, i.e.\ by ``gauging'' the $G$ symmetry of~$X$. Then $X$ condenses onto $Y$ in the sense of Definition~\ref{defn.condensation-defect}. 

Indeed, the interfaces 
$f$ and $g$ can both be taken to be the trivial interface on the $X$-degrees of freedom, combined with a Dirichlet boundary condition for the $G$ gauge fields
(the Dirichlet boundary condition freezes the dynamical $G$-connection to the trivial one).
 A basic property of Dirichlet boundary conditions is that the composition $f \circ g$ automatically carries its own non-anomalous $G$-action, and that gauging that action produces 
  the trivial interface $\id_Y$. This shows, inductively, that $X$ condenses onto $Y$. 
  
  At the bottom of the induction we have a ($(0+1)$-dimensional) anyon $X_1$ and a group homomorphism $G \to \operatorname{Aut}(X_1)$. To gauge this action is simply to take the $G$-fixed sub-anyon $Y_1 \subset X_1$. This sub-anyon is selected by the projector $\frac{1}{|G|} \sum_{g \in G} g \in \End(X_1)$, and comes with inclusion and projection maps $Y_1 \overset{g_1}\mono X_1 \overset{f_1}\epi Y_1$ such that $f_1 \circ g_1 = \id_{Y_1}$. See Example~\ref{eg.anyoncondensation}.
\end{example}

\subsection{Higher Karoubi envelopes}\label{subsec.intro-Kar}

Definition~\ref{defn.condensation-defect} also makes sense in any weak $n$-category: one simply replaces the words ``gapped system''  and ``interface'' with morphisms of appropriate dimension. Specifically, given a weak $n$-category $\cC$, and objects $X,Y \in \cC$, a \define{condensation} of $X$ onto $Y$ is a pair of $1$-morphisms $f:X\to Y$ and $g : Y \to X$ together with a condensation (in the weak $(n-1)$-category $\End_\cC(Y)$) of $f\circ g$ onto $\id_Y$.
In the special case that $\cC$ is a $1$-category, a condensation in~$\cC$ is the same as what is variously called a ``split surjection'' or a ``retract.'' There does not seem to be a standard notation for this well-used notion, and so we propose ``$X \condense Y$,''%
\footnote{\LaTeX\ code: \texttt{%
{\char`\\}mathrel\{\char`\\,{\char`\\}hspace\{.75ex\}{\char`\\}joinrel{\char`\\}rhook{\char`\\}joinrel{\char`\\}hspace\{-.75ex\}{\char`\\}joinrel{\char`\\}rightarrow\}
}}
 with the notation intended to suggest both  $f : X \epi Y$ and $X \hookleftarrow Y : g$.
 One of our main mathematical results, Theorem~\ref{thm.absolute}, says that condensations are the ``correct'' higher-categorical generalization  of split surjections.

In a linear 1-category, a condensation $X \condense Y$ writes $X$ as a direct sum $X \cong Y \oplus \cdots$ --- the direct summand $Y$ is realized as the image of the idempotent $e = g\circ f$. (In a 1-category, an endomorphism $e : X \to X$ is \define{idempotent} if $e^2=e$.)
This is in particular the case if $X$ and $Y$ are anyons, aka point-like defects in space or line-defects in space-time. For higher-dimensional~$X$ and~$Y$, unpacking Definition~\ref{defn.condensation-defect} leads to a collection of morphisms of various dimensions, all the way to the top.

For $X$ and $Y$ of arbitrary dimension, the interface $e := g \circ f$ has several nice properties, generalizing the equation $e^2=e$ when $X$ and $Y$ are anyons. 
We will axiomatize these properties in \S\ref{subsec.condmon} under the name \define{condensation monad}.
For example, we get for free a condensation  $e \circ e \condense e$. It is not hard to argue that one should be able to recover $Y$ by deforming $X$ with a regular array of $e$ 
interfaces, junctions between $e$ and $e \circ e$ interfaces, etc., all the way down to commuting projectors. 
Of particular interest is the case when $X$ is the $(d+1)$-dimensional vacuum/trivial phase. Then $e$ is itself a $d$-dimensional phase, and we have found a construction of the $(d+1)$-dimensional phase~$Y$ as a layered stack of $e$-phases. 
More generally, we will produce, given any condensation monad $e \in \End(X)$, a new system $Y$ built out of a lattice of $e$-defects in $X$.
The construction $e \leadsto Y$ is a higher-categorical version of forming the image of an idempotent. We will call $Y$ the \define{condensate} of the condensation monad~$e$.

%\subsection{Karoubi envelope}\label{subsec.intro-Kar}

In the theory of 1-categories, there is a well-known construction called the ``Karoubi envelope'' which inputs a 1-category $\cC$ and produces a new 1-category $\Kar(\cC)$. The $\Kar(-)$ construction is totally elementary: the objects of $\Kar(\cC)$ are the idempotents in $\cC$; the morphisms are given by similarly-simple diagrams in $\cC$. But it is also powerful: $\Kar(\cC)$ is the universal category which contains $\cC$ and for which every idempotent has an image.
The main mathematical
contribution of this paper is an analogous $\Kar(-)$ construction for weak $n$-categories. Specifically, in Theorem~\ref{thm.Kar} we will produce, given a weak $n$-category $\cC$, a new weak $n$-category $\Kar(\cC)$ whose objects are the condensation monads in $\cC$, which is universal for the property that $\Kar(\cC)$ contains $\cC$ and includes all condensates. In terms of the above physical discussion, if $\cC$ was some collection of gapped phases and the defects between them, then $\Kar(\cC)$ consists of all phases that can be produced from the phases in $\cC$ by condensation.

\begin{remark}
  One of the main results of \cite{Reutter2018} is a version of the Karoubi envelope for linear $2$-categories satisfying some mild finiteness conditions. Although the details of their construction differ from ours (most notably, their analogs of our ``condensations'' and ``condensation monads'' include a unitality condition; see \S\ref{sec.unitalcond}), we will show in Theorem~\ref{thm.reutter} that the outputs agree.
\end{remark}

Another basic categorical construction is to take a monoidal weak $n$-category $(\cC,\otimes)$ and produce a weak $(n+1)$-category with one object~``$\bullet$'' and endomorphisms $\End(\bullet) = \cC$, with composition given by $\otimes$; the standard name for this weak $(n+1)$-category of ``$\rB\cC$.'' If $\cC$ is a collection of $d$-dimensional phases (with $\otimes$ given by the stacking operation of phases), then one should think of the object $\bullet \in \rB\cC$ as the trivial/vacuum $(d+1)$-dimensional phase, and the morphisms in $\rB\cC$ are the phases in $\cC$ thought of as $d$-dimensional defects in the vacuum. 
Then $\Kar(\rB\cC)$ is nothing but the category of all $(d+1)$-dimensional phases which can be produced from layers of phases in $\cC$!

Given its importance, we will denote $\Kar(\rB\cC)$ by the name $\Sigma\cC$.  Iterating the construction produces ever-higher categories $\Sigma^d\cC$.  The main example is:
\begin{example}\label{eg.bosonicgappedphases}
Write $\Vect_\bC$ for the category of finite-dimensional vector spaces over $\bC$; it is well-known that the objects of $\Vect_\bC$ precisely classify bosonic gapped  topological $(0+1)$-dimensional phases.
 The $(d+1)$-category $\Sigma^{d}\Vect_\bC$ then has an interpretation as the collection of all $(d+1)$-dimensional phases which arise from (iterative) condensation. By Theorem~\ref{thm.Hamiltonian}, all phases in $\Sigma^d\Vect_\bC$ admit commuting projector Hamiltonian models. 

By Corollary~\ref{cor.sigmavec}, which encodes a version of the ``state-operator map,'' $\Sigma\Vect_\bC$ is equivalent to the $2$-category of finite-dimensional separable algebras and their finite-dimensional bimodules.
We furthermore expect that $\Sigma^2\Vect_\bC$ is equivalent to the $3$-category of multifusion categories and their (finite semisimple) bimodule categories (see Conjecture~\ref{conj.sigma2vec}; over a general field, we expect to find the 3-category of separable finite tensor categories). These are standard models for categories of $(1+1)$- and $(2+1)$-dimensional phases, respectively. 

Essentially the same statements hold for fermionic phases: one simply replaces $\Vect_\bC$ with the $1$-category $\SVect_\bC$ of finite-dimensional super vector spaces. Similarly, one can replace $\Vect_\bC$ by the category $\cat{Rep}(G)$ of finite-dimensional $G$-modules, for any group $G$, in which case $\Sigma^{n-1}\cat{Rep}(G)$ consists of $G$-symmetry-enriched topological phases.
\end{example}

\begin{remark}
Gapped topological $(2+1)$-dimensional phases that are well-described by multifusion categories are variously called ``Turaev--Viro,'' ``Levin--Wen,'' or ``Barrett--Westbury'' type. There are also gapped topological $(2+1)$-dimensional phases of ``Reshetikhin--Turaev type'' which are not well-described by multifusion categories. 
Together with Conjecture~\ref{conj.sigma2vec}, this illustrates that we do not expect $\Sigma^d\Vect_\bC$ to catch all $(d+1)$-dimensional phases. Rather, it catches those phases that can be built from layering/condensation. Reshetikhin--Turaev TQFTs that are not secretly of Turaev--Viro type cannot by built in this way.
%
%Given a Reshetikhin--Turaev TQFT $\cF$, one may take the fusion category of anyons in $\cF$ and build from it a Turaev--Viro TQFT. The result ends up being isomorphic to $\cF \otimes \bar{\cF}$, where $\bar{\cF}$ denotes the orientation-reversal of $\cF$. In TQFT language, $\cF \otimes \bar{\cF}$ arises as the compactification $\cF[S^0]$ of $\cF$ along a $0$-dimensional sphere. The compactifications $\cF[S^1]$ and $\cF[S^2]$ are, respectively, $(1+1)$- and $(0+1)$-dimensional TQFTs that can also be explicitly realized in terms of condensation. In summary, for $\cF$ an arbitrary $(2+1)$-dimensional TQFT and $d\leq 2$, the compactification $\cF[S^d]$ arises naturally from an object in $\Sigma^{2-d}\Vect_\bC$.
%This motivates:
\end{remark}

%\begin{conjecture}
%  Let $\cF$ be an arbitrary bosonic $(D+1)$-dimensional topological order. For all $d \leq D$, the $((D-d)+1)$-dimensional topological order $\cF[S^d]$ arises from an object in $\Sigma^{D-d}\Vect_\bC$. (As explained in Remark~\ref{rem.framingdependence1}, it is unreasonable to expect topological orders to possess any microscopic rotational invariance. For topological orders, this non-invariance manifests as a possible framing-dependence. The specific $S^d$ along which to compactify is the one with its framing as the boundary of a $(d+1)$-disk.)
%\end{conjecture}
%
%

Completing our answer to the issues raised in \S\ref{intro.gappedvstqft}, we see that the data needed to realize a gapped phase $Y$ (whether a gapped phase of matter or a gapped phase of (T)QFTs) as an object of $\Sigma^{d}\Vect_\bC$ is simply a condensation from the trivial phase to $Y$, which is to say we would need to produce the physical data described above: identify gapped boundary conditions $f$ and $g$, junctions between the interface $f \circ g$ and the identity interface, 
etc. This may be a formidable task, but it is no worse than, say, the problem of identifying 
all the anyons of a $(2+1)$-dimensional system and their fusion and braiding matrices. 

The converse direction, as already promised, is that every object of $\Sigma^{d}\Vect_\bC$ does define a $(d+1)$-dimensional TQFT: in Theorem~\ref{thm.fullrigidity} we will prove that if $\cC$ is symmetric monoidal and \define{fully rigid} in the sense that all objects are fully dualizable, all morphisms have adjoints, etc., then so too is $\Sigma \cC$. 
Indeed, we will prove in Corollary~\ref{cor.dualizable} that, up to equivalence, $\Sigma \cC$ consists precisely of the dualizable $\cC$-modules.
The resulting tight relation between TQFTs and gapped phases of matter will, we hope, help demystify and clarify the connection between the symmetry properties of gapped systems and of the corresponding TQFTs.

\begin{remark}
  The appendix to \cite{BDSPV} collects a ``bestiary'' (their word) of $\bC$-linear $2$-categories each deserving the name ``the $2$-category of $2$-vector spaces,'' and studies dualizability criteria in each one. The main result of that appendix is that, although the subcategories of $1$-dualizable objects vary between the different $2$-categories, the subcategories of fully dualizable objects in all examples are equivalent. This common $2$-category is also equivalent to our $\Sigma \Vect_\bC$. Based on that appendix, Scheimbauer has proposed the following ``Bestiary Hypothesis'': if a $\bC$-linear symmetric monoidal $(d+1)$-category $\cC$ deserves the name ``the $(d+1)$-category of $(d+1)$-vector spaces,'' then its subcategory of fully dualizable objects is equivalent to $\Sigma^{d}\Vect_\bC$.
\end{remark}

In particular, we hope our work may help explain the following phenomenon. 
As explained in Remark~\ref{rem.framingdependence1}, 
a priori, a gapped topological phase of matter may suffer a nontrivial framing dependence. However, in practice bosonic topological orders defined over $\bC$ determine orientable TQFTs and fermionic topological orders determine spin TQFTs. Indeed,  over $\bR$, topological phases tend to determine unoriented TQFTs! 

The cobordism hypothesis description of framing dependence is as follows. The full dualizability proven in Theorem~\ref{thm.fullrigidity} implies that the space of objects of $\Sigma^d\Vect_\bK$, for any field $\bK$, or of its super analog $\Sigma^d\SVect_\bK$, carries a canonical homotopical action by the orthogonal group $\mathrm{O}(d+1)$. To define an unoriented, oriented, or spin TQFT requires finding a homotopy-fixed point for this action, for the induced $\mathrm{SO}(d+1)$-action, or for the induced $\mathrm{Spin}(d+1)$-action, respectively.  

In examples from topological phases, the homotopy-fixed point data is not always quite canonical, but seems to be as ``canonical'' as is the positive-definite metric on a vector space over $\bR$: the space of choices is not trivial, but is contractible with respect to the $\bR$-topology; 
a choice of such metric breaks the symmetry group from $\mathrm{GL}(n,\bR)$ to $\mathrm{O}(n)$, and as Lie groups these are not isomorphic but they homotopy equivalent. This is surely a symptom of the general fact that the notion of ``phase of matter'' is itself one that depends on the topology of $\bR$. Building that topology into $\Sigma^d \Vect_\bR$ has not been done. So although we expect that real=unoriented, bosonic=oriented, and fermionic=spin in a rather ``canonical'' way, we for now suggest:

\begin{conjecture}\label{conj.orientability}
  \begin{enumerate}
    \item Every object of $\Sigma^d \Vect_\bR$ carries a fixed point structure for the homotopy $\mathrm{O}(d+1)$-action.
    \item Every object of $\Sigma^d \Vect_\bC$ carries a fixed point structure for the homotopy $\mathrm{SO}(d+1)$-action.
    \item Every object of $\Sigma^d \SVect_\bC$ carries a fixed point structure for the homotopy $\mathrm{Spin}(d+1)$-action.
  \end{enumerate}
\end{conjecture}

We do not know what to expect over fields other than $\bR$ and $\bC$.
Conjecture~\ref{conj.orientability} is the strongest possible: it is easy to find examples showing that the $\mathrm{O}(d+1)$-action on $\Sigma^d \Vect_\bC$ and the $\mathrm{SO}(d+1)$-action on $\Sigma^d \SVect_\bC$ are  not trivializable. 

\begin{remark}
Conjecture~\ref{conj.orientability} might turn out to be too strong. For example, when $d=2$, we expect $\Sigma^2 \Vect_\bC$ to be equivalent to the $3$-category of multifusion categories (see Conjecture~\ref{conj.sigma2vec}), in which case, as explained in Section~3 of \cite{DSPS}, trivializing the $\mathrm{SO}(2)$- or $\mathrm{SO}(3)$-action is closely related to equipping every multifusion category with a pivotal or spherical structure, and this is a long-open conjecture \cite[Conjecture 2.8]{MR2183279}.
\end{remark}

\begin{remark}
  Our Karoubi completion $\cC \mapsto \Kar(\cC)$ is presumably closely related to the ``orbifold completion'' $\cC \mapsto \cC_{\mathrm{orb}}$ of \cite{CR12,Carqueville:2017ono,CRS17,CRS18}.
  (We did not find an analogue in that work of our Definition~\ref{defn.condensation-defect}.)
   One main difference is that 
  their construction builds in trivializations of the homotopy $\mathrm{SO}(d+1)$-action from the beginning. For example, whereas our $\Sigma \Vect_\bC = \Kar(\rB\Vect_\bC)$ has as its objects special Frobenius algebras (see Example~\ref{eg.specialfrob}), their $(\rB\Vect_\bC)_{\mathrm{orb}}$ has as its objects the \emph{symmetric} special Frobenius algebras. There is a physical reason for this difference: the authors of \cite{CR12,Carqueville:2017ono,CRS17,CRS18} are most interested in Euclidean-signature TQFTs, and so rotation-invariance is natural for them. We discovered our results by asking condensed matter questions. Microscopic condensed matter models never have continuous rotation symmetry, since they are typically built on lattices; any rotation invariance is emergent only in the low-energy. Conjecture~\ref{conj.orientability} notwithstanding, 
  there is no a priori reason for a gapped topological system to produce a true topological field theory as its low-energy effective approximation: a priori, a ``topological'' low-energy effective theory for a gapped topological system can have a nontrivial framing dependence. 
  
  Another main difference is in the packaging: we try wherever possible to hide combinatorial complexity inside inductive constructions, whereas their orbifold completion ``expands out'' such presentations, asking for example for coherent data indexed by all possible $n$-dimensional foams. It would be interesting and useful to work out the precise relationship between our constructions and theirs.
  
  Finally, while preparing this paper, we become aware of closely related work in progress by S.~Morrison and K.~Walker. We believe that their work is essentially equivalent to the orbifold completion of \cite{CR12,Carqueville:2017ono,CRS17,CRS18}, although we have not seen the details. The largest difference is that Morson and Walker work with their version of ``disk-like'' $n$-categories \cite{MR2806651}. Disk-like $n$-categories have dualizability and rotation-invariance built in, and so are particularly well-suited for Euclidean-signature TQFT, but not for condensed-matter situations with nontrivial framing dependence. 
  A side effect of whether the $n$-categories have rotation-invariance built in is that our notion of ``condensation'' from Definition~\ref{defn.condensation-defect} is not the most natural one in the Morrison--Walker setting; more natural for them is to make the additional request that the maps $f,g$ in Definition~\ref{defn.condensation-defect} be adjoint to each other in an appropriate sense. 
\end{remark}

\section{Condensations and higher Karoubi envelopes}\label{sec.Kar}

This section presents our $n$-categorical generalization of the Karoubi envelope construction. A weak $n$-category $\cC$ has objects (aka $0$-morphisms), $1$-morphisms between objects, $2$-morphisms between $1$-morphisms, etc., all the way up to $n$-morphisms, so that in particular given two objects $X,Y$, the collection of morphisms $\hom_\cC(X,Y)$ between them forms a weak $(n-1)$-category. We do not allow homotopies between $n$-morphisms --- we will not work with $(\infty,n)$-categories. That said, we expect that versions of our constructions would apply in the $(\infty,n)$-world.

What makes weak $n$-categories ``weak'' is in the composition. In general, given two composable $k$-morphisms $X \overset f \from Y \overset g \from Z$, there is not a uniquely defined compositions ``$f \circ g$,'' but rather a contractible  weak $(n-k)$-category worth of compositions (i.e.\ a weak $(n-k)$-category which is equivalent but not necessarily equal to $\{\mathrm{pt}\}$). By the same token, (higher) associativity and  unitality are imposed merely up to contractible weak higher categories of choices. The $n$-morphisms in a weak $n$-category do compose uniquely and associatively, because a ``contractible $0$-category'' is simply a one-element set.

Morally, the correct definition of \define{weak $n$-category} is that it is an $(\infty,1)$-category enriched in $(\infty,1)$-category of weak $(n-1)$-categories. The induction begins by declaring that a ``weak $0$-category'' is a set. This moral definition replaces the question of defining ``weak $n$-category'' with the question of defining ``enriched $(\infty,1)$-category'' and confirming that the $(\infty,1)$-category thereof inherits nice properties. For example, under some technical conditions on the enrichment (most notably that its selected monoidal structure is the Cartesian product), a good definition of ``enriched $(\infty,1)$-category'' is ``Segal object.'' Unpacking this provides the model of weak $n$-categories due to Tamsamani and Simpson \cite{MR1673923,MR2883823}, and for further details, we recommend the beautiful lecture notes \cite{MR3221292}. We believe that it will be straightforward to implement all of our arguments (with the exception of assumptions about colimits listed in \S\ref{subsec.abcolim}) in the Tamsamani--Simpson or any other model of weak $n$-categories, but, as mentioned already in Remark~\ref{rem:modelindependent}, we will work model-independently. 

We will henceforth generally drop the word ``weak'': all of our ``$n$-categories'' are weak $n$-categories. We will use the words ``isomorphism'' and ``equivalence'' interchangeably, and, following the principle of univalence, we sometimes use an equals sign to indicate the presence of a distinguished isomorphism. This should cause no confusion, since strict equality has no place in the world of higher categories.

\begin{example}
   $0$-categories are sets.  $1$-categories are simply ordinary categories.  $2$-categories are equivalent to bicategories: given $1$-morphisms $X \overset f \from Y \overset g \from Z$ in a (weak!)\ $2$-category, arbitrarily choose an object in the contractible $1$-category of possible compositions to be ``the'' composition; such a choice will not typically provide an associative composition of 1-morphisms, but will be associative up to an associator which will satisfy the pentagon equation.
\end{example}

\subsection{$n$-condensations}

We repeat Definition~\ref{defn.condensation-defect}, emphasizing the category number:

\begin{definition}\label{defn.condensation}
  A \define{$0$-condensation} is an equality between objects of a $0$-category. 
  Suppose that $X$ and $Y$ are objects of a weak $n$-category $\cC$.
  An \define{$n$-condensation} of $X$ onto $Y$, written $X \condense Y$, is a pair of 1-morphisms $f : X \to Y$ and $g : Y \to X$ together with an $(n-1)$-condensation  $fg \condense \id_Y$. 
  A \define{condensation} is an $n$-condensation for some $n$. In diagrams:
  \begin{equation} \tag{\ensuremath\spadesuit} \label{eqn.cond}
\begin{tikzpicture}[baseline=(midpoint)]
  \path (0,0) node (X) {$X$} (0,-2) node (Y) {$Y$} (0,-1) coordinate (midpoint);
  \draw[condensationlarge] (X) -- (Y);
\end{tikzpicture}
\quad = \quad
\begin{tikzpicture}[baseline=(midpoint)]
  \path (0,0) node (X) {$X$} (0,-.35) coordinate (Xe) (0,-2) node (Y) {$Y$} (0,-1) coordinate (midpoint);
  \draw[->] (X) .. controls +(-.75,-.75) and +(-.75,.75) ..  node[auto,swap] {$\scriptstyle f$} (Y);
  \draw[->] (Y) .. controls +(.75,.75) and +(.75,-.75) .. node[auto,swap] {$\scriptstyle g$} (X);
  \path (Y.north) +(.1,0) coordinate (Ytopa) +(-.1,0) coordinate (Ytopb);
  \draw[->] (Ytopa) .. controls +(.6,.6) and +(.5,0) .. (Xe) .. controls +(-.5,0) and +(-.6,.6) .. (Ytopb);
  \draw[doublecondensationlarge] (Xe) -- (Y);
\end{tikzpicture} \end{equation}
\end{definition}

\begin{example}
  A $1$-condensation  $X \condense Y$ is a split surjection aka retract: the $0$-condensation $fg \condense \id_Y$ is just an equation, and it forces $X \overset f \epi Y$ to be an epimorphism and $X \overset g \hookleftarrow Y$ to be a monomorphism. (This is why we have chosen the symbol ``$\condense$'': it is reminiscent of both  a hook-arrow and a double-arrow.)
  
  A $2$-condensation consists of a pair of objects $X,Y$ in a $2$-category, 1-morphisms $f : X \leftrightarrows Y : g$, and the data witnessing $\id_Y$ as a retract of $fg$. This structure does not seem to have been studied previously in the 2-categorical literature, but a special case of it arises naturally, and will be the subject of \S\ref{sec.unitalcond}. Namely, suppose that one is given objects $X,Y$ and an adjoint pair of 1-morphisms $f : X \leftrightarrows Y : g$, say with $f$ the left adjoint and $g$ the right adjoint. Recall that the data of the adjunction $f \dashv g$ consists 2-morphisms $\phi : fg \Rightarrow \id_Y$, called the \define{counit}, and $\epsilon : \id_X \Rightarrow gf$, called the \define{unit};
  these satisfy ``zig-zag equations'' that we will recall in Definition~\ref{defn.unitalcondensation} and again at the start of~\S\ref{subsec.fulldual}.
   The adjunction is called \define{separable} when $\phi$ admits a splitting $\gamma : \id_Y \to fg$ (no condition is imposed on $\epsilon$). Given a separable adjunction, choose the splitting $\gamma$ and forget the unit $\epsilon$. Then $(X,Y,f,g,\phi,\gamma)$ is a 2-condensation $X \condense Y$.
\end{example}
  
  The \define{walking $n$-condensation} (the name follows the general notion of ``walking object'' described for example in \cite{nLab-walking}) is the  $n$-category $\spadesuit_n$ freely generated by an $n$-condensation.
   In other words, an $n$-condensation in $\cC$ is precisely a diagram in $\cC$ of shape $\spadesuit_n$; for any $n$-category $\diamondsuit$, a \define{diagram of shape $\diamondsuit$} is just a functor with domain $\diamondsuit$. 
  Thus $\spadesuit_n$ is the  $n$-category generated by two objects which we will continue to called $X$ and $Y$, two $1$-morphisms $f : X \to Y$ and $g : Y \to X$, two $2$-morphisms $fg \to \id_Y$ and $\id_Y \to fg$, two $3$-morphisms, and so on, ending with two $n$-morphisms and one equation between $n$-morphisms (of the form $\alpha \beta = \id$).
Note that this list was just the list of generators, not the full list of $k$-morphisms in $\spadesuit_n$.

\begin{remark}
Higher categories like $\spadesuit_n$ freely generated by some pasting diagram are sometimes called ``computads'' in the higher category theory literature, or ``$n$-computads'' when one wants to emphasize the category number.  They will exist in any model of $n$-categories: if one cannot present an $n$-category by a pasting diagram, then one's notion of ``$n$-category'' is clearly deficient. Indeed, the ability to present $n$-categories by pasting diagrams is the fundamental axiom used in \cite{BSP2011}.

  Just like our $n$-categories, our $n$-computads are always weak. One must be a bit cautious when working with $n$-computads because of this weakness. Consider, for example, the free $3$-category with a single object $\bullet$, no nonidentity $1$-morphisms, and one generating $2$-morphism $x : \id_\bullet \to \id_\bullet$. Up to isomorphism, the complete list of $2$-morphisms in this category consists of the powers of $x$. But there are many $3$-morphisms, all invertible, parameterizing the ways that the $2$-morphism $x$ can braid around itself. For a related caution, see~\cite{MR3087175}.
\end{remark}

\begin{remark}\label{rem.truncation}
Any $(n+1)$-category $\cC$ has an \define{$n$-localization} $\lambda_{\leq n}\cC$ in which the $(n+1)$-morphisms are forced to be identities; $\lambda_{\leq n}$ is by definition the left adjoint to the inclusion $\cat{Cat}_n \mono \cat{Cat}_{n+1}$. From their presentations, it is clear that $\lambda_{\leq n}\spadesuit_{n+1} = \spadesuit_n$. (As with elsewhere in this paper, the ``$=$'' sign means that there is a canonical equivalence.) In particular, for any $n$-category $\cC$, an $n$-condensation in $\cC$ is the same as an $(n+1)$-condensation in $\cC$-thought-of-as-an-$(n+1)$-category.

Whereas one should expect that all reasonable models of ``weak $n$-category'' will be equivalent for finite $n$, there are at least two truly inequivalent versions of ``weak $\infty$-category''  \cite{134057}.
(It is unfortunate that the term ``$\infty$-category'' has come to mean something with only one dimension of noninvertibility. We will stubbornly continue to call the latter ``$(\infty,1)$-categories,'' so that ``[weak] $\infty$-category'' can mean some model-independent notion of the $n \to \infty$ limit of ``[weak] $n$-category,'' or in other words ``$(\infty,\infty)$-category.'')
 In the \define{inductive} version, a weak $\infty$-category is determined by all the ways to map into it from weak $n$-categories for finite $n$. In the \define{coinductive} version, a weak $\infty$-category is determined by all the ways to map from it to weak $n$-categories for finite $n$. One can test the difference as follows: for coinductive $\infty$-categories, infinite dualizability is the same as invertibility, since these notions become the same after truncating to any finite $n$; for inductive $\infty$-categories, infinite dualizability is weaker than invertibility.

The limit $\spadesuit := \varprojlim_n \spadesuit_n$ along the canonical maps $\spadesuit_{n+1} \to \lambda_{\leq n} \spadesuit_{n+1} = \spadesuit_n$ is most naturally interpreted as a coinductive $\infty$-category. It can also be presented inductively as a certain free $\infty$-category generated by two cells of each dimension. By construction, $\spadesuit$ has the property that if $\cC$ is an $n$-category for $n<\infty$, then the functors from $\spadesuit$ to $\cC$, i.e.\ the diagrams in $\cC$ of shape $\spadesuit$, are the same as those from $\lambda_{\leq n} \spadesuit = \spadesuit_n$. Thus, by talking in terms of $\spadesuit$, we can work with $n$-condensations for all $n$ simultaneously. We will refer to $\spadesuit$ as the \define{walking ($\infty$-)condensation}. We emphasize that this is simply a matter of linguistics: $\spadesuit$ is \emph{defined} by its maps to finite-$n$-categories, which are defined to be $n$-condensations.
\end{remark}

\begin{remark}\label{remark.commutingsquare}
Let $I = \{\bullet\to\bullet\}$ denote the walking arrow, so that the morphisms in $\cC$ are the diagrams of shape~$I$. Then a commuting square in $\cC$ is nothing but a diagram of shape $I^2$. The product of $n$-categories is computed ``levelwise'' in the following sense: the $k$-morphisms in a product are simply the product of $k$-morphisms.
(Indeed, the functor $\cC \mapsto \{\text{$k$-morphisms in }\cC\}$ is representable by the walking $k$-morphism; compare \cite[Definition 3.3]{JFS}.)
 For instance, $I$ has, up to isomorphism, a total of two $0$-morphisms and three $1$-morphisms, including identities, and so $I^2$ has four $0$-morphisms and nine $1$-morphisms, including identities and compositions.

Similarly, we define a \define{commuting condensation square} to be a diagram in $\cC$ of shape $\spadesuit^2$, and more generally a \define{commuting condensation $k$-cube} is a diagram of shape $\spadesuit^k$:
$$
\spadesuit^2 \qquad = \qquad 
\begin{tikzpicture}[scale=2, baseline=(Y1.base)]
  \path (0,0) node (X) {$X$}
     (-1,-1) node (Y1) {$Y_1$}
     (1,-1) node (Y2) {$Y_2$}
     (0,-2) node (Z) {$Z$};
  \draw[->] (X) .. controls +(-.5,-.25) and +(.25,.5) .. node[auto,swap] {$\scriptstyle f_1$} (Y1);
  \draw[->] (Y1) .. controls +(.5,.25) and +(-.25,-.5) .. node[auto,swap] (g1) {$\scriptstyle g_1$} (X);
  \draw[->] (X) .. controls +(.25,-.5) and +(-.5,.25) .. node[auto,swap] (f2) {$\scriptstyle f_2$} (Y2);
  \draw[->] (Y2) .. controls +(-.25,.5) and +(.5,-.25) .. node[auto,swap] {$\scriptstyle g_2$} (X);
  \draw[->] (Y1) .. controls +(.25,-.5) and +(-.5,.25) .. node[auto,swap] {$\scriptstyle p_1$} (Z);
  \draw[->] (Z) .. controls +(-.25,.5) and +(.5,-.25) .. node[auto,swap] (q1) {$\scriptstyle q_1$} (Y1);
  \draw[->] (Y2) .. controls +(-.5,-.25) and +(.25,.5) .. node[auto,swap] (p2) {$\scriptstyle p_2$} (Z);
  \draw[->] (Z) .. controls +(.5,.25) and +(-.25,-.5) .. node[auto,swap] {$\scriptstyle q_2$} (Y2);
  \draw[doublecondensationlarge] (X) -- (Y1); \draw[doublecondensationlarge] (X) -- (Y2); \draw[doublecondensationlarge] (Y1) -- (Z); \draw[doublecondensationlarge] (Y2) -- (Z); 
   \path (0,-1) node {$[\cdots]$};
\end{tikzpicture} 
$$

In particular, in $\spadesuit^2$, the four sides of the square $(X,Y_1,f_1,g_1,\dots)$, $(X,Y_2,f_2,g_2,\dots)$, $(Y_1,Z,p_1,q_1,\dots)$, and $(Y_2,Z,p_2,q_2,\dots)$ are all condensations, and all the subsquares commute, so that, for example, there are isomorphisms $p_1f_1 \cong p_2f_2$, $g_1q_1 \cong g_2q_2$, $f_2g_1 \cong q_2p_1$, and $f_1g_2 \cong q_1 p_2$. It is worth emphasizing that in higher categories, commutativity is data, not property: to give a commuting square, one must give these isomorphisms. This it not the end of the commutativity data: there are also commutativity isomorphisms indexed by products of higher cells in $\spadesuit^2$, indicated by the ``$[\cdots]$.''

A commuting condensation square $(X,Y_1,Y_2,Z,\dots)$  includes a diagonal condensation $X \condense Z$ given by pulling back a diagram $\spadesuit^2 \to \cC$ along the diagonal inclusion $\spadesuit \mono \spadesuit^2$.
\end{remark}

\subsection{Condensation monads}\label{subsec.condmon}

The main result underlying the ordinary Karoubi envelope construction is that a $1$-condensation $(f,g) : X \condense Y$ is uniquely determined by the composition $e := gf \in \hom(X,X)$. This map $e$ is an \define{idempotent} --- $e^2 = e$ --- and a $1$-category is \define{Karoubi complete} if all idempotents factor through (automatically unique) condensations. We must categorify the notion of idempotent.

To do so, we study the composition $e := gf$ in the walking $n$-condensation $\spadesuit_n$. One can think of $e$ either as a 1-morphism $e : X \to X$ or as an object of the monoidal $(n-1)$-category $\End_{\spadesuit_n}(X)$. Up to isomorphism, the objects of the latter are precisely the powers of $e$. Let us write $\phi : fg \Rightarrow \id_Y$ and $\gamma : \id_Y \to fg$ for the 2-morphisms that are part of the condensation $fg \condense \id_Y$. The equation $e^2 = e$ in the $1$-category case categorifies to a condensation
$$  e^2 = (gf)(gf) = g(fg)f \condense g \id_Y f = gf = e$$
in $\End_{\spadesuit_n}(X)$ with ``downward'' arrow $g.  \phi .f $ and ``upward'' arrow $g.  \gamma .f$. Note that the ``equalities'' in the above equation are really applications of the monoidality data in $\End_{\spadesuit_n}(X)$. A condensation includes morphisms in both directions, and in the case of the condensation $e^2 \condense e$, these can be thought of as a binary operation and a binary co-operation on $e$, making $e$ into a sort of (bi)algebra object.

In a monoidal $1$-category $\cD$, a binary operation $\mathrm{mult} : A^2 \to A$ on an object $A \in \cD$ is associative when the square
  $$\begin{tikzpicture}[scale = 1.5,baseline=(Y1.base)]
  \path (0,0) node (X) {$A^{3}$}
     (-1,-1) node (Y1) {$A^{2}$}
     (1,-1) node (Y2) {$A^{2}$}
     (0,-2) node (Z) {$A$};
  \draw[->] (X) -- node[auto,swap] {$\scriptstyle \mathrm{mult}.A$} (Y1);
  \draw[->] (X) -- node[auto] {$\scriptstyle A.\mathrm{mult}$} (Y2);
  \draw[->] (Y1) -- node[auto,swap] {$\scriptstyle \mathrm{mult}$} (Z);
  \draw[->] (Y2) -- node[auto] {$\scriptstyle \mathrm{mult}$} (Z);
\end{tikzpicture} $$
commutes. (As above, we have suppressed the monoidality data of $\cD$ from the diagram.) The binary operation $g.  \phi .f : e^2 \to e$ that we have identified indeed provides such a commuting square, as the two sides of
$$\begin{tikzpicture}[scale = 1.5,baseline=(Y1.base)]
  \path (0,0) node (X) {$e^{3} = gfgfgf$}
     (-1,-1) node (Y1) {$e^{2} = gfgf$}
     (1,-1) node (Y2) {$e^{2} = gfgf$}
     (0,-2) node (Z) {$e$};
  \draw[->] (X) -- node[auto,swap] {$\scriptstyle g. \phi.fgf$} (Y1);
  \draw[->] (X) -- node[auto] {$\scriptstyle gfg.  \phi.f$} (Y2);
  \draw[->] (Y1) -- node[auto,swap] {$\scriptstyle g. \phi .f$} (Z);
  \draw[->] (Y2) -- node[auto] {$\scriptstyle g. \phi.f$} (Z);
\end{tikzpicture} $$
are both equal to $g. \phi.\phi.f$.

In higher categories, commutativity of squares is data. In this case, the data involves the ``interchange'' or ``functoriality'' law for horizontal composition. Associativity of a binary operation $m : A^2 \to A$ also involves higher data, for example a pentagonator that relates various applications of associativity between the available maps $A^4 \to A$. There is a relatively easy way to parameterize these data. Specifically, for each $k \in \bN$, one must give a commuting $k$-dimensional cube with ``top'' vertex $A^{k+1}$, ``bottom'' vertex $A$, and intermediate vertices labeled by intermediate powers of $A$; let's call it the $k$-dimensional \define{associcube}. Most faces of associcubes are lower-dimensional associcubes, perhaps with some whiskering by $A$ (just as in the above $2$-dimensional cube, the top two edges are the $1$-dimensional cube $m : A^2 \to A$ with some whiskering, and the bottom two edges are unwhiskered). Some faces are, modulo associativity data of the ambient category $\cD$, simply identities. For example, one of the six faces of the three-dimensional associcube is filled in by the canonical equivalence $(A.m) \circ (m.A^2) = m.m = (m.A) \circ (A^2.m) : A^4 \to A^2$; depending on one's model for higher monoidal categories, this canonical equivalence involves various interchange/functoriality laws. These ``identity'' faces are usually collapsed, and the well-known associahedra arise as cross-sections of the collapsed associcubes.

Let us return to the endomorphism $e = gf$ in $\spadesuit_n$, and temporarily set $n=2$. In addition to the binary operation $g.  \phi .f$ there was also a co-operation $g.  \gamma .f$ coming from the upward part of $e^2 \condense e$. This co-operation is coassociative for the same reason that the binary operation was associative. Moreover, the following squares commute:
$$\begin{tikzpicture}[scale = 1.5,baseline=(Y1.base)]
  \path (0,0) node (X) {$e^{3} = gfgfgf$}
     (-1,-1) node (Y1) {$e^{2} = gfgf$}
     (1,-1) node (Y2) {$e^{2} = gfgf$}
     (0,-2) node (Z) {$e$};
  \draw[<-] (X) -- node[auto,swap] {$\scriptstyle g. \gamma.fgf$} (Y1);
  \draw[->] (X) -- node[auto] {$\scriptstyle gfg.  \phi.f$} (Y2);
  \draw[->] (Y1) -- node[auto,swap] {$\scriptstyle g. \phi .f$} (Z);
  \draw[<-] (Y2) -- node[auto] {$\scriptstyle g. \gamma.f$} (Z);
\end{tikzpicture},
\qquad
\begin{tikzpicture}[scale = 1.5,baseline=(Y1.base)]
  \path (0,0) node (X) {$e^{3} = gfgfgf$}
     (-1,-1) node (Y1) {$e^{2} = gfgf$}
     (1,-1) node (Y2) {$e^{2} = gfgf$}
     (0,-2) node (Z) {$e$};
  \draw[->] (X) -- node[auto,swap] {$\scriptstyle g. \phi.fgf$} (Y1);
  \draw[<-] (X) -- node[auto] {$\scriptstyle gfg.  \gamma.f$} (Y2);
  \draw[<-] (Y1) -- node[auto,swap] {$\scriptstyle g. \gamma .f$} (Z);
  \draw[->] (Y2) -- node[auto] {$\scriptstyle g. \phi.f$} (Z);
\end{tikzpicture} $$
These precisely say that $e$ is not just a (nonunital) associative algebra and a (noncounital) coassociative coalgebra, but in fact a \define{Frobenius algebra}. That the comultiplication splits the multiplication makes it into a \define{special} Frobenius algebra. We will go into more detail on this case in Example~\ref{eg.specialfrob}.

Together with the specialness, the four Frobenius algebra axioms --- associativity, coassociativity, and the two Frobenius identities --- can be succinctly encoded by saying that there is a commuting condensation square (in the sense of Remark~\ref{remark.commutingsquare}) with the following vertices:
$$ \begin{tikzpicture}[baseline = (Y1.base)]
  \path (0,0) node (X) {$e^{ 3}$}
     (-1,-1) node (Y1) {$e^{ 2}$}
     (1,-1) node (Y2) {$e^{ 2}$}
     (0,-2) node (Z) {$e$}
     ;
  \draw[condensation] (X) -- (Y1);
  \draw[condensation] (X) -- (Y2);
  \draw[condensation] (Y1) -- (Z);
  \draw[condensation] (Y2) -- (Z);
\end{tikzpicture} $$
The lower sides are of course the same condensation $e^2 \condense e$, and the upper sides are the two whiskerings of this condensation by $e$. Now allowing $n$ to be arbitrary, we axiomatize this structure:

\begin{definition}\label{defn.condmonad}
 A \define{condensation algebra} in a monoidal $(n-1)$-category is a sequence of commuting condensation cubes
 $$ e, \qquad  
\begin{tikzpicture}[baseline = (Y1.base)]
  \path (0,0) node (X) {$e^{ 2}$}
     (0,-1) node (Y1) {\phantom{$e$}}
     (0,-2) node (Z) {$e$}
     ;
  \draw[condensation] (X) -- (Z);
\end{tikzpicture}
  ,\qquad
\begin{tikzpicture}[baseline = (Y1.base)]
  \path (0,0) node (X) {$e^{ 3}$}
     (-1,-1) node (Y1) {$e^{ 2}$}
     (1,-1) node (Y2) {$e^{ 2}$}
     (0,-2) node (Z) {$e$}
     ;
  \draw[condensation] (X) -- (Y1);
  \draw[condensation] (X) -- (Y2);
  \draw[condensation] (Y1) -- (Z);
  \draw[condensation] (Y2) -- (Z);
\end{tikzpicture}
,\qquad
 \dots $$
 in which the faces the $k$-dimensional cube are either whiskerings of lower-dimensional cubes or are identities, following exactly the combinatorics of the associacubes recalled earlier.
 
 A \define{condensation monad} in an $n$-category $\cC$ is an object $X \in \cC$ and a condensation algebra in the monoidal $(n-1)$-category $\End_\cC(X)$. This follows the standard use of the term ``monad'' for ``algebra object in an endomorphism category.''
 
 The \define{walking condensation monad} is the $n$-category $\clubsuit_n$ such that diagrams $\clubsuit_n \to \cC$ are precisely condensation monads in $\cC$. In other words, $\clubsuit_n$ can be presented by an object $X \in \clubsuit_n$, a 1-morphism $e : X \to X$, a condensation $e^2 \condense e$, a condensation square, \dots.
\end{definition}

Note that for each $n$, $\clubsuit_n$ is finitely presentable: just like ordinary cubes, condensation cubes of dimension $\gg n$ commute as soon as their faces do. As with the case of $\spadesuit_n$ discussed in Remark~\ref{rem.truncation}, there is a canonical equivalence between the localization $\lambda_{\leq n} \clubsuit_{n+1}$ and $\clubsuit_n$, justifying omitting the category number $n$ from the names ``condensation algebra'' and ``condensation monad.'' Moreover, we may talk about the $\infty$-category $\clubsuit:=\varprojlim \clubsuit_n$. Note that, as with $\spadesuit$, this $\infty$-category is \emph{defined} by the property that for any finite $n$, for any $n$-category $\cC$, the functors $\clubsuit \to \cC$ are precisely the functors $\clubsuit_n \to \cC$.

From the construction, there is a canonical functor $\clubsuit_n \to \spadesuit_n$ taking the sole object $X \in \clubsuit_n$ to the object named $X \in \spadesuit_n$. In an earlier version of this paper, we claimed that $\clubsuit_n$ was the full subcategory of $\spadesuit_n$ on the object $X$, but our justification was insufficient. Instead, we formulate this as a conjecture:

\begin{conjecture}\label{conj.fullsuit}
  $\clubsuit$ is the full subcategory of $\spadesuit$ on the object $X$.
\end{conjecture}

\subsection{Constructing condensates}\label{subsec.uniquenessofcondensates}

Having categorified the notions of split surjection (to condensation) and idempotent (to condensation monad), we may now categorify the property that all idempotents split:

\begin{definition}\label{defn.hasallcondensates}
  The \define{condensate} of a condensation monad $(X, e, \dots) : \clubsuit_n \to \cC$, if it exists, is an extension to a condensation $(X \condense Y) : \spadesuit_n \to \cC$.
  An $n$-category $\cC$ \define{has all condensates} if all hom-$(n-1)$-categories $\hom_\cC(X,Y)$ in $\cC$ have all condensates and furthermore every condensation monad $\clubsuit_n \to \cC$ extends to a condensation $\spadesuit_n \to \cC$.
\end{definition}
To justify Definition~\ref{defn.hasallcondensates} (and in particular the use of the term ``the'' in the first sentence), we will show that condensates are unique if they exists. Note that a priori the condensates of $(X,e,\dots)$ are the objects of an $n$-category $\hom(\spadesuit_n,\cC) \times_{\hom(\clubsuit_n,\cC)} \{\mathrm{pt}\}$, where $\{\mathrm{pt}\}$ denotes the terminal $n$-category, the map $\hom(\spadesuit_n,\cC) \to \hom(\clubsuit_n,\cC)$ is restriction along the canonical inclusion, and $\{\mathrm{pt}\} \to \hom(\clubsuit_n,\cC)$ selects the condensation monad $(X,e,\dots)$.
\begin{theorem} \label{thm.uniqueness}
  Suppose that $\cC$ is an $n$-category such that all hom-$(n-1)$-categories $\hom_\cC(X,Y)$ in $\cC$ have all condensates, and let $(X,e,\dots) : \clubsuit_n \to \cC$ be a condensation monad in $\cC$. Then the $n$-category of condensates $(X,e,\dots)$ is either empty or contractible.
\end{theorem}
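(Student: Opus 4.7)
The plan is to induct on $n$. For the base case $n=1$, this is the classical uniqueness of Karoubi splittings: given two splittings $(Y,f,g)$ and $(Y',f',g')$ of an idempotent $e$ (so that $gf = e = g'f'$ while $fg = \id_Y$ and $f'g' = \id_{Y'}$), the morphisms $\alpha := f'g$ and $\beta := fg'$ are mutually inverse via $\beta\alpha = fg'f'g = feg = fgfg = \id_Y$, and any compatible isomorphism is forced to equal $\alpha$ by the identity $\alpha' = \alpha' \circ fg = (\alpha' f)g = f'g = \alpha$.

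For the inductive step, assume the result for $(n-1)$-categories and let $\mathcal{E} = (Y, f, g, \varphi, \gamma, \ldots)$ and $\mathcal{E}' = (Y', f', g', \varphi', \gamma', \ldots)$ be two extensions of the monad $(X, e, \ldots) : \clubsuit_n \to \cC$. I would again form $\alpha := f'g : Y \to Y'$ and $\beta := fg' : Y' \to Y$. The agreement $gf = e = g'f'$, interpreted at all coherence levels, gives $\beta\alpha = feg = (fg)(fg)$, and composing the condensation $\varphi : fg \condense \id_Y$ with itself produces a canonical condensation $\beta\alpha \condense \id_Y$ in $\End_\cC(Y)$; symmetrically, $\alpha\beta \condense \id_{Y'}$ in $\End_\cC(Y')$. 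More generally, the full coherence data of both extensions, together with their agreement on $(X,e,\ldots)$, assembles into a commuting condensation square $\spadesuit^2 \to \cC$ (in the sense of Remark~\ref{remark.commutingsquare}) with $Y$ and $Y'$ at opposite corners; from this square one reads off both the comparison morphisms and the higher data witnessing their mutual inverseness.

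The main obstacle is promoting the one-sided condensations $\beta\alpha \condense \id_Y$ and $\alpha\beta \condense \id_{Y'}$ into a genuine equivalence $Y \simeq Y'$ — condensations in both directions do not by themselves imply equivalence. The crux is that the higher coherences of the commuting condensation square canonically identify the iterated composites $\alpha\beta, \beta\alpha, \alpha\beta\alpha, \ldots$ with the corresponding prescribed condensates, yielding a tower of data in $\End_\cC(Y)$ matching the structure of a condensation monad whose extension is tautologically known (namely the trivial one onto $\id_Y$); the inductive hypothesis, applied inside $\End_\cC(Y)$ (which has all condensates), then produces the required equivalence $\beta\alpha \simeq \id_Y$ uniquely up to contractible choice. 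Once $Y \simeq Y'$ is fixed, the remaining identifications $f \leftrightarrow f'$, $\varphi \leftrightarrow \varphi'$, and so on all live in hom-$(n-1)$-categories of $\cC$ and are handled level by level by the inductive hypothesis, exhibiting the space of extensions as contractible. A conceptually cleaner alternative, which I would pursue if the explicit coherence bookkeeping becomes unwieldy, is to prove abstractly that the inclusion $\clubsuit_n \hookrightarrow \spadesuit_n$ is ``condensation-absolute'' — i.e.\ that $\spadesuit_n$ is freely built from $\clubsuit_n$ by adjoining condensates — which would reduce the theorem to a universal property of the walking condensation itself.
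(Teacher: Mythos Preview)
Your inductive setup and base case are correct, but the comparison maps $\alpha = f'g$ and $\beta = fg'$ are the wrong ones once $n \geq 2$. As you compute, $\beta\alpha = (fg)^2$, and while $(fg)^2 \condense \id_Y$, there is no reason for $(fg)^2$ to be \emph{isomorphic} to $\id_Y$: having $\id_Y$ as a condensate is a weak condition satisfied by many non-invertible endomorphisms. Your attempt to repair this with the inductive hypothesis does not go through --- that hypothesis says the condensate of a fixed condensation monad is unique, not that a morphism is isomorphic to something it condenses onto. Nor can you naturally produce a commuting condensation square $\spadesuit^2 \to \cC$ from the data at hand, since there is no obvious candidate for the fourth corner.

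The missing idea, which the paper supplies, is the \emph{tensor product of condensation modules}. One first observes that $f,f'$ are right condensation $e$-modules and $g,g'$ are left condensation $e$-modules, then constructs a tensor product $\otimes_e$ by condensing a canonical condensation monad living on the naive composite (this is precisely where the hypothesis that hom-categories have all condensates is used). The correct comparison maps are $f' \otimes_e g$ and $f \otimes_e g'$, and now the computation closes up:
\[
(f \otimes_e g')\circ(f' \otimes_e g) \;\cong\; f \otimes_e (g'f') \otimes_e g \;=\; f \otimes_e e \otimes_e g \;\cong\; f \otimes_e g \;\cong\; \id_Y.
\]
In effect, $\otimes_e$ is exactly the operation that collapses your $(fg)^2$ down to $\id_Y$. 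Your alternative route --- showing that $\clubsuit_n \hookrightarrow \spadesuit_n$ is ``condensation-absolute'' --- is morally on target (the paper later proves that condensates are absolute colimits), but that argument itself rests on the tensor-product machinery, so it does not bypass the gap.
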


Our proof of Theorem~\ref{thm.uniqueness} will occupy much of this section, and will be inductive in $n$.  We briefly outline it here. We will first introduce notions of ``condensation module'' over a condensation algebra $e$ and of ``tensor product'' $\otimes_e$ of condensation modules. These notions are also the main ingredients in our higher Karoubi envelope construction, detailed in Theorem~\ref{thm.Kar}. By design, if $(X,Y,f,g,\dots)$ is a condensation, then $f$ and $g$ will be modules for the condensation algebra $e = gf$. Suppose that $(X,Y_1,f_1,g_1,\dots)$ and $(X,Y_2,f_2,g_2,\dots)$ are two condensations extending the same condensation monad $(X, e,\dots)$. We want a canonical isomorphism $Y_1 \cong Y_2$ compatible with the condensation data. We will complete the proof of Theorem~\ref{thm.uniqueness} by showing that $f_1 \otimes_e g_2: Y_2 \to Y_1$ and $f_2 \otimes_e g_1 : Y_1 \to Y_2$ provide such an isomorphism.

To arrive at the definition of ``condensation module,'' 
we build an $n$-category from two walking condensations $(X_1,Y_1,f_1,g_1,\dots)$ and $(X_2,Y_2,f_2,g_2,\dots)$, together with a morphism $\mu : Y_2 \to Y_1$. Set $m = g_1\mu f_2 : X_2 \to X_1$.
\begin{equation}\label{eqn.bimod}
\begin{tikzpicture}[baseline=(midpoint)]
  \path (0,0) node (X1) {$X_1$} +(0,-.35) coordinate (X1e) +(0,-2) node (Y1) {$Y_1$} +(0,-1) coordinate (midpoint);
  \draw[->] (X1) .. controls +(-.75,-.75) and +(-.75,.75) ..  node[auto] {$\scriptstyle f_1$} (Y1);
  \draw[->] (Y1) .. controls +(.75,.75) and +(.75,-.75) .. node[auto] {$\scriptstyle g_1$} (X1);
  \draw[doublecondensation] (X1e) -- (Y1);
  \path (3,0) node (X2) {$X_2$} +(0,-.35) coordinate (X2e) +(0,-2) node (Y2) {$Y_2$};
  \draw[->] (X2) .. controls +(-.75,-.75) and +(-.75,.75) ..  node[auto] {$\scriptstyle f_2$} (Y2);
  \draw[->] (Y2) .. controls +(.75,.75) and +(.75,-.75) .. node[auto] {$\scriptstyle g_2$} (X2);
  \draw[doublecondensation] (X2e) -- (Y2);
  \draw[->] (Y2) -- node[auto] {$\scriptstyle \mu$} (Y1);
  \path 
    (X2) +(-.5,-.25) coordinate (X2m)
    (Y2) +(-.5,.25) coordinate (Y2m)
    (X1) +(.5,-.25) coordinate (X1m)
    (Y1) +(.5,.25) coordinate (Y1m)
    ;
  \draw[->] (X2m) .. controls +(-.5,-.5) and +(-.5,.5) .. (Y2m) -- node[auto,swap] {$\scriptstyle m$} (Y1m) .. controls +(.5,.5) and +(.5,-.5) .. (X1m);
\end{tikzpicture}
\end{equation}
Let us inspect the subcategory on the objects $X_1$ and $X_2$. In addition to $m$, this category contains a condensation monad $e_1 = g_1 f_1 : X_1 \to X_1$ and a condensation monad $e_2 = g_2 f_2 : X_2 \to X_2$, and the full list of 1-morphisms consists of the powers of $e_1$, the powers of $e_2$, and the morphisms of the form $e_1^i m e_2^j$ for $i,j \geq 0$. 
There are also condensations $e_1 m \condense m$ and $m e_2 \condense m$, and more generally to identify various commuting condensation cubes relating them. 
Following the same logic that led to Definition~\ref{defn.condmonad} gives:

\begin{definition}
  Let $\cC$ be an $n$-category and $(X_1,e_1,\dots)$ and $(X_2,e_2,\dots)$ two condensation monads in $\cC$. A \define{condensation bimodule} between them consists of a $1$-morphism $m : X_2 \to X_1$ and, for each pair $(i,j) \in \bN^2$, an $(i+j)$-dimensional commuting condensation cube whose ``top'' vertex is $e_1^i m e_2^j$ and whose ``bottom'' vertex is $m$, and whose various faces are either whiskerings of lower dimensional cubes (including those that comprise the condensation monad structures on $e_1,e_2$) or identities:
  $$ m, \quad  
\begin{tikzpicture}[baseline = (Y1.base)]
  \path (0,0) node (X) {$e_1m$}
     (0,-1) node (Y1) {\phantom{$e$}}
     (0,-2) node (Z) {$e$}
     ;
  \draw[condensation] (X) -- (Z);
\end{tikzpicture}
  ,\quad
\begin{tikzpicture}[baseline = (Y1.base)]
  \path (0,0) node (X) {$me_2$}
     (0,-1) node (Y1) {\phantom{$e$}}
     (0,-2) node (Z) {$e$}
     ;
  \draw[condensation] (X) -- (Z);
\end{tikzpicture}
  ,\quad
\begin{tikzpicture}[baseline = (Y1.base)]
  \path (0,0) node (X) {$e_1^{ 2}m$}
     (-1,-1) node (Y1) {$e_1m$}
     (1,-1) node (Y2) {$e_1m$}
     (0,-2) node (Z) {$m$}
     ;
  \draw[condensation] (X) -- (Y1);
  \draw[condensation] (X) -- (Y2);
  \draw[condensation] (Y1) -- (Z);
  \draw[condensation] (Y2) -- (Z);
\end{tikzpicture}
,\quad
\begin{tikzpicture}[baseline = (Y1.base)]
  \path (0,0) node (X) {$e_1me_2$}
     (-1,-1) node (Y1) {$e_1m$}
     (1,-1) node (Y2) {$me_2$}
     (0,-2) node (Z) {$m$}
     ;
  \draw[condensation] (X) -- (Y1);
  \draw[condensation] (X) -- (Y2);
  \draw[condensation] (Y1) -- (Z);
  \draw[condensation] (Y2) -- (Z);
\end{tikzpicture}
,\quad
\begin{tikzpicture}[baseline = (Y1.base)]
  \path (0,0) node (X) {$me_2^2$}
     (-1,-1) node (Y1) {$me_2$}
     (1,-1) node (Y2) {$me_2$}
     (0,-2) node (Z) {$m$}
     ;
  \draw[condensation] (X) -- (Y1);
  \draw[condensation] (X) -- (Y2);
  \draw[condensation] (Y1) -- (Z);
  \draw[condensation] (Y2) -- (Z);
\end{tikzpicture}
,\quad
 \dots $$
 The precise combinatorics specifying which faces are whiskerings and which are identities is the same as the combinatorics that axiomatize associative bimodules in terms of commuting cubes.
\end{definition}

As with Conjecture~\ref{conj.fullsuit}, we believe but do not verify that this list of commuting condensation cubes presents the full subcategory of \eqref{eqn.bimod} on the objects $X_1,X_2$.

\begin{remark}
  Given condensation monads $(X_1,e_1,\dots)$ and $(X_2,e_2,\dots)$ in an $n$-category $\cC$, the $e_1$-$e_2$-bimodules are naturally the objects of an $(n-1)$-category whose (higher) morphisms deserve to be called \define{condensation bimodule homomorphisms}. Indeed, there is some $n$-category $\clubsuit\!\!\clubsuit_n$, the ``walking condensation bimodule,'' that parameterizes condensation bimodules, and it contains as a subcategory the disjoint union of two copies of $\clubsuit_n$; a condensation bimodule between $(X_1,e_1,\dots)$ and $(X_2,e_2,\dots)$ is an extension of the diagram $(X_1,e_1,\dots) \sqcup (X_2,e_2,\dots) : \clubsuit_n^{\sqcup 2} \to \cC$ along $\clubsuit_n^{\sqcup 2}\hookrightarrow \clubsuit\!\!\clubsuit_n$, and a condensation bimodule homomorphism is a natural transformation of functors with domain $\clubsuit\!\!\clubsuit_n$ which restricts as the identity along $\clubsuit_n^{\sqcup 2} \hookrightarrow \clubsuit\!\!\clubsuit_n$. For fixed domain, the collection of functors and natural transformations into an $n$-category $\cC$ is naturally another $n$-category. In the case at hand, it drops to an $(n-1)$-category because the inclusion $\clubsuit_n^{\sqcup 2} \hookrightarrow \clubsuit\!\!\clubsuit_n$ is essentially surjective.
\end{remark}

\begin{example}
For any object $X' \in \cC$, there is an identity condensation monad $(X', \id_{X'},\dots)$. A \define{left module} for a condensation monad $(X,e,\dots)$ is a bimodule between $(X,e,\dots)$ and $(X', \id_{X'},\dots)$ for some object $X'$. Similarly,  a \define{right module} is a bimodule between $\id_{X'}$ and $e$. One can axiomatize (left, say) $e$-modules directly by studying the full subcategory on the objects $\{X,X'\}$ of the category pictured here:
\begin{equation*} \label{eqn.leftmod}
\begin{tikzpicture}[baseline=(midpoint)]
  \path (0,0) node (X1) {$X$} +(0,-.35) coordinate (X1e) +(0,-2) node (Y1) {$Y$} +(0,-1) coordinate (midpoint);
  \draw[->] (X1) .. controls +(-.75,-.75) and +(-.75,.75) ..  node[auto] {$\scriptstyle f$} (Y1);
  \draw[->] (Y1) .. controls +(.75,.75) and +(.75,-.75) .. node[auto] {$\scriptstyle g$} (X1);
  \draw[doublecondensation] (X1e) -- (Y1);
  \path (3,-2) node  (Y2) {$X'$};
  \draw[->] (Y2) -- node[auto] {$\scriptstyle \mu$} (Y1);
  \path 
    (Y2) +(-.5,.25) coordinate (Y2m)
    (X1) +(.5,-.25) coordinate (X1m)
    (Y1) +(.5,.25) coordinate (Y1m)
    ;
  \draw[->]  (Y2m -| Y2.west) -- node[auto,swap] {$\scriptstyle m$} (Y1m) .. controls +(.5,.5) and +(.5,-.5) .. (X1m);
\end{tikzpicture}
\end{equation*}
One finds that a (left) condensation $e$-module consists of a sequence of condensation cubes $e^i m \condense m$.
\end{example}

\begin{example}
  Suppose that $n=1$, so that a condensation monad $(X,e,\dots)$ is an object $X \in \cC$ together with an idempotent $e^2 = e$. Then a  bimodule between $(X_1,e_1)$ and $(X_2,e_2)$ is a morphism $X_1 \overset m \from X_2$ such that $e_1 m = m = m e_2$.
\end{example}

\begin{example}\label{eg.canonicalmod}
  If $(X,Y,f,g,\dots)$ is a condensation, then $f$ and $g$ are respectively right and left modules for $e = gf$. A simple way to see this (for $g$) is to note that in the three-object category pictured in Example~\ref{eqn.leftmod}, composition with $\mu$ produces an isomorphism $\hom(Y,X) \cong \hom(X',X)$.
  
  Each condensation monad $e$ is itself an $e$-$e$-bimodule, simply by letting the requisite $(i,j)$th cube depend only on $i+j$.
\end{example}

\begin{example}\label{eg.ncat-condcomplete}
  Let $\{\mathrm{pt}\} \in \Cat_{n-1}$ denote the $(n-1)$-category with one object and only identity morphisms. Given a condensation monad $(\cX,E,\dots)$ in $\Cat_{n-1}$,  the $(n-1)$-category of $E$-$\{\mathrm{pt}\}$ condensation bimodules consists of functors $\cX \overset X \from \{\mathrm{pt}\}$, which is to say objects $X \in \cX$, equipped with various maps $E^k X \to E^l X$. If $E$ were a unital associative monad rather than a condensation monad, then this category would be precisely the category of what are called in the monad literature ``$E$-algebras in $\cX$.''  The functors ``forget the condensation $E$-module structure'' and ``compose with $E$'' witness this category of $E$-algebras as the condensate of $(\cX,E,\dots)$. In particular, $\Cat_{n-1}$ contains all condensates.
\end{example}

Our next task is to construct a tensor product of condensation bimodules. We will focus for clarity on the case of tensoring a right $(X,e)$-module $X'_1 \overset{m_1}\from X$ with a left $e$-module $X \overset{m_2}\from X'_2$ to produce a morphism $m_1 \otimes_e m_2 : X'_2 \to X'_1$. The bimodule case is no harder, just more notationally cumbersome.

To motivate the construction,  consider the following diagram:
\begin{equation} \label{eqn.tensor1}
\begin{tikzpicture}[baseline=(midpoint)]
  \path (0,0) node (X) {$X$} +(0,-.35) coordinate (Xe) +(0,-2) node (Y) {$Y$} +(0,-1) coordinate (midpoint);
  \draw[->] (X) .. controls +(-.75,-.75) and +(-.75,.75) ..  node[auto] {$\scriptstyle f$} (Y);
  \draw[->] (Y) .. controls +(.75,.75) and +(.75,-.75) .. node[auto] {$\scriptstyle g$} (X);
  \draw[doublecondensation] (Xe) -- (Y);
  \path (3,-2) node  (X2) {$X_2'$};
  \path (-3,-2) node  (X1) {$X_1'$};
  \draw[->] (X2) -- node[auto] {$\scriptstyle \mu_2$} (Y);
  \draw[<-] (X1) -- node[auto,swap] {$\scriptstyle \mu_1$} (Y);
  \path 
    (X2) +(-.5,.25) coordinate (X2m)
    (X1) +(.5,.25) coordinate (X1m)
    (X) +(.5,-.25) coordinate (Xm2)
    (Y) +(.5,.25) coordinate (Ym2)
    (X) +(-.5,-.25) coordinate (Xm1)
    (Y) +(-.5,.25) coordinate (Ym1)
    ;
  \draw[->]  (X2m -| X2.west) -- node[auto,swap] {$\scriptstyle m_2$} (Ym2) .. controls +(.5,.5) and +(.5,-.5) .. (Xm2);
  \draw[<-]  (X1m -| X1.east) -- node[auto] {$\scriptstyle m_1$} (Ym1) .. controls +(-.5,.5) and +(-.5,-.5) .. (Xm1);
\end{tikzpicture}
\end{equation}
There is clearly a condensation $m_1 m_2 \condense \mu_1 \mu_2$, and so there must be a corresponding condensation algebra $\epsilon \in \End(m_1m_2)$. Conversely, assuming by induction that Theorem~\ref{thm.uniqueness} holds for $(n-1)$-categories,
the morphism $\mu_1\mu_2 : X_2' \to X_1'$ is determined by the condensation monad $(m_1m_2, \epsilon,\dots)$.

\begin{proposition}\label{prop.tensorproduct}
  Suppose that $(X,e,\dots)$ is a condensation monad and that $X'_1 \overset{m_1}\from X$ and $X \overset{m_2}\from X_2'$ are right and left $e$-modules, respectively. Then $m_1m_2$ carries a canonical condensation monad~$\epsilon$. In the case pictured in (\ref{eqn.tensor1}), where $e = gf$ for a condensation $X \condense Y$ and $m_1 = \mu_1 f$ and $m_2 = g\mu_2$, the condensation monad $\epsilon \in \End( m_1m_2 )$
  has $\mu_1\mu_2 $ as its condensate.
\end{proposition}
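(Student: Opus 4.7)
The plan is to construct $\epsilon$ from the condensation data latent in the module structures, then verify the condensation monad axioms by pulling back from a walking example.

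First, I extract the underlying $2$-morphism. By Remark~\ref{remark.condbimod}, the right $e$-module structure on $m_1$ includes a condensation $m_1 e \condense m_1$ in the $(n-1)$-category $\hom_\cC(X,X_1')$; let $\rho_1\colon m_1 e \to m_1$ and $\sigma_1\colon m_1 \to m_1 e$ be its two $1$-morphisms. Dually, the left $e$-module structure on $m_2$ supplies a condensation $em_2 \condense m_2$ with $1$-morphisms $\rho_2\colon em_2 \to m_2$ and $\sigma_2\colon m_2 \to em_2$. Whiskering and composing yields the candidate
\begin{equation*}
\epsilon \;:=\; (m_1 \rho_2)\circ(\sigma_1 m_2) \;\colon\; m_1 m_2 \longrightarrow m_1 e m_2 \longrightarrow m_1 m_2.
\end{equation*}

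Second, I organize the higher coherences by a universal calculation in the walking picture. Let $\cW$ denote the $n$-category presenting ``a condensation $X\condense Y$, together with $1$-morphisms $\mu_1\colon Y\to X_1'$ and $\mu_2\colon X_2'\to Y$.'' Inside $\cW$ set $e := gf$, $m_1 := \mu_1 f$, $m_2 := g\mu_2$; by Example~\ref{eg.canonicalmod} these are canonical right and left $e$-modules. Whiskering the condensation $fg \condense \id_Y$ of~(\ref{eqn.cond}) on the left by $\mu_1$ and on the right by $\mu_2$ produces a condensation $m_1 m_2 = \mu_1 fg \mu_2 \condense \mu_1 \mu_2$ in $\cW$. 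The condensation monad associated to this condensation (in the sense of Proposition~\ref{prop.condalg}) has underlying endomorphism precisely the whiskered formula above, and all of its higher commuting-cube data are obtained automatically by whiskering the coherences of $fg \condense \id_Y$ by $\mu_1$ and~$\mu_2$.

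Third, I pull this universal calculation back to $\cC$. Let $\cW_0 \subseteq \cW$ be the full subcategory on the three objects $\{X, X_1', X_2'\}$. By inspection, $\cW_0$ is the walking $n$-category carrying the data of a condensation monad $e$ together with a right $e$-module and a left $e$-module; hence the hypotheses of the Proposition on $\cC$ amount to a functor $\cW_0 \to \cC$. Transporting the condensation monad $\epsilon \in \cW_0$ along this functor yields the desired condensation monad on $m_1 m_2 \in \cC$. The identification in the special case, where $e = gf$ genuinely comes from a condensation in $\cC$, is immediate: the classifying functor $\cW_0 \to \cC$ then factors through $\cW$, and $\epsilon$ is pulled back from the whiskered condensation $m_1 m_2 \condense \mu_1 \mu_2$ itself.

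The main technical hurdle lies in step three: one must verify that $\cW_0$ really is the walking object encoding the combined data of a condensation monad and its left/right modules, with no spurious relations inherited from the ambient $\cW$. I would discharge this inductively, invoking Theorem~\ref{thm.uniqueness} in dimension $n-1$ to match hom-$(n-1)$-categories on both sides; this inductive dependence is consistent with the outline of the proof of Theorem~\ref{thm.uniqueness} announced just before the Proposition.
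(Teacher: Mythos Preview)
Your approach is genuinely different from the paper's. The paper works directly in $\cC$: it whiskers the two module condensations to obtain two condensations $m_1 e m_2 \condense m_1 m_2$, observes that the corresponding condensation monads $\epsilon_1,\epsilon_2 \in \End(m_1 e m_2)$ commute (this is where the $e$-$e$-bimodule structure on $e$ from Example~\ref{eg.canonicalmod} enters), forms their product $\varepsilon$, and then restricts $\varepsilon$ along either condensation $m_1 e m_2 \condense m_1 m_2$ to produce $\epsilon$ on $m_1 m_2$; the two restrictions are then checked to agree. Your route instead computes once in the walking picture $\cW$ and pulls back. Your packaging makes the second assertion of the Proposition (the identification with the whiskered condensation) immediate, while the paper's packaging makes the ``commuting pair of monads'' structure explicit, which is exactly what gets reused in Theorem~\ref{thm.Kar}.

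There is, however, a genuine gap in your step three. The claim that $\cW_0$ is the walking ``condensation monad with a right module and a left module'' is correct, but Theorem~\ref{thm.uniqueness} in dimension $n-1$ is not the tool that proves it: that theorem concerns uniqueness of condensates inside a fixed category and says nothing about comparing two candidate free categories. What is actually needed is a freeness argument parallel to the proof of Proposition~\ref{prop.condalg}: since $\cW$ is freely generated by a condensation together with the loose arrows $\mu_1,\mu_2$, every $1$-morphism in $\hom_{\cW_0}(X_2',X_1')$ is some $m_1 e^k m_2$, and every higher morphism among these is a whisker of the condensation data for $fg\condense\id_Y$, hence is already forced by the module and monad axioms with no further relations. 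Once you supply that argument, your proof goes through; as written, the appeal to Theorem~\ref{thm.uniqueness} does not discharge the hurdle you correctly identified.
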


\begin{definition}\label{defn.tensorproduct}
  Suppose that $\cC$ is an $n$-category all of whose hom $(n-1)$-categories have all condensates, and assume by induction that Theorem~\ref{thm.uniqueness} holds for $(n-1)$-categories.  With notation as in Proposition~\ref{prop.tensorproduct}, the \define{tensor product} $m_1 \otimes_e m_2 : X_2' \to X_1'$ is the condensate corresponding to the condensation monad $(m_1m_2,\epsilon,\dots)$. 
\end{definition}

In particular, in the case pictured in~(\ref{eqn.tensor1}), we have $m_1 \otimes_e m_2 = \mu_1 \mu_2$.

\begin{proof}[Proof of Proposition~\ref{prop.tensorproduct}]
 The actions of $e$ on the $m_i$s give condensations $m_1e \condense m_1$ and $em_2 \condense m_2$.
  Thinking of $e$ as an $e$-module, these condensations are examples of tensor products, providing canonical isomorphisms $m_1 \otimes_e e \cong m_1$ and $e \otimes_e m_2 \cong m_2$.
  
  By whiskering, we find two different condensations $m_1 e m_2 \condense m_1m_2$ --- in terms of tensor products, the two condensations correspond to the isomorphisms $m_1 \otimes_e em_2 \cong m_1 m_2$ and $m_1 e \otimes_e m_2 \cong m_1m_2$.
  
 Let $\epsilon_1,\epsilon_2 \in \End(m_1em_2)$ denote the corresponding condensation monads. The fact that $e$ is bimodule implies that these two condensation monads commute. Their product $\varepsilon \in \End(m_1em_2)$ is a condensation monad; the corresponding condensate, if it exists, should be thought of as ``$m_1 \otimes_e e \otimes_e m_2$.'' Commutativity further implies that we may restrict $\varepsilon$ along either condensation $m_1 e m_2 \condense m_1m_2$ to produce a condensation monad $\epsilon$ on $m_1m_2$ with condensate, if it exists, $m_1\otimes_e m_2$. But in fact the two restrictions agree, because the isomorphisms $m_1 \otimes_e e \cong m_1$ and $e \otimes_e m_2 \cong m_2$ are $e$-module isomorphisms. This $\epsilon$ is our desired condensation monad on $m_1m_2$.
\end{proof}

We may now complete the proof of Theorem~\ref{thm.uniqueness}:

\begin{proof}[Proof of Theorem~\ref{thm.uniqueness}]
  By induction, we may assume the statement of the Theorem for $(n-1)$-categories. Fix a condensation monad $(X,e,\dots) \in \cC$. Suppose that it condenses to two different condensations $(X,Y_1,f_1,g_1,\dots)$ and $(X,Y_2,f_2,g_2,\dots)$.  Then the $f_i$ and $g_i$ are $e$-modules by Example~\ref{eg.canonicalmod}, and so there are morphisms $f_1 \otimes_e g_2 : Y_2 \to Y_1$ and $f_2 \otimes_e g_1 : Y_1 \to Y_2$. Their compositions are
  \begin{gather*}
     f_1 \otimes_e g_2\, f_2 \otimes_e g_1 = f_1 \otimes_e e \otimes_e g_1 \cong f_1 \otimes_e g_1 \cong \id_{Y_1}, \\
     f_2 \otimes_e g_1\, f_1 \otimes_e g_2 = f_2 \otimes_e e \otimes_e g_2 \cong f_2 \otimes_e g_2 \cong \id_{Y_2},
  \end{gather*}
  with the last isomorphism being the special case $\mu_1 = \mu_2 = \id$ of (\ref{eqn.tensor1}).
  Thus the condensations $(X,Y_1,f_1,g_1,\dots)$ and $(X,Y_2,f_2,g_2,\dots)$ are isomorphic.

It remains to show the contractibility of the category of those endomorphisms 
$\tikz[baseline=(X1.base)]{\draw[->] node [text=white](X) {$\bullet$} (X) .. controls +(.75,.25) and +(.75,-.25) .. (X); \path (X) ++(-1.15,0) node (X1) {$(X,Y,f,g,\dots)$} ;}$
that restrict to the trivial endomorphism
$\tikz[baseline=(X1.base)]{\draw[->] node [text=white](X) {$\bullet$} (X) .. controls +(.75,.25) and +(.75,-.25) .. (X); \path (X) ++(-.75,0) node (X1) {$(X,e,\dots)$} ;}$.
 But in fact this follows from what we have already proved. Indeed, an endomorphism of a condensation $\spadesuit \to \cC$ is simply a diagram in $\cC$ of shape $\spadesuit \times \{ \tikz[baseline=(X.base)]\draw[->] node (X) {$\bullet$} (X) .. controls +(.75,.25) and +(.75,-.25) .. (X); \}$, which is the same as a condensation in the category of functors $\{ \tikz[baseline=(X.base)]\draw[->] node (X) {$\bullet$} (X) .. controls +(.75,.25) and +(.75,-.25) .. (X); \} \to \cC$.
Suppose we have a condensation $(X,Y,f,g,\dots)$ with an endomorphism which restricts trivially to $(X,e,\dots)$. Then the corresponding condensation monad in $\operatorname{Fun}(\{ \tikz[baseline=(X.base)]\draw[->] node (X) {$\bullet$} (X) .. controls +(.75,.25) and +(.75,-.25) .. (X); \} , \cC)$ is the identity morphism on the condensation monad $(X,e,\dots)$, and so condenses to the identity on $Y$. But it also condenses to the a priori nontrivial morphism of condensations, and we have already established that each condensation monad has at most one condensate.

That there are no higher morphisms of condensations can be handled similarly, by replacing the test category $\{ \tikz[baseline=(X.base)]\draw[->] node (X) {$\bullet$} (X) .. controls +(.75,.25) and +(.75,-.25) .. (X); \}$ with suitable higher categories.
\end{proof}

The constructions above  also establish:

\begin{theorem}\label{thm.Kar}
  Let $\cC$ be an $n$-category all of whose hom $(n-1)$-categories have all condensates. 
  \begin{itemize}
  \item There is an $n$-category $\Kar(\cC)$, called the \define{Karoubi envelope} of $\cC$, whose objects are the condensation monads in $\cC$, whose $1$-morphisms are the condensation bimodules, and composition of 1-morphisms is given by tensor product of condensation bimodules. The higher morphisms are the homomorphisms of condensation bimodules. 
  \item
  $\Kar(\cC)$ has all condensates.
  \item
  The functor $\cC \to \Kar(\cC)$ sending $X \mapsto (X, \id_X, \dots)$ is fully faithful, and an equivalence if $\cC$ has all condensates.
  \item
  The construction $\cC \mapsto \Kar(\cC)$ is functorial in $\cC$ and takes products of $n$-categories to products.
  \item
  The inclusion $\cC \to \Kar(\cC)$ is universal in the following sense: if $\cD$ is any $n$-category with all condensates, then any functor $\cC \to \cD$ extends uniquely (up to a contractible space of choices) to a functor $\Kar(\cC) \to \cD$.
  \end{itemize}
\end{theorem}

\begin{proof}
  It follows from Proposition~\ref{prop.tensorproduct}
  that compositions in $\Kar(\cC)$ are defined up to a contractible $(n-1)$-category worth of choices (which is the requirement for a weak $n$-category). Associativity (and higher coherences thereof) uses exactly the same argument: a sequence of bimodules $(X_0,e_0) \overset{m_1}\from (X_1,e_1) \overset{m_2}\from \dots \overset{m_k}\from (X_k,e_k)$ determines $(k-1)$-many condensation monads on $m_1m_2\cdots m_k$; that they commute is precisely the bimodule property of the $m_i$s; their unique-up-to-a-contractible-space composition is the unique-up-to-a-contractible-space condensate of the product of these commuting condensation monads. The unit bimodule on $(X,e,\dots)$ is $e$ itself as in Example~\ref{eg.canonicalmod}.
  The inclusion $\cC \to \Kar(\cC)$ sending $X \mapsto (X, \id_X, \dots)$ is a functor and fully faithful simply
  from unpacking the definition of bimodule between $(X,\id_X,\dots)$ and $(X',\id_{X'},\dots)$: such a bimodule is nothing but a morphism $X \from X'$.
  
  Suppose $((X,e,\dots), (E,\dots),\dots)$ is a condensation monad in $\Kar(\cC)$, which is to say that $(E,\dots)$ is a bimodule between $(X,e,\dots)$ and itself, equipped with the data making it into a condensation monad. Then $E : X \to X$ is a condensation monad on $X$, and the extra data is precisely the necessary data needed to make $e$ and $E$ commute. The condensate in $\Kar(\cC)$ of $((X,e,\dots), (E,\dots),\dots)$ is the product condensation monad $(X, eE, \dots)$ in $\cC$.
  If $\cC$ already had all condensates, then the assignment sending each condensation monad to its condensate, which is well-defined (up to contractible choices) by Theorem~\ref{thm.uniqueness}, provides the inverse to the inclusion $\cC \to \Kar(\cC)$.
  
  The second-to-last claim in the Theorem follows from the facts that: $\Kar(\cC)$ is built entirely out of diagrams in $\cC$ (as opposed to, say, colimits in $\cC$); diagrams are preserved by arbitrary functors; for 
  any diagram shape $\diamondsuit$, a $\diamondsuit$-shaped diagram 
  in $\cC_1 \times \cC_2$ is the same as a $\diamondsuit$-shaped diagram in $\cC_1$ together with a $\diamondsuit$-shaped diagram in $\cC_2$.
  
  Given a functor $\cC \to \cD$, functoriality of $\Kar(-)$ extends it to $\Kar(\cC) \to \Kar(\cD)$. But if $\cD$ has all condensates, then $\Kar(\cD) \cong \cD$. This gives the existence part of the last claim in the Theorem. The uniqueness follows from the fact that condensation monads, and condensations, are preserved by all functors, and that condensates are unique when they exist.
\end{proof}

\begin{remark}
If the hom $(n-1)$-categories in $\cC$ did not have all their condensates, then to construct $\Kar(\cC)$ we would first take the Karoubi envelope of all the hom $(n-1)$-categories; without this step, the composition of bimodules might not exist or might not by well-defined up to a contractible space. This local Karoubi completion step is allowed
(in the sense that taking the Karoubi envelope of all hom-$(n-1)$-categories in an $n$-category produces a new $n$-category)
 because of the second-to-last bullet point of Theorem~\ref{thm.Kar}. By induction, for arbitrary $\cC$ with no conditions on existence of condensates, $\Kar(\cC)$ will again by built entirely out of diagrams in $\cC$, but those diagrams will be more complicated. For example, 
if the hom $(n-1)$-categories in $\cC$ do not have all  condensates, then
among the objects of $\Kar(\cC)$ are ``condensation monads'' $(X,e,\dots)$ in $\cC$ where the ``morphism'' $e$ is not just a 1-morphism in $\cC$ but is itself a condensation monad $e = (e_0,\epsilon,\dots)$ in $\End(X)$.
\end{remark}

\begin{example}\label{example.sigmadefn}
  As explained in \S\ref{subsec.intro-Kar}, the application of Theorem~\ref{thm.Kar} that we will be most interested in is the following. Suppose that $\cC$ is a monoidal $(n-1)$-category with all condensates. Write $\rB \cC$ for the corresponding one-object monoidal $n$-category. Then $\rB \cC$ satisfies the conditions of Theorem~\ref{thm.Kar}, and so there is a universal $n$-category $\Kar(\rB \cC)$ with all condensates and a fully faithful map $\rB \cC \to \Kar(\rB \cC)$. The importance of the construction $\Kar(\rB(-))$ justifies giving it a simple name. We will call it $\Sigma(-)$.
  
  The reason for the name ``$\Sigma(-)$'' is because we think of it as a type of ``suspension'' operation. The construction $\rB : \{\text{monoidal $(n-1)$-categories}\} \to \{\text{pointed $n$-categories}\}$ is left-adjoint to the \define{loops} construction that takes a pointed $n$-category $(\cC, 1 \in \cC)$ to $\Omega\cC := \End_\cC(1)$. (An $n$-category is \define{pointed} when it is equipped with a distinguished object.) The universality in the last part of Theorem~\ref{thm.Kar} says that $\Kar(-)$ is the left-adjoint of the inclusion $\{\text{$n$-categories with all condensates}\} \hookrightarrow \{\text{$n$-categories}\}$. Together with the full faithfulness of $\rB\cC \hookrightarrow \Kar(\rB\cC)$, we find that $\Sigma(-)$ is the left-adjoint to $\Omega(-)$ in the world of condensation-complete categories.
  
  Suppose that $\cC$ is not just monoidal but symmetric monoidal. Then $\rB \cC$ is also symmetric monoidal (compare \cite[\S3.3]{MR3924174}) and, since $\Kar(-)$ preserves products, so is $\Kar(\rB \cC) = \Sigma\cC$. This allows us to iterate suspension, defining for example a symmetric monoidal $(d+1)$-category $\Sigma^d \Vect_\bK$ for any field $\bK$.
\end{example}

\subsection{Boundary conditions and lattice Hamiltonians}\label{sec.bc}

When can a gapped topological system $Y$ be reconstructed from a gapped topological system $X$ by ``condensing'' some degrees of freedom?
Let us list some reasonable hypotheses about physical condensation of gapped topological systems. By the end of the list, we will discover the category-theoretical condensations and condensation monads described above.

First, let us require that a physical condensation of $X$ onto $Y$ can be performed in one macroscopic region, independently of  other distant macroscopic regions. Specifically, let us pick a coordinate direction $x$, and suppose that it is possible to condense to phase $Y$ in the domain $x\ll 0$ while remaining in system $X$ for $x \gg 0$. After zooming out to a sufficiently macrosopic length scale, such a partial condensation will produce a codimension-$1$ interface $f$ from $X$ to $Y$. (In order to match composition order for functions, our convention will be that an interface ``from $X$ to $Y$'' is one with $X$ on the right and $Y$ on the left.)
If instead we condensed to phase $Y$ in the $x \gg 0$ region while remaining in phase $X$ for $x \ll 0$, then we would produce an interface $g$ from $Y$ to $X$. We require that whatever this ``partial condensation'' procedure is, the interfaces $f$ and $g$ it produces should be gapped and topological. In particular, they may be translated freely in the $x$-direction without changing their low-energy properties, so long as they remain sufficiently far from any other defect in the system.

Now, for some intermediate length scale $L>0$, consider condensing from $X$ to $Y$ both for $x<0$ and for $x>L$. We should be able to complete the job to obtain a uniform $Y$-phase if we also condense in the intermediate region $0 \leq x \leq L$. The two partial condensations together produce the composition $fg$ of interfaces from $Y$ to $X$ and back to $Y$. Being able to complete the job is simply the ability to condense from the $fg$ interface to the invisible interface $\id_Y$. By induction, we learn:

\begin{lemma}
  Under mild assumptions about the nature of ``physical condensation'' of gapped topological systems, any physical condensation of phase $X$ onto phase $Y$ produces a condensation $X \condense Y$ in the sense of Definitions~\ref{defn.condensation-defect} and~\ref{defn.condensation}. \qed
\end{lemma}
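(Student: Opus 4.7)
The plan is to formalize, by induction on the dimension $d$ of the ambient system, the informal argument that precedes the lemma. Definition~\ref{defn.condensation-defect} (equivalently \ref{defn.condensation}) is itself recursive: a $d$-dimensional condensation of $X$ onto $Y$ consists of a pair of gapped interfaces $f : X \to Y$ and $g : Y \to X$, together with a $(d-1)$-dimensional condensation of $f\circ g$ onto $\id_Y$. So the inductive structure of the proof matches the inductive structure of the definition exactly.

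For the base case $d=0$, a ``$0$-dimensional gapped topological system'' is essentially a finite-dimensional Hilbert space with a distinguished ground state, and a ``physical condensation'' is the inclusion of a direct summand realized by a local projector; this is manifestly a $0$-condensation in the sense of Definition~\ref{defn.condensation} (indeed, it corresponds to the anyon case discussed in the introduction, where condensation recovers an idempotent in $\End(X_1)$). In the inductive step, assume the lemma for all dimensions less than $d$. Given a physical condensation of a $d$-dimensional phase $X$ onto $Y$, pick a coordinate~$x$ and carry out the partial condensation in the half-space $x\ll 0$ to produce a gapped topological interface $f$ from $X$ to $Y$; performing the same procedure in $x\gg 0$ produces the interface $g$ from $Y$ to $X$. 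The ``mild assumptions'' invoked here are precisely that the condensation procedure is local in~$x$ and produces genuinely gapped, topological, translation-invariant codimension-$1$ defects at sufficiently large separation from other defects.

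Next, consider the intermediate region $0 < x < L$: by assumption, the condensation procedure can also be carried out there, completing the interior of the slab to uniform $Y$. Macroscopically, this completion is nothing but a physical condensation of the $(d-1)$-dimensional composite interface $f\circ g$ (viewed as a defect in the ambient $Y$ phase) onto the trivial interface $\id_Y$. By the inductive hypothesis applied in dimension $d-1$, this produces a $(d-1)$-condensation $f\circ g \condense \id_Y$ in the mathematical sense. Packaging $f$, $g$, and this lower-dimensional condensation together yields exactly the data of a $d$-condensation $X \condense Y$ as required by Definition~\ref{defn.condensation}.

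The main obstacle is not any of the inductive bookkeeping but rather the identification and justification of the ``mild assumptions'': one must be sure that (a) partial condensation in a half-space produces a well-defined gapped topological interface independent of the macroscopic shape of the condensing region, (b) these interfaces compose in the same way as morphisms in the relevant higher category, and (c) the procedure for ``finishing the job'' in the slab is itself a legitimate physical condensation one dimension down. These are exactly the physics inputs that the lemma names rather than proves, and a fully rigorous statement would require either axiomatizing gapped topological systems from the outset or restricting to a specific microscopic framework (e.g.\ commuting projector Hamiltonians, as later developed in Theorem~\ref{thm.Hamiltonian}).
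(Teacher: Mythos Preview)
Your proposal is correct and takes essentially the same approach as the paper: the argument is exactly the half-space partial condensation producing $f$ and $g$, followed by completing the slab to reduce to a physical condensation one dimension lower and invoking induction, which is precisely the content of the paragraphs immediately preceding the lemma (hence the \qed). One minor quibble: in your base case you describe an idempotent splitting and call it a $0$-condensation, but Definition~\ref{defn.condensation} defines a $0$-condensation as an equality (the paper's base case is that the trivial interface is a condensation); this indexing slip is harmless for the inductive argument.
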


In order to fully identify physical condensation with Definitions~\ref{defn.condensation-defect} and~\ref{defn.condensation}, we must reverse the construction, explaining how to produce phase $Y$ by condensing some degrees of freedom in phase $X$ if we are given the interfaces $f$ and $g$ and a way of condensing $fg$ onto $\id_Y$.
We will employ a standard condensed matter procedure:  we will enlarge the 
low-energy system of $X$ by proliferating some collection of defects and then adding some new local Hamiltonian on these 
extra degrees of freedom which makes the new system $Y$ gapped and topological. 
 Our method will be a physical version of the categorical discussion from \S\ref{subsec.uniquenessofcondensates} of producing the condensate of a given condensation monad.

To this end, consider the opposite composition $e = gf$. It represents a thin slab of the $Y$ phase embedded within the $X$ phase.
Having started with a categorical condensation $X \condense Y$, this $e$-interface carries a condensation monad structure. 
By induction,
the condensation $e^2 \condense e$
  is precisely the data needed to fill in the space between two $Y$ slabs to produce a single thicker slab, and 
  associativity produces the information how to partially merge collections of consecutive slabs of $Y$ within a bulk of $X$ phase. 

Crucially, 
merely the condensation monad $e$ contains enough information to give a sort of mesoscopic ``topological effective description'' of the original microscopic 
condensation procedure. To obtain $Y$:
\begin{enumerate}
  \item Proliferate the $e$ defects throughout $X$. Imagine these defects as thin slabs of $Y$.
  \item Turn on local Hamiltonians which merge consecutive slabs into a uniform $Y$ phase. \label{step2}
\end{enumerate}
Why can step (\ref{step2}) be performed? For any pair of consecutive slabs, the condensation $e^2 \condense e$ is a condensation of $(d-1)$-dimensional systems, and so by induction has a local Hamiltonian description. But the higher commuting condensation cubes in Definition~\ref{defn.condmonad} means that these local Hamiltonians commute!

\begin{remark}
More precisely, the higher-dimensional commuting condensation cubes provide us with compatible collections of invertible maps which 
can be used to deform the neighbouring networks of defects away from the location where any given local projector will act. The actual projectors are 
defined by conjugating the original local projectors by the maps which move away other spectating defects.
\end{remark}

The induction starts with $(0+1)$-dimensional, aka anyon, condensation, where it produces a projector, aka idempotent, Hamiltonian (compare \S\ref{intro.layering} and Example~\ref{eg.anyoncondensation}).
After unpacking the recursion, we find a network of interfaces and defects in an ambient $X$ phase equipped with a (complicated, but algorithmic) commuting projector Hamiltonian. This network gives a mesoscopic ``topological effective description'' of the phase $Y$. 

The uniqueness proved in Theorem~\ref{thm.uniqueness} is exactly what is needed to show that, if we did start with some generic ``physical condensation'' of $X$ onto $Y$ and then ran the above procedure to first extract a categorical condensation $X \condense Y$ and then to build a mesoscopic lattice, then we would produce a system in the phase $Y$ (and not some other phase). Specifically, Theorem~\ref{thm.uniqueness} produces a canonical invertible defect between the original system $Y$ and the mesoscopic lattice phase.

\begin{remark}\label{remark.physicalbimodule}
  Condensation bimodules can be interpreted similarly.
  Suppose that we have two systems $X_1$ and $X_2$ which can be condensed to $Y_1$ and $Y_2$ respectively, and 
we want to describe some interface $\mu$ interpolating from $Y_1$ to $Y_2$. We get for free an interface $m = g_2 \mu f_1$ from $X_1$ to $X_2$, together with 
condensations which allow one to merge the $m$ interface with slabs $e_1$ and $e_2$ of $Y_1$ or $Y_2$. This data will allow us to reconstruct $\mu$ by a mesoscopic  
condensation procedure, where we proliferate $e_1$ and $e_2$ interfaces on the two sides of $m$ and then turning on local (commuting projector) Hamiltonians 
which merge the consecutive $Y_1$ and $Y_2$ slabs to each other and to $m$. As long as we are given an $m$ with the correct properties, the procedure will 
always go through and give some gapped topological defect $\mu$.
\end{remark}

In the special case when $X$ is in the trivial phase, so that $f$ and $g$ are boundaries for $Y$ and $e$ is actually a phase in one dimension lower, we find a construction of $Y$ from a network of $e$-systems, analogous to the string-net construction of a quantum double model 
in $(2+1)$ dimensions \cite{Levin:2004mi}.  Recall from \S\ref{subsec.intro-Kar} 
and Example~\ref{example.sigmadefn}
the notation $\Sigma\cC = \Kar(\rB \cC)$, where $\cC$ is a symmetric monoidal $n$-category with all condensates, and $\rB\cC$ is the one-object $(n+1)$-category with endomorphism category $\cC$. Note that $\rB\cC$ is symmetric monoidal if $\cC$ is --- see \cite[\S3.3]{MR3924174} --- and hence so too is $\Sigma\cC$ by employing the last sentence of Theorem~\ref{thm.Kar}.
By induction, we find:

\begin{theorem} \label{thm.Hamiltonian}
  Let $\cV$ denote the symmetric monoidal $1$-category of gapped topological $(0+1)$-dimensional systems. For example, for bosonic systems without any symmetry or time reversal enhancement, $\cV$ is equivalent to the category $\Vect_\bC$ of finite-dimensional vector spaces over $\bC$, for fermionic systems $\cV$ is equivalent to the category $\SVect_\bC$ of finite-dimensional supervector spaces, and for $G$-enriched bosonic phases $\cV$ is equivalent to the category $\cat{Rep}(G)$ of finite-dimensional $G$-representations.
  
  Then $\Sigma^d\cV$ is equivalent to the category of $(d+1)$-dimensional gapped topological phases which can be condensed, via gapped topological interfaces, from the vacuum. Furthermore, every object in $\Sigma^d\cV$ (and every morphism, by Remark~\ref{remark.physicalbimodule}) determines a commuting projector Hamiltonian system. That commuting projector Hamiltonian system provides a mesoscopic ``topological effective description'' of the corresponding $(d+1)$-dimensional phase. \qed
\end{theorem}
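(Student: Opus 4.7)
The plan is to prove the theorem by induction on $d$, building on the mesoscopic-lattice discussion in the paragraphs just preceding the statement. The base case $d=0$ is tautological: by definition $\Sigma^0\cV=\cV$ is the category of $(0+1)$-dimensional gapped topological phases, and for any object (e.g.\ a finite-dimensional (super)vector space) the trivial Hamiltonian $H=0$ on the corresponding state space is a commuting projector description.

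For the inductive step, assume the theorem for $d-1$, so that $\Sigma^{d-1}\cV$ is identified with the category of $d$-dimensional condensable gapped topological phases, each carrying an inductively constructed commuting-projector Hamiltonian model. By Theorem~\ref{thm.Kar}, an object of $\Sigma^d\cV=\Kar(\rB\Sigma^{d-1}\cV)$ is a condensation monad in $\rB\Sigma^{d-1}\cV$, i.e.\ (by Definition~\ref{defn.condalg}) a condensation algebra $(e,\dots)$ in $\Sigma^{d-1}\cV$. From such a datum, produce a $(d+1)$-dimensional phase $Y$ by the procedure outlined in \S\ref{sec.bc}: take the $(d+1)$-dimensional vacuum $X$, proliferate the $d$-dimensional defect $e$ on a mesoscopic lattice, and turn on the local Hamiltonians provided by the higher condensations $e^{k}\condense e$ furnished by Proposition~\ref{prop.condalg}. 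By the inductive hypothesis, each of these lower-dimensional condensations is realized by a commuting-projector Hamiltonian, and Proposition~\ref{prop.condalg} together with the discussion in Remark on page preceding Theorem~\ref{thm.Hamiltonian} ensures that the projectors assembled from different merging events commute (after conjugation by the invertible maps that move away spectating defects).

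Conversely, given a $(d+1)$-dimensional gapped topological phase $Y$ condensed from the vacuum $X$ via gapped topological boundary conditions $f,g$, the Lemma preceding Theorem~\ref{thm.Hamiltonian} produces a condensation $X\condense Y$ in the sense of Definitions~\ref{defn.condensation-defect} and~\ref{defn.condensation}. Setting $e=gf$ yields a condensation monad in $\rB\Sigma^{d-1}\cV$, hence an object of $\Sigma^d\cV$, and Theorem~\ref{thm.uniqueness} guarantees that the reconstruction of $Y$ from $e$ by the lattice procedure above is canonically equivalent to the original $Y$. Thus the two assignments are mutually inverse equivalences on objects. For morphisms, apply the same argument to condensation bimodules: by Theorem~\ref{thm.Kar} a morphism in $\Sigma^d\cV$ is a condensation bimodule in $\rB\Sigma^{d-1}\cV$, and Remark~\ref{remark.physicalbimodule} explains how such a bimodule recovers an interface $\mu$ together with a commuting-projector Hamiltonian description obtained by the two-sided merging procedure; uniqueness again follows from Theorem~\ref{thm.uniqueness} (applied in the category of functors out of the walking arrow, as in the last step of that proof).

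The main obstacle is not mathematical but conceptual: the physical side of the correspondence requires accepting the informal axiomatization of ``gapped topological phase'' and ``physical condensation'' set up in \S\ref{sec.bc}, summarized in the Lemma, so that Definitions~\ref{defn.condensation-defect} and~\ref{defn.condensation} really do capture physical condensation. Once that is granted, the theorem reduces to the inductive bookkeeping above, together with the mild check that the lattice construction respects stacking of phases (so that the symmetric monoidal structure on $\Sigma^d\cV$, transported from $\cV$ via the last sentence of Theorem~\ref{thm.Kar} and the monoidal structure on $\rB(-)$, matches the physical stacking of gapped systems).
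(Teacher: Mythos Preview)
Your proposal is correct and follows essentially the same approach as the paper: the theorem carries a \qed\ because its proof is the inductive discussion of \S\ref{sec.bc} that immediately precedes it, and you have accurately reconstructed that argument, citing the Lemma, Proposition~\ref{prop.condalg}, Theorem~\ref{thm.uniqueness}, and Remark~\ref{remark.physicalbimodule} at exactly the points the paper uses them. Your closing paragraph correctly identifies that the residual content is the physical axiomatization rather than any further mathematical step.
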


By ``$(d+1)$-dimensional'' we mean, of course, a system in $d$ spatial dimensions plus $1$ time dimension.
Corollary~\ref{cor.tqft} gives a TQFT analogue.

\begin{example}\label{eg.anyoncondensation}
  Suppose $X$ and $Y$ are $(0+1)$-dimensional objects, aka anyons, possibly embedded into a larger ambient gapped phase $\Phi$. The ``interfaces'' $f$ and $g$ are local operations which can interpolate between the two anyons. 
The condition $fg = \id_Y$ tells us that the composition of the two local operations acts trivially on the anyon $Y$, 
i.e.\ $f$ and $g$ tell us how to embed the anyon $Y$ as a direct summand within $X$. 

  If we are given any Hamiltonian realization of $\Phi$ with a local defect trapping the $X$ anyon at some location, 
then we can simply add a small negative multiple of the projector $e = g f$ to the Hamiltonian in order to get a new Hamiltonian 
whose ground states only include the anyon $Y$ at that location. This is the condensation procedure in $(0+1)$ dimensions.
\end{example}

\begin{example}
In $(1+1)$ dimensions, $X$ and $Y$ will be one-dimensional string defects, possibly embedded into a larger ambient gapped phase $\Phi$. 
The $f$ and $g$ interfaces will be anyons interpolating between $X$ and $Y$ and we require the fusion $f g$ to contain the trivial anyon $\id_Y$ 
as a direct summand. On the other hand, $e$ will be some anyon in $X$, such that we can find $e$ as a direct summand in $e^2$, 
with appropriate associativity constraints identifying a canonical choice of the $e$ summand inside $e^3$ (or any higher power of $e$).
In fact, 
as explained more fully in Example~\ref{eg.specialfrob}, $e$ will be a (nonunital) special Frobenius algebra object in the category of anyons in $X$.

If we have some Hamiltonian realization of $\Phi$, $X$, and $e$, then we can introduce a  mesoscopic lattice of $e$ defects. 
We can produce a new Hamiltonian by adding to the old one some small negative multiple of the projectors 
$e^2 \to e$ for all consecutive pairs of $e$ anyons. This gives an effective Hamiltonian description of the $Y$ system
as the condensate of $e$-anyons in $X$.

This $(1+1)$-dimensional anyon condensation procedure is well known. See e.g. \cite{Kapustin:2010if,CR12}
\end{example}

\begin{example}
  In $(2+1)$ dimensions, $f$ and $g$ will be string defects, with $fg$ being a string which can  condense to the trivial interface $\id_Y$ 
by condensing some anyon supported on the string $fg$. 
The interface $e$ will be some interface in $X$ with the property that the composition $e^2$ can be condensed to $e$ 
by condensing some anyon supported on $e^2$. This anyon can be thought of as a defect at which $e^2$ is fused to $e$ and then back to $e^2$.
Again, there will be various associativity constraints which guarantee that consecutive pairs of interfaces in a long composition $e^n$ 
can be condensed independently of each other. These constraints give invertible operators which interpolate between configurations 
decorated by networks of $e$ defects of various topology and projectors which can be used to eliminate ``bubbles'' from the network.

The construction of an Hamiltonian for $Y$ proceeds as above, starting from a Hamiltonian for $X$ decorated by some 
regular network of $e$ defects and adding small negative multiples of the projectors which eliminate individual faces of the 
network. A version of this construction has been studied in 
\cite{Carqueville:2017ono,CRS18}.
\end{example}

\section{Unitality}

This section is motivated by a mismatch between between our condensation algebras and bimodules, and the algebras and bimodules usually used in topological field theory. Our goal is to show that under mild hypotheses, the two versions  give equivalent results. The mismatch is visible already in two (spacetime) dimensions. (Although our motivation is in category number $n=2$, we will of course phrase as many results as possible for arbitrary $n$.) Two-dimensional TQFTs are usually described in terms of (finite-dimensional) unital associative algebras which enjoy the property of being separable \cite{Schommer-Pries:thesis}. 
As explained in \S\ref{subsec.condmon}, the condensation algebras that make up our 2-category $\Sigma\Vect = \Kar(\rB\Vect)$ are, instead, a type of nonunital noncounital bialgebra. In detail, we have:

\begin{example}\label{eg.specialfrob}
  Let $(\cC,\otimes)$ be a monoidal $1$-category. A condensation algebra in $\cC$, i.e.\ a condensation monad in the one-object $2$-category $\rB\cC$, consists of the following data:
  \begin{itemize}
    \item An object $e \in \cC$.
    \item A ``multiplication'' map $\mathrm{mult}: e \otimes e \to e$.
    \item A ``comultiplication'' map $\mathrm{comult}: e \to e \otimes e$.
  \end{itemize}
  These data are subject to the following axioms:
  \begin{enumerate}
    \item The multiplication and comultiplication are together a split surjection $e^{\otimes 2} \condense e$. \label{item.special}
    \item The multiplication map $e \otimes e \to e$ is associative.   
    \label{item.assoc}
    \item The comultiplication map $e \to e \otimes e$ is coassociative.  
    \label{item.coassoc}
    \item The idempotent $\mathrm{comult} \circ \mathrm{mult} : e^{\otimes 2} \to e \to e^{\otimes 2}$ is equal to both  $(e \otimes \mathrm{mult}) \circ (\mathrm{comult} \otimes e) : e^{\otimes 2} \to e^{\otimes 3} \to e^{\otimes 2}$ and $(\mathrm{mult} \otimes e) \circ (e \otimes \mathrm{comult}) : e^{\otimes 2} \to e^{\otimes 3} \to e^{\otimes 2}$. 
    \label{item.frob}
  \end{enumerate}
  These axioms are precisely the axioms for $e$ to be a \define{nonunital special Frobenius algebra}. Axioms (\ref{item.assoc}), (\ref{item.coassoc}), and (\ref{item.frob}) define ``nonunital Frobenius algebra'' (axiom (\ref{item.frob}) is called the \define{Frobenius axiom}), and axiom (\ref{item.special}) is the definition of ``special.''
  There is a familiar string diagrammatics for bicategories and monoidal categories. Our convention will be to read composition from left to right and from bottom to top. Writing $e = \; 
  \begin{tikzpicture}[baseline=(basepoint)]
    \path(0,2pt) coordinate (basepoint);
    \draw[onearrow] (0,-1pt) -- (0,12pt);
  \end{tikzpicture}\;$ and the multiplication and comultiplication as $\; 
  \begin{tikzpicture}[baseline=(basepoint)]
    \path(0,2pt) coordinate (basepoint);
    \draw[onearrow] (-4pt,-1pt) -- (0,6pt); \draw[onearrow] (4pt,-1pt) -- (0,6pt); \draw[onearrow](0,6pt) -- (0,12pt);
  \end{tikzpicture}\;$ and $
  \; 
  \begin{tikzpicture}[baseline=(basepoint)]
    \path(0,2pt) coordinate (basepoint);
    \draw[onearrow] (0,-1pt) -- (0,5pt); \draw[onearrow] (0,5pt) -- (-4pt,12pt); \draw[onearrow] (0,5pt) -- (4pt,12pt);
  \end{tikzpicture}\;$, the axioms read:
  $$
  \begin{tikzpicture}[baseline=(basepoint)]
    \path (0,1.4) coordinate (basepoint);
    \draw[onearrow] (0,.5) -- (0,1);
    \draw[onearrow] (0,1) .. controls +(.25,.5) and +(.25,-.5) .. (0,2);
    \draw[onearrow] (0,1) .. controls +(-.25,.5) and +(-.25,-.5) .. (0,2);
    \draw[onearrow] (0,2) -- (0,2.5);
  \end{tikzpicture}
  \;=\;
  \begin{tikzpicture}[baseline=(basepoint)]
    \path (0,1.4) coordinate (basepoint);
    \draw[onearrow] (0,.5) -- (0,2.5);
  \end{tikzpicture}
  \;\;, \quad
  \begin{tikzpicture}[baseline=(basepoint),xscale=.75]
    \path (0,1.4) coordinate (basepoint);
    \draw[onearrow] (0,.5) -- (.5,1);
    \draw[onearrow] (1,.5) -- (.5,1);
    \draw[onearrow] (.5,1) -- (1,2);
    \draw[onearrow] (2,.5) -- (1,2);
    \draw[onearrow] (1,2) -- (1,2.5);
  \end{tikzpicture}
   = 
  \begin{tikzpicture}[baseline=(basepoint),xscale=-.75]
    \path (0,1.4) coordinate (basepoint);
    \draw[onearrow] (0,.5) -- (.5,1);
    \draw[onearrow] (1,.5) -- (.5,1);
    \draw[onearrow] (.5,1) -- (1,2);
    \draw[onearrow] (2,.5) -- (1,2);
    \draw[onearrow] (1,2) -- (1,2.5);
  \end{tikzpicture}
  , \quad
  \begin{tikzpicture}[baseline=(basepoint),xscale=.75]
    \path (0,1.4) coordinate (basepoint);
    \draw[onearrow] (1,.5) -- (1,1);
    \draw[onearrow] (1,1) -- (.5,2);
    \draw[onearrow] (.5,2) -- (0,2.5);
    \draw[onearrow] (.5,2) -- (1,2.5);
    \draw[onearrow] (1,1) -- (2,2.5);
  \end{tikzpicture}
   = 
  \begin{tikzpicture}[baseline=(basepoint),xscale=-.75]
    \path (0,1.4) coordinate (basepoint);
    \draw[onearrow] (1,.5) -- (1,1);
    \draw[onearrow] (1,1) -- (.5,2);
    \draw[onearrow] (.5,2) -- (0,2.5);
    \draw[onearrow] (.5,2) -- (1,2.5);
    \draw[onearrow] (1,1) -- (2,2.5);
  \end{tikzpicture}
  , \quad
  \begin{tikzpicture}[baseline=(basepoint)]
   \path (0,.9) coordinate (basepoint);
   \draw[onearrow] (0,0) -- (.5,.75); \draw[onearrow] (1,0) -- (.5,.75); 
   \draw[onearrow] (.5,.75) -- (.5,1.25);
   \draw[onearrow] (.5,1.25) -- (0,2); \draw[onearrow] (.5,1.25) -- (1,2); 
  \end{tikzpicture}
  = \;
  \begin{tikzpicture}[baseline=(basepoint)]
   \path (0,.9) coordinate (basepoint);
   \draw[onearrow] (0,0) -- (0,.5);
   \draw[onearrow] (0,.5) -- (0,2);
   \draw[onearrow] (0,.5) -- (1,1.5);
   \draw[onearrow] (1,0) -- (1,1.5);
   \draw[onearrow] (1,1.5) -- (1,2);
  \end{tikzpicture}
  \; = \;
  \begin{tikzpicture}[baseline=(basepoint),xscale=-1]
   \path (0,.9) coordinate (basepoint);
   \draw[onearrow] (0,0) -- (0,.5);
   \draw[onearrow] (0,.5) -- (0,2);
   \draw[onearrow] (0,.5) -- (1,1.5);
   \draw[onearrow] (1,0) -- (1,1.5);
   \draw[onearrow] (1,1.5) -- (1,2);
  \end{tikzpicture}
  \;.
  $$
  \begin{remark}
  Axioms (\ref{item.assoc}) and (\ref{item.coassoc}) follow from axioms (\ref{item.special}) and (\ref{item.frob}).  Checking this is a fun exercise left to the reader. Hint: Create a bubble using (\ref{item.special}), move it around using (\ref{item.frob}), and then collapse it using~(\ref{item.special}).\end{remark}
  
  A left $e$-module $m$, which we will denote in our string diagrams simply as a solid edge $\; 
  \begin{tikzpicture}[baseline=(basepoint)]
    \path(0,2pt) coordinate (basepoint);
    \draw[ultra thick] (0,-1pt) -- (0,12pt);
  \end{tikzpicture}\;$, comes with a left $e$-action $\; 
  \begin{tikzpicture}[baseline=(basepoint)]
    \path(0,2pt) coordinate (basepoint);
    \draw[onearrow] (-4pt,-1pt) -- (4pt,7pt); \draw[ultra thick] (4pt,-1pt) -- (4pt,12pt);
  \end{tikzpicture}\; : e \otimes m \to m$ and a left $e$-coaction $ \; 
  \begin{tikzpicture}[baseline=(basepoint)]
    \path(0,2pt) coordinate (basepoint);
    \draw[onearrow] (4pt,4pt) -- (-4pt,12pt); \draw[ultra thick] (4pt,-1pt) -- (4pt,12pt);
  \end{tikzpicture}\; :
  m \to e \otimes m$, satifying versions of specialness, (co)associativity, and the Frobenius relation:
  $$ 
  \begin{tikzpicture}[baseline=(basepoint)]
    \path (0,.9) coordinate (basepoint);
    \draw[ultra thick] (0,0) -- (0,2);
    \draw[onearrow] (0,.5) .. controls +(-.5,.5) and +(-.5,-.5) .. (0,1.5);
  \end{tikzpicture}
  \;=\;
  \begin{tikzpicture}[baseline=(basepoint)]
    \path (0,.9) coordinate (basepoint);
    \draw[ultra thick] (0,0) -- (0,2);
  \end{tikzpicture}
  \;, \quad
  \begin{tikzpicture}[baseline=(basepoint)]
    \path (0,.9) coordinate (basepoint);
    \draw[ultra thick] (0,0) -- (0,2);
    \draw[onearrow] (-.5,0) -- (0,.65);
    \draw[onearrow] (-1,0) -- (0,1.3);
  \end{tikzpicture}
  \; = 
  \begin{tikzpicture}[baseline=(basepoint)]
    \path (0,.9) coordinate (basepoint);
    \draw[ultra thick] (0,0) -- (0,2);
    \draw[onearrow] (-.5,0) -- (-.75,.65);
    \draw[onearrow] (-1,0) -- (-.75,.65);
    \draw[onearrow] (-.75,.65) -- (0,1.5);
  \end{tikzpicture}
  \;, \quad
  \begin{tikzpicture}[baseline=(basepoint)]
    \path (0,.9) coordinate (basepoint);
    \draw[ultra thick] (0,0) -- (0,2);
    \draw[onearrow] (0,.7) -- (-1,2);
    \draw[onearrow] (0,1.35) -- (-.5,2);
  \end{tikzpicture}
  \; = 
  \begin{tikzpicture}[baseline=(basepoint)]
    \path (0,.9) coordinate (basepoint);
    \draw[ultra thick] (0,0) -- (0,2);
    \draw[onearrow] (0,.5) -- (-.75, 1.35);
    \draw[onearrow] (-.75,1.35) -- (-.5,2);
    \draw[onearrow] (-.75,1.35) -- (-1,2);
  \end{tikzpicture}
  \;, \quad
  \begin{tikzpicture}[baseline=(basepoint)]
   \path (0,.9) coordinate (basepoint);
   \draw[onearrow] (0,0) -- (1,.75);  
   \draw[onearrow] (1,1.25) -- (0,2); 
   \draw[ultra thick] (1,0) -- (1,2);
  \end{tikzpicture}
  \; = \;
  \begin{tikzpicture}[baseline=(basepoint)]
   \path (0,.9) coordinate (basepoint);
   \draw[onearrow] (0,0) -- (0,.5);
   \draw[onearrow] (0,.5) -- (0,2);
   \draw[onearrow] (0,.5) -- (1,1.5);
   \draw[ultra thick] (1,0) -- (1,2);
  \end{tikzpicture}
  \; = \;
  \begin{tikzpicture}[baseline=(basepoint),xscale=-1]
   \path (0,.9) coordinate (basepoint);
   \draw[ultra thick] (0,0) -- (0,2);
   \draw[onearrow] (0,.5) -- (1,1.5);
   \draw[onearrow] (1,0) -- (1,1.5);
   \draw[onearrow] (1,1.5) -- (1,2);
  \end{tikzpicture}
  \;.
  $$
\end{example}

\begin{remark}\label{remark.special}
  In a monoidal $n$-category with $n>1$, a condensation algebra is again a {special} Frobenius algebra, defined so as to accommodate the higher categoricity:  first, (co)associativity and the Frobenius axioms are imposed only up to coherent homotopy (all the coherence data takes the shape of various associahedra); second, the specialness axiom is modified to the presence of a condensation $\mathrm{mult} \circ \mathrm{comult} \condense \id_e$. The data of this condensation provide the ingredients needed to draw bubbles of dimension $\leq n$, and say that an $n$-dimensional bubble can be created or removed. The (co)associativity and Frobenius axioms allow such a bubble to be moved around freely.
\end{remark}

\subsection{Unital condensations}\label{sec.unitalcond}

Consider a condensation $X \condense Y$ in an $n$-category $\cC$ with $n\geq 2$. The data of this condensation includes: 
\begin{itemize}
\item the objects $X$ and $Y$; 
\item morphisms $f : X \to Y$ and $g : Y \to X$;
\item $2$-morphisms $\phi : fg \Rightarrow \id_Y$ and $\gamma : \id_Y \Rightarrow fg$;
\item $3$- through $n$-morphisms. When $n=2$, these consist just of the equality $\phi\gamma = \id_{\id_Y}$.
\end{itemize}

\begin{definition}\label{defn.unitalcondensation}
  With notation as above, the condensation $X \condense Y$ is \define{unital} if $\phi : fg \Rightarrow \id_Y$ is the counit of an adjunction $f \dashv g$, i.e.\ if there exists a \define{unit}  $\eta : \id_X \Rightarrow e= gf$ such that the ``zig-zag'' compositions
  \begin{gather*}
   f \overset{f \eta}\Longrightarrow fgf \overset{\phi f}\Longrightarrow f, \\
   g \overset{\eta g}\Longrightarrow gfg \overset{g\phi}\Longrightarrow g
  \end{gather*}
  are (equivalent to) identities.
  
  The condensation $X \condense Y$ is \define{counital} if $\gamma : \id_Y \Rightarrow fg$ is the unit of an adjunction $g \dashv f$, i.e.\ if there exists a \define{counit}  $\epsilon : e=gf \Rightarrow \id_X$ such that the ``zig-zag'' compositions
  \begin{gather*}
   f \overset{\gamma f}\Longrightarrow fgf \overset{f \epsilon}\Longrightarrow f,\\
   g \overset{g \gamma}\Longrightarrow gfg \overset{\epsilon f}\Longrightarrow g
  \end{gather*}
  are (equivalent to) identities.
\end{definition}

  We will repeat and elaborate on this (well-known) definition of \define{adjunction} $f \dashv g$ between 1-morphisms in a higher category in \S\ref{subsec.fulldual}. In the special case of $2$-categories, the paper
\cite{Reutter2018}
proposes categorifying the $1$-categorical notion of ``split surjection'' not to our notion of ``condensation'' but rather to the following notion of ``separable adjunction'':
\begin{definition}\label{defn.sepadj}
A \define{separable adjunction} in a $2$-category consists of objects $X$ and $Y$, $1$-morphisms $f : X \to Y$ and $g : Y \to X$, and $2$-morphisms $\phi : fg \Rightarrow \id_Y$ and $\eta : \id_X \Rightarrow gf$ satisfying the first set of conditions from Definition~\ref{defn.unitalcondensation}, such that there exists a $2$-morphism $\gamma : \id_Y \Rightarrow fg$ with $\phi\gamma = \id_{\id_Y}$. More generally, an adjunction $f \dashv g$ in an $n$-category $\cC$ is \define{separable} if the counit of the adjunction $\phi : fg \Rightarrow \id_Y$ is part of a condensation $fg \condense \id_Y$ in $\End_\cC(Y)$.
\end{definition}
\begin{remark}
The data of $\gamma$ is not part of the data of a separable adjunction --- only its existence is required. A choice of $\gamma$ might be called a \define{separation} of the adjunction $f \dashv g$, in which case what we have called a ``unital condensation'' could equivalently be called a \define{separated}, as opposed to separable, adjunction.
\end{remark}

Separable adjunctions are fairly common in nature. However, if a $1$-category $\cC$ is treated as a $2$-category with only identity $2$-morphisms, then separable adjunctions in $\cC$ end up not corresponding to split surjections, but rather to isomorphisms. This is why we prefer condensations over  separable adjunctions as a categorification of ``split surjection.'' Nevertheless, we will show that the two notions often lead to equivalent ``Karoubi envelopes.''

\begin{proposition}\label{prop.unitalcondensation}
  Suppose that $(X,Y,f,g,\phi,\gamma,\dots)$ is a condensation and that $f$ admits a right adjoint $f^R$. Then there is a unital condensation $X \condense Y$ whose $1$-morphisms are $f$ and~$f^R$. If $g$ admits a left adjoint $g^L$, then there is a unital condensation $X \condense Y$ with $1$-morphisms $g^L$ and~$g$. Similarly, if $f$ admits a left adjoint $f^L$, then $f$ and~$f^L$ participate in a counital condensation $X \condense Y$, and if $g$ admits a right adjoint $g^R$, then $g$ and~$g^R$ participate in a counital condensation $X \condense Y$.
\end{proposition}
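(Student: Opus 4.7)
The plan is to prove the first clause; the remaining three follow by running the same argument with sides swapped (part (2) interchanges the roles of $f$ and $g$; parts (3)-(4) interchange units with counits, i.e.\ $\phi$ with $\gamma$). So suppose $f : X \to Y$ admits a right adjoint $f^R$, with unit $\eta : \id_X \Rightarrow f^R f$ and counit $\epsilon : ff^R \Rightarrow \id_Y$ satisfying the triangle identities. I must exhibit a unital condensation $(X, Y, f, f^R, \phi', \gamma', \ldots)$; by the definition of ``unital'' this forces $\phi' = \epsilon$, so the real content is to construct $\gamma' : \id_Y \Rightarrow ff^R$ together with all higher coherence data.

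The construction uses adjoint mating. Let $\tilde\phi := (f^R \phi) \circ (\eta g) : g \Rightarrow f^R$ be the mate of $\phi$, and set $\gamma' := (f\tilde\phi) \circ \gamma : \id_Y \Rightarrow ff^R$. A direct calculation using the interchange law gives
\[ \epsilon \circ (f\tilde\phi) = \epsilon \circ (ff^R \phi) \circ (f\eta g) = \phi \circ (\epsilon fg) \circ (f\eta g) = \phi \circ \bigl(((\epsilon f) \circ (f\eta))g\bigr), \]
and the first triangle identity $(\epsilon f) \circ (f\eta) = \id_f$ collapses this to $\phi$. Whiskering with $\gamma$ yields $\epsilon \circ \gamma' = \phi \circ \gamma$, which is the required section identity at the $2$-morphism level.

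The remaining higher data transports essentially for free. The given $n$-condensation already contains, as part of its recursive data, an $(n-2)$-condensation $\phi\gamma \condense \id_{\id_Y}$ living inside the $(n-1)$-condensation $fg \condense \id_Y$. The identification $\epsilon\gamma' = \phi\gamma$ just established lets me reuse this $(n-2)$-condensation as the recursive piece of the new $(n-1)$-condensation $ff^R \condense \id_Y$. Equivalently and more conceptually, the $2$-morphism $f\tilde\phi : fg \Rightarrow ff^R$ inside $\End_\cC(Y)$, together with the factorization $\epsilon \circ (f\tilde\phi) = \phi$, transports the full $(n-1)$-condensation $fg \condense \id_Y$ to the desired $(n-1)$-condensation $ff^R \condense \id_Y$.

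The main obstacle I anticipate is coherence of this transport at all levels. In a weak $n$-category the triangle identity is itself only an equality up to higher coherent isomorphism, so the claimed equality $\epsilon\gamma' = \phi\gamma$ unwinds into a tower of higher isomorphisms that must be matched against the tower of higher morphisms comprising the recursive condensation data. I expect this to be handled by induction on $n$, appealing to a standard coherence result for adjoint mating in weak $n$-categories (in the spirit of the discussion of full dualizability appearing in \S\ref{subsec.fulldual}); the combinatorial essence of the argument is already visible in the two-morphism calculation above, and the rest is bookkeeping.
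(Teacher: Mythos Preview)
Your proof is correct and essentially identical to the paper's. Your $\gamma' = (f\tilde\phi)\circ\gamma = (ff^R\phi)\circ(f\eta g)\circ\gamma$ is exactly the paper's $\tilde\gamma$, and your interchange-plus-triangle-identity computation of $\epsilon\circ\gamma' = \phi\circ\gamma$ is the same calculation the paper presents as a string diagram; the paper likewise handles the higher coherence by noting that in a $(>2)$-category the equalities are to be read as canonical isomorphisms, matching your closing remark.
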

\begin{proof}
  We will prove the first statement; the others follow by reversing the order of composition of $1$- or $2$-morphisms. Our goal is to construct a condensation $ff^R \condense \id_Y$.
  
  Let $\eta : \id_X \Rightarrow f^R f$ and $\epsilon : f f^R \Rightarrow \id_Y$ denote the unit and counit of the adjunction $f \dashv f^R$. Let $\tilde\gamma$ denote the composition
  $$ \id_Y \overset{\gamma}\Longrightarrow fg \overset{f\eta g}\Longrightarrow f f^R f g \overset{ff^R\phi}\Longrightarrow ff^R. $$
  We claim that $\eta$ and $\tilde\gamma$ participate in a condensation $ff^R \condense \id_Y$. 
  Indeed, the following ``string diagram'' computation (with 1-morphisms drawn from right to left, and 2-morphisms from bottom to top) makes clear that the compositions $\epsilon \circ \tilde \gamma$ and $\phi \circ \gamma$ agree, and so to complete the condensation $ff^R \condense \id_Y$ we may use the condensation $\epsilon \circ \tilde \gamma = \phi \circ \gamma \condense \id_{\id_Y}$.
  $$ 
  \begin{tikzpicture}[baseline=(middle)]
    \path 
      (0,0) node[dot] (gamma) {} node[anchor=north] {$\gamma$}
      (0,1) node[dot] (eta) {} node[anchor=north] {$\eta$}
      (1,2) node[dot] (phi) {} node[anchor=south] {$\phi$}
      (-1,3) node[dot] (epsilon) {} node[anchor=south] {$\epsilon$}
      (0,1.5) coordinate (middle)
    ;
    \draw[onearrow] (gamma.center) .. controls +(1,1) and +(1,-1) .. node[auto,swap] {$g$}  (phi.center);
    \draw[onearrow] (eta.center) .. controls +(.5,0) and +(-.5,-.5) .. node[auto] {$f$} (phi.center);
    \draw[onearrow] (eta.center) .. controls +(-.5,0) and +(.5,0) .. node[auto,swap,pos=.4] {$f^R$} (epsilon.center);
    \draw[onearrow] (gamma.center) .. controls +(-1,1) and +(-.5,0) .. node[auto] {$f$} (epsilon.center);
    \draw[dashed] (-1.75,-.5) rectangle (1.75,2.5); 
    \begin{pgfonlayer}{background}
    \fill[gray!50] 
      (gamma.center) .. controls +(1,1) and +(1,-1) .. (phi.center)
      .. controls +(-.5,-.5) and +(.5,0) .. (eta.center)
      .. controls +(-.5,0) and +(.5,0) .. (epsilon.center)
      .. controls +(-.5,0) and +(-1,1) .. (gamma.center)
    ;
    \end{pgfonlayer}
  \end{tikzpicture}
  =
  \begin{tikzpicture}[baseline=(middle)]
    \path 
      (0,0) node[dot] (gamma) {} node[anchor=north] {$\gamma$}
      (0,1) node[dot] (eta) {} node[anchor=north] {$\eta$}
      (1,3) node[dot] (phi) {} node[anchor=south] {$\phi$}
      (-1,2) node[dot] (epsilon) {} node[anchor=south] {$\epsilon$}
      (0,1.5) coordinate (middle)
    ;
    \draw[onearrow] (gamma.center) .. controls +(1,1) and +(1,-1) .. node[auto,swap] {$g$}  (phi.center);
    \draw[onearrow] (eta.center) .. controls +(.5,0) and +(-.5,-.5) .. node[auto, pos=.6] {$f$} (phi.center);
    \draw[onearrow] (eta.center) .. controls +(-.5,0) and +(.5,0) .. node[auto,swap] {$f^R$} (epsilon.center);
    \draw[onearrow] (gamma.center) .. controls +(-1,1) and +(-.5,0) .. node[auto] {$f$} (epsilon.center);
    \begin{pgfonlayer}{background}
    \fill[gray!50] 
      (gamma.center) .. controls +(1,1) and +(1,-1) .. (phi.center)
      .. controls +(-.5,-.5) and +(.5,0) .. (eta.center)
      .. controls +(-.5,0) and +(.5,0) .. (epsilon.center)
      .. controls +(-.5,0) and +(-1,1) .. (gamma.center)
    ;
    \end{pgfonlayer}
  \end{tikzpicture}
  =
  \begin{tikzpicture}[baseline=(middle)]
    \path 
      (0,0) node[dot] (gamma) {} node[anchor=north] {$\gamma$}
      (1,3) node[dot] (phi) {} node[anchor=south] {$\phi$}
      (0,1.5) coordinate (middle)
    ;
    \draw[onearrow] (gamma.center) .. controls +(1,1) and +(1,-1) .. node[auto,swap] {$g$}  (phi.center);
    \draw[onearrow] (gamma.center) .. controls +(-1,1) and +(-1,-1) .. node[auto] {$f$} (phi.center);
    \begin{pgfonlayer}{background}
    \fill[gray!50] 
      (gamma.center) .. controls +(1,1) and +(1,-1) .. (phi.center)
      .. controls +(-1,-1) and +(-1,1) .. (gamma.center)
    ;
    \end{pgfonlayer}
  \end{tikzpicture}
  $$
  The white exterior region in each diagram is the object $X$, and the shaded interior region is the object $Y$.
  The boxed subdiagram on the left is the string diagram for $\tilde\gamma$. The second equality is simply the fact that $\eta$ and $\epsilon$ are the unit and counit of an adjunction between $f$ and $f^R$.
\end{proof}

\begin{remark}
 In the proof of Proposition~\ref{prop.unitalcondensation}, and especially in the proof of Theorem~\ref{thm.unitalization} below, we have doubled down on our convention to write ``$=$'' to mean ``isomorphic, canonically up to a contractible space of choices.'' This is consistent with the ``principle of univalence'' that should be built into any model-independent theory of higher categories. Readers uncomfortable with this principle can decide that $\cC$ is merely a 2-category: the results remain nontrivial, and compare our theory of 2-dimensional Karoubi completion to the version from \cite{Reutter2018}.\end{remark}

Given either a separable adjunction or a unital condensation (aka separated adjunction), one may consider the endomorphism $e = gf : X \to X$. 
We observed already that the morphism $\phi : fg \to \id_Y$ makes $e$ into an associative algebra object in $\End(X)$; the unit $\eta$ is precisely what is needed to make $e$ into a unital algebra. It is a general fact, true at arbitrary category number, that if an associative algebra admits a unit, then the unit is unique (up to a contractible space of choices): \emph{unitality} is a property. We are therefore justified to define a \define{unital condensation monad} to be a condensation monad whose associative multiplication is unital. Dually, a \define{counital condensation monad} is one whose coassociative comultiplication is counital. A \define{separable monad} is a unital associative monad which can be extended to a unital condensation monad.

\begin{remark}\label{remark.unitalmodule}
  Suppose that $e$ is a unital condensation monad and that $m$ is a (left, say) condensation $e$-module. The definition of condensation module does not mention the unit in $e$, and in the associative case it is easy to construct examples of a unital associative algebra with a module-as-a-nonunital-associative-algebra for which the unit does not act as the identity. Nevertheless, in the condensation case the unit in $e$ automatically acts as the identity on $m$.  Indeed, the unit in $e$ acts as the identity on the composition $e \otimes_e m$, because it acts on the composition through its action on $e$, but this composition is isomorphic to $m$.
\end{remark}

\begin{theorem}\label{thm.unitalization}
  Let $\cC$ be an $n$-category all of whose hom $(n-1)$-categories have all condensates.
  Suppose that $(X,e,\dots)$ is a condensation monad in $\cC$ whose underlying $1$-morphism $e : X\to X$ has a right adjoint $e^R$. Then $(X,e,\dots)$ is equivalent as an object of $\Kar(\cC)$ to an object represented by a unital condensation monad, and also to an object represented by a counital condensation monad.
\end{theorem}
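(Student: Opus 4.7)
The plan is to work inside $\Kar(\cC)$. By Theorem~\ref{thm.Kar}, $(X,e,\dots)$ has a condensate $Y \in \Kar(\cC)$, realized by a condensation $X \condense Y$ with $1$-morphisms $f : X \to Y$ and $g : Y \to X$ satisfying $gf = e$. Since the inclusion $\cC \hookrightarrow \Kar(\cC)$ is fully faithful, the adjunction $e \dashv e^R$ persists in $\Kar(\cC)$. I will construct a right adjoint $f^R$ of $f$ inside $\Kar(\cC)$; Proposition~\ref{prop.unitalcondensation} applied in the $n$-category $\Kar(\cC)$ then produces a unital condensation $X \condense Y$ with $1$-morphisms $f$ and $f^R$, whose associated unital condensation monad $\tilde e := f^R f$ is an endomorphism of $X = (X,\id_X)$ in $\Kar(\cC)$, hence --- by full faithfulness of the inclusion --- a $1$-morphism in $\cC$. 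By Theorem~\ref{thm.uniqueness}, $(X,\tilde e,\dots)$ and $(X,e,\dots)$ are equivalent as objects of $\Kar(\cC)$ because both have $Y$ as their condensate.

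The candidate right adjoint is $f^R := e^R \circ g$, where $e^R \in \hom_\cC(X,X)$ is viewed as an endomorphism of $X$ in $\Kar(\cC)$ via the inclusion. A direct computation gives $f^R f = e^R \circ (gf) = e^R e$, so the unit of the desired adjunction can be taken to be $\eta : \id_X \Rightarrow e^R e$ itself. For the counit, $f f^R$ has underlying $1$-morphism $e e^R e$ (with an $e$-$e$-bimodule structure coming from the condensation data) while $\id_Y$ has underlying $1$-morphism $e$; the whiskered $2$-morphism $\epsilon \cdot \id_e : e e^R e \Rightarrow e$ then promotes to a counit $\tilde\epsilon : f f^R \Rightarrow \id_Y$ in $\Kar(\cC)$. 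With these choices, the two zig-zag identities for $f \dashv f^R$ reduce cleanly to the zig-zag identities for $e \dashv e^R$. The counital statement is handled symmetrically: one instead sets $g^R := f \circ e^R : X \to Y$, checks $g \dashv g^R$ with the analogous whiskered unit and counit, and invokes the counital form of Proposition~\ref{prop.unitalcondensation} to produce the counital condensation monad $g g^R = (gf) e^R = e e^R$ on $X$.

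The main obstacle is verifying that these candidate $f^R$ and $g^R$ really are adjoints \emph{in} $\Kar(\cC)$, not merely at the level of underlying $1$-morphisms in $\cC$. This amounts to checking that the candidate units and counits are morphisms of the appropriate bimodules for the monads encoding the hom-categories of $\Kar(\cC)$, and that the triangle identities hold in those bimodule categories. For $n > 2$ one must additionally produce inductively all higher coherence data for the adjunctions; these are assembled from the higher adjunction data for $e \dashv e^R$ together with the condensation data $(\phi, \gamma, \dots)$ of $X \condense Y$, via string-diagram manipulations in the spirit of the proof of Proposition~\ref{prop.unitalcondensation}.
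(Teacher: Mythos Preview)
Your overall strategy --- pass to $\Kar(\cC)$, use the condensation $X \condense Y$, build a right adjoint $f^R$, and then apply Proposition~\ref{prop.unitalcondensation} and Theorem~\ref{thm.uniqueness} --- matches the paper's exactly. The gap is in your construction of $f^R$ and, specifically, in the claim that $\tilde\epsilon = \epsilon \cdot \id_e : e e^R e \Rightarrow e$ ``promotes'' to a $2$-morphism $f f^R \Rightarrow \id_Y$ in $\Kar(\cC)$.

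A $2$-morphism $f f^R \Rightarrow \id_Y$ in $\Kar(\cC)$ must be a morphism of $e$-$e$ condensation bimodules. The left $e$-action on $f f^R$ (underlying $e e^R e$) is by multiplication into the leftmost $e$-factor, coming from the left $e$-module structure on $f$. For $\epsilon \otimes \id_e$ to respect this, one would need
\[
(\epsilon \otimes \id_e)\circ(\mathrm{mult}\otimes \id_{e^R e}) \;=\; \mathrm{mult}\circ(\id_e \otimes \epsilon \otimes \id_e)
\]
as maps $e\, e\, e^R\, e \to e$. But the counit $\epsilon : e e^R \Rightarrow \id_X$ of the adjunction $e \dashv e^R$ has no a priori compatibility with the multiplication on $e$, so this fails. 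Concretely, for $\cC = \rB\Vect$ with $e$ a finite-dimensional algebra and $e^R = e^*$, your map sends $x\otimes\phi\otimes y \mapsto \phi(x)\,y$, and one checks $\phi(ax)\,y \neq \phi(x)\,(ay)$ already for $e = \bK^2$. So $\tilde\epsilon$ is not a valid $2$-morphism in $\Kar(\cC)$, and the zig-zag verification never gets off the ground.

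The paper anticipates exactly this trap: it remarks that $e^R$ is naturally a right \emph{comodule} for the coalgebra structure on $e$, not a right module for the algebra structure. Its remedy is to use the full Frobenius (condensation) structure. Rather than taking $f^R$ with underlying $e^R e$, the paper defines $f^R$ as the condensate of a specific condensation monad on $e^R e$ built from the comultiplication of $e$ and the unit of $e \dashv e^R$ (the diagram~(\ref{eqn.unitalization})). The unit and counit of $f \dashv f^R$ are then explicit string diagrams that mix the multiplication, the comultiplication, and the adjunction data, and the zig-zag identities are verified using the Frobenius relation together with $e \dashv e^R$. Your argument would need to incorporate the coalgebra structure on $e$ in a similar way; the adjunction $e \dashv e^R$ alone is not enough.
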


Since objects and morphisms in $\Kar(\cC)$ are types of algebras and bimodules, equivalence in $\Kar(\cC)$ is a type of Morita equivalence.

\begin{proof}
  Through only the most mild abuse of notation, let us write $X \in \Kar(\cC)$ for the object represented by the condensation monad $(X,\id,\dots)$, and $e\in \Kar(\cC)$ for the object represented by $(X,e,\dots)$. Theorem~\ref{thm.Kar} provides a condensation $X \condense e$ in $\Kar(\cC)$. The $1$-morphisms $f: X \to e$ and $g : e \to X$ are, respectively, $e$ thought of as a left condensation $e$-module and $e$ thought of as a right condensation $e$-module.
  
  We will show that the $1$-morphism $f: X \to e$ in $\Kar(\cC)$ admits a right adjoint $f^R$. A similar construction produces a right adjoint to $g$. Before giving the construction, we explain why these right adjoints suffice to prove the Theorem. Indeed, by Proposition~\ref{prop.unitalcondensation}, the right adjoint $f^R$ provides a unital condensation $X \condense e$ in $\Kar(\cC)$, which is to say a unital condensation monad $e' = f^R \otimes_e f : X \to X$ in $\Kar(\cC)$. The inclusion $\cC \to \Kar(\cC)$ is fully faithful, and so $e'$ determines a unital condensation monad in the original category $\cC$. But $e'$ is constructed from a condensation $X \to e$, and so it condenses (in $\Kar(\cC)$) to $e$, and on the other hand it automatically condenses to $e'$. Theorem~\ref{thm.uniqueness} finishes the job: $e$ is equivalent in $\Kar(\cC)$ to the unital condensation monad $e'$. By using $g^R$ instead, we would have constructed a counital $e'$.
  
  It remains to construct $f^R$. It is supposed to be a morphism $e\to X$ in $\Kar(\cC)$, which is to say it is supposed to be a right condensation $e$-module. Since $f$ is ``$e$ as an $e$-module,'' and since $e$ admits a right-adjoint $e^R$, it is tempting to think that $f^R$ will be ``$e^R$ as an $e$-module.'' In fact, $e^R$ is naturally a right comodule for the (noncounital) coalgebra structure on $e$, with coaction related by the adjunction to the action of $e$ on itself, but it is not naturally a right module for the algebra structure, and hence not naturally a condensation module. (On the left, it is naturally a module but not a comodule.) 
  
  That said, the natural comodule structure on $e^R$ is enough to define a version of the tensor product ``$e^R \otimes_e e$.'' This tensor product will be our condensation module $f^R$. Specifically, $e^Re$ supports a condensation monad whose underlying $2$-morphism $e^Re\Rightarrow e^Re$ is given by the following string diagram. (As above, our conventions for string diagrams is that composition of $1$-morphisms is from right to left, and composition of $2$-morphisms is from bottom to top.)
  \begin{equation} \label{eqn.unitalization}
    \begin{tikzpicture}[baseline=(basepoint)]
      \draw[onearrow] (0,2) -- (0,1); \draw[onearrow] (0,1) -- (0,0); 
      \draw[onearrow] (1,0) -- (1,1); \draw[onearrow] (1,1) -- (1,2); 
      \draw[onearrow] (0,1) .. controls +(.25,-.5) and +(-.25,-.5) .. (1,1);
      \path (0,1) coordinate (basepoint);
    \end{tikzpicture}
  \end{equation}
  The upward-pointing edges denote $e$, and the downward-pointing edges are $e^R$. The cup is the unit of the adjunction between $e$ and $e^R$.
  The trivalent vertices are the multiplication and comultiplication on $e$, rotated using the adjunction $e \dashv e^R$.
  The tensor product $f^R = \text{``}e^R \otimes_e e\text{''}$ is the condensate produced from this condensation monad. 
  We emphasize that $f^R$ and $e^R$ are typically not the same as $1$-morphisms: the isomorphism $m \otimes_e e \cong m$ holds when $m$ is a condensation $e$-module, or when $e$ is a unital algebra (with unital action on $m$), but not for arbitrary modules of a nonunital algebra.
  
  It is clear that $f^R$ is a right condensation $e$-module: the right condensation action of $e$ on itself commutes with the left action used to define $f^R$. 
  We will henceforth use white regions to denote the object $X \in \Kar(\cC)$, and shading to denote $e$, so that an edge with shading on the side means a (bi)module. So for example the composition $f^R \otimes_e f$ will be denoted:
  $$
    \begin{tikzpicture}
      \fill[gray!50] (1,0) rectangle (2,2);
      \draw[onearrow] (0,2) -- (0,1); \draw[onearrow] (0,1) -- (0,0); 
      \draw[onearrow] (1,0) -- (1,1); \draw[onearrow] (1,1) -- (1,2); 
      \draw[onearrow] (0,1) .. controls +(.25,-.5) and +(-.25,-.5) .. (1,1);
      \draw[onearrow] (2,0) -- (2,2); %\draw[onearrow] (2,1.5) -- (2,2);
    \end{tikzpicture}
  $$
 Recall from Definition~\ref{defn.tensorproduct} that $\otimes_e$ is implementing by condensing a network of $e$-edges, so that for instance
 \,$\begin{tikzpicture}[baseline=(basepoint)]
    \path(0,2pt) coordinate (basepoint);
    \fill[gray!50] (0, -1pt) rectangle (2ex,12pt);
    \draw[onearrow] (0,-1pt) -- (0,12pt);
    \draw[onearrow] (2ex,-1pt) -- (2ex,12pt);
  \end{tikzpicture}
  =
  \begin{tikzpicture}[baseline=(basepoint)]
    \path(0,2pt) coordinate (basepoint);
    \draw[onearrow] (0,-1pt) -- (0,12pt);
    \draw[onearrow] (2ex,-1pt) -- (2ex,12pt);
    \draw (0,8pt) -- (2ex,8pt);
  \end{tikzpicture}
  =
  \begin{tikzpicture}[baseline=(basepoint)]
    \path(0,2pt) coordinate (basepoint);
    \draw[onearrow] (0,-1pt) -- (0,12pt);
  \end{tikzpicture}
  $\;.
  In particular, $f^R \otimes_e f$ is simply the underlying $1$-morphism $X \to X$ of $f^R$, without its structure as a right condensation $e$-module.

  To complete the proof, we must witness the adjunction $f \dashv f^R$, i.e.\ we must give unit and counit maps $\id_X \Rightarrow f^R \otimes_e f$ and $f \otimes_X f^R \Rightarrow \id_e$.   
  The unit $\id_X \Rightarrow f^R \otimes_e f$ of the adjunction is  given by splitting a ``bubble collapsing'' condensation monad supported by the following diagram:
  $$
    \begin{tikzpicture}
      \fill[gray!50] (1.5,-1) .. controls +(-.25,.5) and +(0,-.5) .. (1,0) -- (1,2) -- (2,2) -- (2,0) .. controls +(0,-.5) and +(.25,.5)  .. (1.5,-1);
      \draw[onearrow] (0,2) -- (0,1); \draw[onearrow] (0,1) -- (0,0); 
      \draw[onearrow] (1,0) -- (1,1); \draw[onearrow] (1,1) -- (1,2); 
      \draw[onearrow] (0,1) .. controls +(.25,-.5) and +(-.25,-.5) .. (1,1);
      \draw[onearrow] (2,0) -- (2,2);
      \draw[onearrow] (1.5,-1) .. controls +(-.25,.5) and +(0,-.5) .. (1,0);
      \draw[onearrow] (1.5,-1) .. controls +(.25,.5) and +(0,-.5) .. (2,0);
      \draw[onearrow] (0,0) .. controls +(0,-1) and +(0,-.5) .. (1.5,-1);
    \end{tikzpicture}
  $$
  The counit $f \otimes_X f^R \Rightarrow \id_e$ is the splitting of a ``bubble collapsing'' condensation monad supported by:
  $$
    \begin{tikzpicture}
      \fill[gray!50] (1,0) -- (1,1) -- (1,2) .. controls +(0,1) and +(.25,-.5) .. (0,3) -- (0,4) -- (2,4) -- (2,0) -- (1,0);
      \fill[gray!50] (-1,0) -- (-1,1) -- (-1,2) .. controls +(0,1) and +(-.25,-.5) .. (0,3) -- (0,4) -- (-2,4) -- (-2,0) -- (-1,0);
      \draw[onearrow] (0,2) -- (0,1); \draw[onearrow] (0,1) -- (0,0); 
      \draw[onearrow] (1,0) -- (1,1); \draw[onearrow] (1,1) -- (1,2); 
      \draw[onearrow] (0,1) .. controls +(.25,-.5) and +(-.25,-.5) .. (1,1);
      \draw[onearrow] (-1,0) -- (-1,2); 
      \draw[onearrow] (-1,2) .. controls +(.25,.5) and +(0,.5) .. (0,2);
      \draw[onearrow] (-1,2) .. controls +(0,1) and +(-.25,-.5) .. (0,3);
      \draw[onearrow] (1,2) .. controls +(0,1) and +(.25,-.5) .. (0,3);
      \draw[onearrow] (0,3) -- (0,4);
    \end{tikzpicture}
  $$
  Note that 
  \begin{tikzpicture}[baseline=(basepoint)]
    \path(0,2pt) coordinate (basepoint);
    \fill[gray!50] (-2ex, -1pt) rectangle (2ex,12pt);
    \draw[onearrow] (0,-1pt) -- (0,12pt);
  \end{tikzpicture}
  is a picture of the ``$e$ as an $e$-bimodule,'' which is to say the identity 1-morphism $\id_e$ in $\Kar(\cC)$.
  
  A diagrammatic calculation (left to the reader) confirms that these are in fact the unit and counit of an adjunction $f \dashv f^R$.
  $$ 
    \begin{tikzpicture}[baseline=(basepoint)]
      \path (0,1) coordinate (basepoint);
      \fill[gray!50] (4,-2) -- (4,0) -- (4,1) -- (4,2) .. controls +(0,1) and +(.25,-.5) .. (3,3) -- (3,4) -- (5,4) -- (5,-2) -- (4,-2);
      \fill[gray!50] (2,0) -- (2,1) -- (2,2) .. controls +(0,1) and +(-.25,-.5) .. (3,3) -- (3,4) -- (1,4) -- (1,0) -- (2,0);
      \fill[gray!50] (1.5,-1) .. controls +(-.25,.5) and +(0,-.5) .. (1,0) -- (1,2) -- (2,2) -- (2,0) .. controls +(0,-.5) and +(.25,.5)  .. (1.5,-1);
      \draw[onearrow] (0,2) -- (0,1); \draw[onearrow] (0,1) -- (0,0); 
      \draw[onearrow] (1,0) -- (1,1); \draw[onearrow] (1,1) -- (1,2); 
      \draw[onearrow] (2,0) -- (2,2);
      \draw[onearrow] (1.5,-1) .. controls +(-.25,.5) and +(0,-.5) .. (1,0);
      \draw[onearrow] (1.5,-1) .. controls +(.25,.5) and +(0,-.5) .. (2,0);
      \draw[onearrow] (0,0) .. controls +(0,-1) and +(0,-.5) .. (1.5,-1);
      \draw[onearrow] (3,2) -- (3,1); \draw[onearrow] (3,1) -- (3,0); 
      \draw[onearrow] (4,0) -- (4,1); \draw[onearrow] (4,1) -- (4,2); 
      \draw[onearrow] (2,2) .. controls +(.25,.5) and +(0,.5) .. (3,2);
      \draw[onearrow] (2,2) .. controls +(0,1) and +(-.25,-.5) .. (3,3);
      \draw[onearrow] (4,2) .. controls +(0,1) and +(.25,-.5) .. (3,3);
      \draw[onearrow] (3,3) -- (3,4);
      \draw[onearrow] (0,4) -- (0,2); \draw[onearrow] (1,2) -- (1,4); \draw[onearrow] (3,0) -- (3,-2); \draw[onearrow] (4,-2) -- (4,0);
      \draw[onearrow] (0,1) .. controls +(.25,-.5) and +(-.25,-.5) .. (1,1);
      \draw[onearrow] (3,1) .. controls +(.25,-.5) and +(-.25,-.5) .. (4,1);
    \end{tikzpicture}
    \quad \condense \quad 
    \begin{tikzpicture}[baseline=(basepoint)]
      \path (0,1) coordinate (basepoint);
      \fill[gray!50] (1,-2) rectangle (2,4);
      \draw[onearrow] (0,4) -- (0,-2); \draw[onearrow] (1,-2) -- (1,4);
      \draw[onearrow] (0,1) .. controls +(.25,-.5) and +(-.25,-.5) .. (1,1);
    \end{tikzpicture}
    $$ $$
    \begin{tikzpicture}[baseline=(basepoint)]
      \path (0,1) coordinate (basepoint);
      \fill[gray!50] (1,0) -- (1,1) -- (1,2) .. controls +(0,1) and +(.25,-.5) .. (0,3) -- (0,4) -- (2,4) -- (2,0) -- (1,0);
      \fill[gray!50] (-1,-2) -- (-1,1) -- (-1,2) .. controls +(0,1) and +(-.25,-.5) .. (0,3) -- (0,4) -- (-2,4) -- (-2,-2) -- (-1,-2);
      \fill[gray!50] (1.5,-1) .. controls +(-.25,.5) and +(0,-.5) .. (1,0) -- (1,2) -- (2,2) -- (2,0) .. controls +(0,-.5) and +(.25,.5)  .. (1.5,-1);
      \draw[onearrow] (0,2) -- (0,1); \draw[onearrow] (0,1) -- (0,0); 
      \draw[onearrow] (1,0) -- (1,1); \draw[onearrow] (1,1) -- (1,2); 
      \draw[onearrow] (0,1) .. controls +(.25,-.5) and +(-.25,-.5) .. (1,1);
      \draw[onearrow] (-1,0) -- (-1,2); 
      \draw[onearrow] (-1,2) .. controls +(.25,.5) and +(0,.5) .. (0,2);
      \draw[onearrow] (-1,2) .. controls +(0,1) and +(-.25,-.5) .. (0,3);
      \draw[onearrow] (1,2) .. controls +(0,1) and +(.25,-.5) .. (0,3);
      \draw[onearrow] (0,3) -- (0,4);
      \draw[onearrow] (2,0) -- (2,2);
      \draw[onearrow] (1.5,-1) .. controls +(-.25,.5) and +(0,-.5) .. (1,0);
      \draw[onearrow] (1.5,-1) .. controls +(.25,.5) and +(0,-.5) .. (2,0);
      \draw[onearrow] (0,0) .. controls +(0,-1) and +(0,-.5) .. (1.5,-1);
      \draw[onearrow] (2,2) -- (2,4); \draw[onearrow] (-1,-2) -- (-1,0);
    \end{tikzpicture}
    \quad \condense \quad 
    \begin{tikzpicture}[baseline=(basepoint)]
      \path (0,1) coordinate (basepoint);
      \fill[gray!50] (0,-2) rectangle (1,4);
      \draw[onearrow] (1,-2) -- (1,4);
    \end{tikzpicture}
  $$
  In both cases the calculation uses the adjunction $e \dashv e^R$, the Frobenius axtioms for $e$, and the ``bubble collapsing''  condensation monads (not drawn).
\end{proof}

\subsection{Physical interpretation: the state-operator map} \label{subsec.stateoperatorcorresp}

Suppose $X$ and $Y$ are two gapped topological $(1+1)$-dimensional phases and that we have a way of (physically) condensing from $X$ to $Y$.
In \S\ref{sec.bc}
we constructed a (categorical) condensation $X \condense Y$ by partially condensing in the $x>0$ or $x<0$ domains. The interfaces $f$ and $g$ produced by this procedure cannot be literally ``the same'' because they point in different directions. In a TQFT context, it would be natural to request that they are related by $180^\circ$ rotation. This request is much less natural in the condensed matter context, where any notion of ``rotation of an interface'' is at best emergent in the low-energy. 
We have mentioned already that the ``topological'' limits of gapped topological condensed matter phases are, a priori, actually framed-topological. It is certainly feasible that there may be condensation procedures that couple nontrivially to the microscopic lattice framing. In this case it can happen that counterclockwise $180^\circ$-rotation by is not equivalent to clockwise $180^\circ$-rotation, even if both rotations make sense. The former produces the left adjoint $f^L$ of an interface $f$, whereas the latter produces the right adjoint $f^R$.
That we may have $f^L \neq f^R$ illustrates the a priori unnaturalness of demanding that $g$ equal either of them.
Nevertheless, Proposition~\ref{prop.unitalcondensation} shows that, assuming these interfaces can be rotated, then we lose no generality by requesting that $g = f^R$, or that $g = f^L$, if we so desire.

Similarly, unitality for a condensation monad is natural in the (oriented) TQFT context, where the space of states on a segment (provided that ``the same'' boundary condition is used at both ends of the segment) is naturally equipped not just with a multiplication map (given by adiabatically merging two segments to one) and a comultiplication map (adiabatically cutting a segment in two) satisfying the Frobenius axioms, but also with a unit and counit corresponding to half-disk geometries.
Of course, these half disks only exist if we use ``the same'' boundary condition for the two ends of the segment (i.e.\ if $g = f^L = f^R$).

Suppose we are given a nonunital condensation algebra $e$, and we construct, following Theorem~\ref{thm.unitalization}, a Morita-equivalent unital condensation algebra $e' = e^R \otimes_e e$. By opening up the top of~(\ref{eqn.unitalization}), we can recognize $e'$ as the space of insertions at ``$\ast$'' in the following diagram:
$$\begin{tikzpicture}
  \fill[gray!50] (0,-1.5) -- (0,-.5) .. controls +(-.5,.5) and +(-.5,-.5) .. (0,.5) -- (0,1.5) -- (-1,1.5) -- (-1,-1.5) -- (0,-1.5);
  \path (0,0) node (X) {$\ast$};
  \draw[onearrow] (0,-1.5) -- (X);
  \draw[onearrow] (X) -- (0,1.5);
  \draw[onearrow] (0,-.5) .. controls +(-.5,.5) and +(-.5,-.5) .. (0,.5);
\end{tikzpicture}
$$
As above, the shaded region denotes the phase produced by condensing the $e$-anyon. For comparison, the original nonunital monad $e$ is the space of states for the condensed system.

But this $e'$ is precisely the space of \define{boundary operators} for the boundary condition \,$\begin{tikzpicture}[baseline=(basepoint)]
    \path(0,2pt) coordinate (basepoint);
    \fill[gray!50] (0, -1pt) rectangle (2ex,12pt);
    \draw[onearrow] (2ex,-1pt) -- (2ex,12pt);
  \end{tikzpicture}$\,. The multiplication on $e'$ is precisely the multiplication of boundary operators.
Furthermore, there is a \define{state-operator map} $e \to e'$ described by the string diagram
$$    \begin{tikzpicture}[baseline=(basepoint)]
      \draw[onearrow] (0,1) .. controls +(.25,-.5) and +(-.25,-.5) .. (1,1);
      \draw[onearrow] (0,2) -- (0,1);   \draw[onearrow] (1,1) -- (1,2);
      \draw[onearrow] (1,0) -- (1,1);   \draw[onearrow] (1,-1) -- (1,0);
      \draw[onearrow] (0,1) -- (0,.5) .. controls +(0,-.5) and +(-.25,-.5) .. (1,0);
    \end{tikzpicture}
$$
which is an isomorphism if and only if $e$ was already unital --- only in the unital case do we get a true ``state-operator correspondence.''

\begin{remark}
  Any nonunital associative algebra $A$, not necessarily a condensation algebra, determines a unital associative algebra, called the \define{multiplier algebra of $A$}, defined as the algebra $\End_A(A)$ of endomorphisms of $A$ thought of as a left $A$-module.  The multiplication map determines an associative algebra morphism $A \to \End_A(A)$. The construction $e \leadsto e'$ from Theorem~\ref{thm.unitalization} is precisely this construction $A \leadsto \End_A(A)$ in the special case where $e$ is a (right-adjunctible) condensation algebra.
  
 The Morita equivalence $e \simeq e'$ from Theorem~\ref{thm.unitalization} is special to the condensation case: for general associative algebras there is no sense in which $A$ and $\End_A(A)$ are Morita equivalent.
\end{remark}

\subsection{Condensation bimodules vs unital algebra bimodules}

Theorem~\ref{thm.unitalization} allows us in many cases to replace condensation algebras with unital condensation algebras. In order to get a comparison with unital separable algebras, 
we must show that the choice of comultiplication can be dropped.

\begin{lemma}\label{lemma.extendingmodulestructure}
  Let $e$ be a condensation monad in an $n$-category $\cC$. Suppose that $m$ is left $e$-module with respect to the (nonunital) associative algebra structure on $e$. If the $e$-module structure on $m$ extends to a condensation $e$-module structure, then it does so in a unique way (up to a contractible space of choices).
\end{lemma}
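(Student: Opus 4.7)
The plan is to reduce the statement to Theorem~\ref{thm.uniqueness} applied inside the hom $(n-1)$-category $\hom_\cC(X',X)$, where $m : X' \to X$ is the underlying $1$-morphism. The key observation is that any condensation $e$-module structure on $m$ presents $m$ as the condensate of a condensation monad on $em$ that is canonically determined by the action $a : em \to m$ (together with the already-fixed structure of~$e$).

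First I would construct this canonical condensation monad. Writing $\Delta : e \to ee$ for the comultiplication coming from the condensation monad structure on~$e$, set $p := (e\,a) \circ (\Delta\,m) : em \to em$. Using the higher data of the condensation monad~$e$ together with the (nonunital) associativity of the action~$a$, one assembles around $p$ the structure of a condensation monad $(em, p, \dots)$ in $\hom_\cC(X',X)$, in the spirit of Proposition~\ref{prop.condalg}: the multiplication, comultiplication, and higher Frobenius-type data on~$e$ supply the necessary merges and splittings between the iterated powers of $p$, while the action $a$ intertwines everything with the $m$-strand. This construction manifestly uses only $a$ and the ambient structure of~$e$.

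Next I would show that any condensation $e$-module structure on $m$ extending $a$ presents $m$ as the condensate of $(em, p, \dots)$. If $c : m \to em$ denotes the coaction in such a structure, then module specialness gives $a \circ c = \id_m$, while the module Frobenius axiom (cf.\ the diagrams in Example~\ref{eg.specialfrob}) is precisely the identification of $c \circ a$ with the canonical $p$ built from $a$ and $\Delta$. The remaining coherences of the condensation module structure match the corresponding higher data of $(em, p, \dots)$ inherited from~$e$. Thus $(a, c, \dots)$ is a condensation $em \condense m$ in $\hom_\cC(X',X)$ extending the canonical monad~$p$.

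The conclusion then follows from Theorem~\ref{thm.uniqueness} applied to $\hom_\cC(X',X)$: two condensation module structures $\sigma_1, \sigma_2$ extending~$a$ yield two condensations $em \condense m$ with the same underlying condensation monad $(em, p, \dots)$, hence live in a contractible $(n-1)$-category of extensions. The hardest step is the second one: systematically matching the higher coherences of an arbitrary condensation module structure with those of $(em, p, \dots)$ in an $n$-categorical setting. I expect this to be handled by an induction on $n$ analogous to the argument in Proposition~\ref{prop.condalg}, but carried out in the full subcategory of (\ref{eqn.leftmod}) on $\{X, X'\}$ — the walking left module — using its freeness to cut the coherences down to a short verifiable list.
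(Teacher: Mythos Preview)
Your approach is essentially the paper's own argument. The paper constructs exactly your condensation monad $p=(e\,a)\circ(\Delta\,m)$ on $em$ (that is the string diagram it draws), defines $\tilde m=e\otimes_e m$ as its condensate, and then observes that any condensation module structure on $m$ makes $m$ isomorphic to $\tilde m$ as a condensation module---which is precisely your step of exhibiting $(a,c,\dots)$ as a condensation $em\condense m$ extending $p$ and invoking Theorem~\ref{thm.uniqueness}. The only difference is packaging: the paper phrases the conclusion as ``all extensions are isomorphic to the canonical $\tilde m$,'' while you phrase it as ``the extensions form a contractible space of condensations over $p$''; these are the same statement, and both rest on the same coherence check you flag as the hardest step.
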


\begin{proof}
  By mimicking the proof of Theorem~\ref{thm.unitalization}, we can construct a ``tensor product'' $\tilde m = \text{``}e \otimes_e m\text{''}$ which will be automatically a left condensation $e$-module. Indeed, consider the ($2$-)morphism $e \circ m \Rightarrow e \circ m$ defined by the following string diagram:
  $$   \begin{tikzpicture}[baseline=(basepoint)]
   \path (0,.9) coordinate (basepoint);
   \draw[onearrow] (0,0) -- (0,.5);
   \draw[onearrow] (0,.5) -- (0,2);
   \draw[onearrow] (0,.5) -- (1,1.5);
   \draw[ultra thick] (1,0) -- (1,2);
  \end{tikzpicture}
 $$
 Here and throughout, $\; 
  \begin{tikzpicture}[baseline=(basepoint)]
    \path(0,2pt) coordinate (basepoint);
    \draw[onearrow] (0,-1pt) -- (0,12pt);
  \end{tikzpicture}\;$ denotes the condensation monad $e$ and $\; 
  \begin{tikzpicture}[baseline=(basepoint)]
    \path(0,2pt) coordinate (basepoint);
    \draw[ultra thick] (0,-1pt) -- (0,12pt);
  \end{tikzpicture}\;$ denotes $m$, and the trivalent vertices denote various (co)multiplications and actions. Just using coassociativity of  $
  \; 
  \begin{tikzpicture}[baseline=(basepoint)]
    \path(0,2pt) coordinate (basepoint);
    \draw[onearrow] (0,-1pt) -- (0,5pt); \draw[onearrow] (0,5pt) -- (-4pt,12pt); \draw[onearrow] (0,5pt) -- (4pt,12pt);
  \end{tikzpicture}\;$, associativity of the action $ \; 
  \begin{tikzpicture}[baseline=(basepoint)]
    \path(0,2pt) coordinate (basepoint);
    \draw[onearrow] (-4pt,-1pt) -- (4pt,7pt); \draw[ultra thick] (4pt,-1pt) -- (4pt,12pt);
  \end{tikzpicture}\;$, and the specialness axiom for $e$ (see Remark~\ref{remark.special} for the $n>2$ case), one can produce on this (2-)morphism a condensation monad structure. Condensing this monad defines $\tilde m = e \otimes_e m$.
  
  When the $e$-action on $m$ extends to a condensation action already, then $\tilde m$ will be isomorphic, as a condensation module, to $m$. Thus all extensions, if they exist, are canonically equivalent.
\end{proof}

\begin{lemma}\label{lemma.extendingmonadstructure}
  Let $e$ be a (nonunital) associative monad in $\cC$. If the associative monad structure on $e$ extends to a condensation monad structure, then all extensions are canonically ``condensation Morita equivalent,'' i.e.\ canonically equivalent as objects of $\Kar(\cC)$.
\end{lemma}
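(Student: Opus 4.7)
The plan is to exhibit, given two condensation monad structures $e_1$ and $e_2$ on the same underlying nonunital associative algebra $e$, a canonical invertible bimodule in $\Kar(\cC)$ between them. The natural candidate is $e$ itself, viewed as an $e_1$-$e_2$-bimodule via its own multiplication on each side.

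First I would build the candidate bimodule $M$. Take $e$ as the underlying $1$-morphism with its left action by $e_1$ and right action by $e_2$, both coming from the multiplication map $e^2 \to e$; these commute as ordinary associative actions by associativity. Applying Lemma~\ref{lemma.extendingmodulestructure} inside $\End(X)$ (equivalently, in the monoidal $(n-1)$-category describing endomorphisms of the underlying object), the left associative $e_1$-action extends uniquely to a condensation $e_1$-module structure on $e$, and likewise on the right for $e_2$. For this to assemble into a condensation bimodule I must supply the commutativity data of Remark~\ref{remark.condbimod}: the right $e_2$-action morphisms $m e_2^i \to m e_2^j$ must be maps of left condensation $e_1$-modules. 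Here the key point is that both sides of the desired commutativity are extensions, to condensation $e_1$-modules, of the \emph{same} underlying associative bimodule data (which commutes by plain associativity). By the uniqueness clause of Lemma~\ref{lemma.extendingmodulestructure}, these extensions agree up to a contractible space of choices, giving the required commutativity coherently. Symmetrically, the same data with roles swapped defines a condensation $e_2$-$e_1$-bimodule $N$.

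Next I would show $M$ and $N$ are mutually inverse in $\Kar(\cC)$. The tensor product $M \otimes_{e_1} N$ is, by Definition~\ref{defn.tensorproduct}, the condensate of a canonical condensation monad on the underlying composite $e \circ e$. Because the right condensation $e_1$-module structure on $M$ was obtained, by uniqueness, from the tautological right $e_1$-module structure on $e_1$ itself, the formal isomorphism $m \otimes_e e \cong m$ for condensation modules applies, collapsing $M \otimes_{e_1} N$ to $e_2$ as an $e_2$-$e_2$-bimodule, which is the identity $1$-morphism on $e_2$ in $\Kar(\cC)$ (see Example~\ref{eg.canonicalmod}). The same argument with indices swapped identifies $N \otimes_{e_2} M$ with the identity on $e_1$. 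Hence $M$ is an equivalence $e_2 \to e_1$ in $\Kar(\cC)$, canonical because every step (the extension of actions, the commutativity data, and the collapse of the tensor products) was canonical up to contractible choices.

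The main obstacle will be the bookkeeping of higher coherence in the bimodule extension step. Lemma~\ref{lemma.extendingmodulestructure} gives uniqueness for one-sided condensation-module structures, but a bimodule requires genuine commutativity data between the two sides, not just a pair of one-sided extensions. I expect to handle this by running the proof of Lemma~\ref{lemma.extendingmodulestructure} in a two-sided fashion: form the ``bimodule tensor product'' $e_1 \otimes_{e_1} e \otimes_{e_2} e_2$ using specialness of both $e_1$ and $e_2$, and observe that the two iterated one-sided constructions agree, yielding a single condensation bimodule structure and automatic commutativity. Once this setup is in place, the inverse-bimodule verification is essentially formal, reducing to the cancellation $m \otimes_e e \cong m$ already invoked in the proof of Theorem~\ref{thm.unitalization}.
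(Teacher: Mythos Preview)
Your proposal is correct and converges to exactly the paper's argument: the paper directly constructs the candidate bimodule as $e_1 \otimes_{e_1} e \otimes_{e_2} e_2$ (precisely the two-sided construction you identify in your final paragraph as the clean way to handle the coherence obstacle), and then shows the composites are identities by observing that they are condensation bimodule extensions of $e$ as an $e_i$-$e_i$-bimodule, hence unique by Lemma~\ref{lemma.extendingmodulestructure}. Aside from a small index slip (your $M \otimes_{e_1} N$ should be $N \otimes_{e_1} M$ to land in $e_2$-$e_2$-bimodules), your reasoning matches the paper's.
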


\begin{proof}
  Suppose that $e_1$ and $e_2$ are two condensation monads whose underlying associative monads are both  $e$. Consider $e$ as an associative $e$-$e$-bimodule. 
  Then, as in the proof of Lemma~\ref{lemma.extendingmodulestructure}, construct the tensor product $e_1 \otimes_{e_1} e \otimes_{e_2} e_2$. It is by construction a condensation $e_1$-$e_2$-bimodule. But its underlying associative module is simply $e$ as an $e$-$e$-module, just as in Lemma~\ref{lemma.extendingmodulestructure}, and so we have constructed a (unique) condensation bimodule extension. Similarly, the tensor product $e_2 \otimes_{e_2} e \otimes_{e_1} e_1$ determines a condensation $e_2$-$e_1$-bimodule structure on $e$. Finally, the tensor products $(e_2 \otimes_{e_2} e \otimes_{e_1} e_1) \otimes_{e_1} (e_1 \otimes_{e_1} e \otimes_{e_2} e_2)$ and $(e_1 \otimes_{e_1} e \otimes_{e_2} e_2) \otimes_{e_2} (e_2 \otimes_{e_2} e \otimes_{e_1} e_1)$ are easily seen to be identities (since they are condensation bimodule extensions of $e$ to an $e_1$-$e_1$- or $e_2$-$e_2$-bimodule, and such extension is unique).
\end{proof}

A monoidal $1$-category is called \define{rigid} if all its objects admit both left and right duals. 
We will call a monoidal $n$-category is \define{$1$-rigid} if all its objects admit left and right duals. (Compare \S\ref{subsec.fulldual}.)
Combining Theorem~\ref{thm.unitalization} with Lemmas~\ref{lemma.extendingmodulestructure} and~\ref{lemma.extendingmonadstructure} gives:

\begin{theorem}\label{thm.reutter}
  Let $\cC$ be a $n$-category such that all hom $(n-1)$-categories in $\cC$ have all condensates and all
   endomorphism $(n-1)$-categories in $\cC$ are $1$-rigid. Then $\Kar(\cC)$ is equivalent to the $n$-category $\cC^\triangledown$ whose objects are separable monads in $\cC$ and whose $n$-morphisms are bimodules in the associative unital sense.
\end{theorem}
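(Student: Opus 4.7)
The plan is to exhibit mutually inverse functors between $\Kar(\cC)$ and $\cC^\triangledown$, leveraging Theorem~\ref{thm.unitalization} together with Lemmas~\ref{lemma.extendingmodulestructure} and~\ref{lemma.extendingmonadstructure}. The $1$-rigidity hypothesis on endomorphism $(n-1)$-categories is exactly what is needed to ensure that the underlying $1$-morphism $e : X \to X$ of every condensation monad in $\cC$ admits a right (and left) adjoint, so that Theorem~\ref{thm.unitalization} applies to every object of $\Kar(\cC)$.

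First I would construct a functor $G : \cC^\triangledown \to \Kar(\cC)$. A separable monad is a unital associative algebra $e$ equipped with a bimodule splitting of its multiplication $e \otimes e \to e$. This splitting provides a candidate comultiplication, which together with the unit and multiplication organizes, via the combinatorics of Remark~\ref{remark.special}, into the full coherence data of a unital condensation monad; Lemma~\ref{lemma.extendingmonadstructure} guarantees that this upgrade is canonical up to equivalence in $\Kar(\cC)$. On bimodules, Lemma~\ref{lemma.extendingmodulestructure} similarly upgrades each unital associative bimodule to an essentially unique condensation bimodule, and higher morphisms of bimodules require no upgrade because they are already intertwiners of the same underlying data.

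In the other direction I would define $F : \Kar(\cC) \to \cC^\triangledown$ by sending $(X, e, \dots)$ first to its unitalization $e'$ provided by Theorem~\ref{thm.unitalization} and then forgetting the comultiplication and higher coalgebra data; the residual condensation $\mathrm{mult} \circ \mathrm{comult} \condense \id_{e'}$ exhibits separability. Condensation bimodules go to unital associative bimodules via the same unitalization procedure, with Remark~\ref{remark.unitalmodule} ensuring that unitality of actions comes for free. The compositions $F \circ G$ and $G \circ F$ are equivalent to the identity because both unitalization and the extension of associative structure to condensation structure are uniquely determined by the strong uniqueness statements in Theorem~\ref{thm.unitalization} and Lemmas~\ref{lemma.extendingmodulestructure}--\ref{lemma.extendingmonadstructure}.

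The main obstacle will be verifying higher coherences: ensuring that $F$ and $G$ preserve composition of bimodules up to canonical equivalence, so that the tensor product $\otimes_e$ of condensation bimodules corresponds to the ordinary relative tensor product $\otimes_{e'}$ of unital bimodules, and that they respect all $k$-morphisms including the associahedral data organizing the (co)multiplications. Given the uniqueness results already in hand, however, these coherences are essentially forced: any two candidate structures agreeing on underlying $1$-morphisms are canonically equivalent, so the verification reduces to identifying the explicit tensor-product constructions on both sides, which can be read off from the construction of $f^R = \text{``}e^R \otimes_e e\text{''}$ in the proof of Theorem~\ref{thm.unitalization}.
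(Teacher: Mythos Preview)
Your proposal is correct and uses the same key ingredients as the paper (Theorem~\ref{thm.unitalization}, Lemmas~\ref{lemma.extendingmodulestructure} and~\ref{lemma.extendingmonadstructure}, Remark~\ref{remark.unitalmodule}), but the paper organizes them more economically by factoring through an intermediate category rather than constructing mutually inverse functors directly.

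The paper introduces $\Kar^{\mathrm{un}}(\cC)$, the \emph{full} sub-$n$-category of $\Kar(\cC)$ on the unital condensation monads. Because it is a full subcategory, the inclusion $\Kar^{\mathrm{un}}(\cC) \hookrightarrow \Kar(\cC)$ is automatically fully faithful; Theorem~\ref{thm.unitalization} makes it essentially surjective. Then the paper considers the forgetful functor $\Kar^{\mathrm{un}}(\cC) \to \cC^\triangledown$ which discards the comultiplication. This is essentially surjective by definition of $\cC^\triangledown$, and fully faithful because the functor $m \mapsto e_1 \otimes_{e_1} m \otimes_{e_2} e_2$ (as in Lemma~\ref{lemma.extendingmodulestructure}) inverts it on hom categories, using unitality of the $e_i$ to identify this tensor product with $m$. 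The equivalence $\Kar(\cC) \simeq \cC^\triangledown$ is then a composite of two equivalences, each of which is a fully faithful functor shown to be essentially surjective.

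What this buys over your approach: the coherence obstacle you flag in your final paragraph largely evaporates. You never have to check that a constructed $G$ respects composition or that $FG$ and $GF$ are coherently equivalent to identities, because full faithfulness plus essential surjectivity is all that is needed, and full faithfulness is either automatic (for a full subcategory) or checked levelwise on hom categories via an explicit inverse. Your direct two-functor approach would work, but the packaging via $\Kar^{\mathrm{un}}(\cC)$ is what makes the proof short. One small correction: a separable monad does not come \emph{equipped} with a bimodule splitting (only its existence is required), so your $G$ involves a choice; Lemma~\ref{lemma.extendingmonadstructure} is indeed what makes this choice immaterial, as you say.
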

Note that separable monads correspond to separable adjunctions, and so are automatically unital. The name $\cC^\triangledown$ for the latter category comes from \cite{Reutter2018}.

\begin{proof}
  Let $\Kar^{\mathrm{un}}(\cC)$ denote the full sub-$n$-category of $\Kar(\cC)$ on the objects represented by unital condensation monads. I.e.\ an object of $\Kar^{\mathrm{un}}(\cC)$ is a unital condensation monad, and a $1$-morphism is a condensation bimodule. 
  Under the condition that all endomorphism categories in $\cC$ are rigid, Theorem~\ref{thm.unitalization} implies that
  the inclusion $\Kar^{\mathrm{un}}(\cC) \to \Kar(\cC)$ is essentially surjective, and so an equivalence. 
  
  We will show that the forgetful functor $\Kar^{\mathrm{un}}(\cC) \to \cC^\triangledown$ which forgets the comultiplication is also an equivalence.
  (That this functor is defined follows from Remark~\ref{remark.unitalmodule}, which showed that condensation bimodules between unital condensation monads automatically respect the units.)
   It is essentially surjective by definition of $\cC^\triangledown$, and so it suffices to show that $\Kar^{\mathrm{un}}(\cC) \to \cC^\triangledown$ induces equivalences on hom categories, i.e.\ that, given unital condensation monads $e_1$ and $e_2$, the forgetful functor $\{$condensation $e_1$-$e_2$-bimodules$\} \to \{$unital associative $e_1$-$e_2$-bimodules$\}$ is an equivalence. But suppose $m$ is a unital associative $e_1$-$e_2$-bimodule. Then, as in the proof of Lemma~\ref{lemma.extendingmodulestructure}, construct the condensation bimodule $e_1 \otimes_{e_1} m \otimes_{e_2} e_2$. Because $e_1$ and $e_2$ are unital, this tensor product is isomorphic (as a unital associative bimodule) to $m$. Thus the functor $m \mapsto e_1 \otimes_{e_1} m \otimes_{e_2} e_2$ is an inverse to the forgetful functor $\{$condensation $e_1$-$e_2$-bimodules$\} \to \{$unital associative $e_1$-$e_2$-bimodules$\}$, completing the proof.
\end{proof}

Recall from Example~\ref{example.sigmadefn} that, for a symmetric monoidal $n$-category $\cC$, there is a natural symmetric monoidal $(n+1)$-category $\Sigma\cC = \Kar(\rB \cC)$ and an equivalence $\cC \cong \Omega\Sigma\cC := \End_{\Sigma\cC}(1)$.

\begin{corollary} \label{cor.sigmavec}
  Let $\Vect$ denote the category of finite-dimensional vector spaces over a field~$\bK$. Then $\Sigma\Vect$ is equivalent to the $2$-category whose objects are finite-dimensional separable unital associative $\bK$-algebras, whose $1$-morphisms are finite-dimensional (unital associative) bimodules, and whose $2$-morphisms are homomorphisms of bimodules. \qed
\end{corollary}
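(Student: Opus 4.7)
The plan is to obtain the corollary as a direct application of Theorem~\ref{thm.reutter} to the $2$-category $\cC = \rB\Vect$, after verifying its hypotheses and unpacking the notion of separable monad in this setting.

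First I would check that $\rB\Vect$ satisfies the hypotheses of Theorem~\ref{thm.reutter}. Since $\rB\Vect$ has one object $\bullet$, its only hom $1$-category is $\End(\bullet) = \Vect$. This category has all condensates because every idempotent on a finite-dimensional vector space splits into its image and kernel. Moreover, $\Vect$ is $1$-rigid as a monoidal category: every finite-dimensional vector space $V$ has a dual $V^*$, providing both left and right duals (in fact the same, since $\Vect$ is symmetric monoidal). Thus Theorem~\ref{thm.reutter} gives an equivalence $\Kar(\rB\Vect) \simeq (\rB\Vect)^\triangledown$.

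Next I would unpack the definition of $(\rB\Vect)^\triangledown$. An object is a separable monad in $\rB\Vect$, which restricts on the unique object $\bullet$ to a unital associative monad $e \in \End(\Vect)(\bullet,\bullet) = \Vect$ equipped with a comultiplication $\Delta : e \to e \otimes e$ that is a bimodule section of the multiplication $m : e \otimes e \to e$ (this is the shadow, on the $X$-side, of the condition $\phi\gamma = \id$ in a separable adjunction). Concretely, this is a finite-dimensional unital associative $\bK$-algebra $A = e$ together with a separability idempotent $\Delta(1) \in A \otimes A$ satisfying $m(\Delta(1)) = 1$ and $a \Delta(1) = \Delta(1) a$, which is precisely the classical definition of a finite-dimensional separable $\bK$-algebra. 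The $1$-morphisms in $(\rB\Vect)^\triangledown$ are bimodules in the ordinary unital associative sense, which here are finite-dimensional $A$-$B$-bimodules, and $2$-morphisms are bimodule maps.

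The main obstacle, if any, is the identification of separable monads in $\rB\Vect$ with finite-dimensional separable $\bK$-algebras, in particular making precise that the comultiplication $\Delta$ in the definition of a separable monad is automatically a bimodule map and matches the usual separability idempotent. This is essentially a direct unpacking, with the bimodule property following from the coherence data present in the walking separable monad (the analog of Proposition~\ref{prop.condalg} for separable adjunctions). All other steps are standard: once the identification of objects is made, $1$- and $2$-morphisms translate directly, because a condensation bimodule between unital condensation monads is automatically unital by Remark~\ref{remark.unitalmodule}, and Theorem~\ref{thm.reutter} already reconciles condensation bimodules with unital associative bimodules.
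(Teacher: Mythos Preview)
Your proposal is correct and follows exactly the approach the paper intends: the corollary is stated immediately after Theorem~\ref{thm.reutter} with only a \qed, so it is meant as a direct specialization to $\cC = \rB\Vect$, which is precisely what you do. Your verification of the hypotheses (idempotents split in $\Vect$, and $\Vect$ is rigid) and your identification of separable monads in $\rB\Vect$ with finite-dimensional separable $\bK$-algebras are the expected unpacking; the only small imprecision is that in Definition~\ref{defn.sepadj} the section $\gamma$ is a property rather than data, so a separable monad is a unital associative algebra \emph{admitting} a bimodule section of multiplication, but this does not affect the conclusion.
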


We expect essentially the same result one dimension up, but we did not check the details:

\begin{conjecture}\label{conj.sigma2vec}
  $\Sigma^2\Vect$ is equivalent to the $3$-category, studied in \cite{DSPS,DSPS2}, whose objects are separable finite tensor categories and whose $1$-morphisms are their separable bimodule categories.
\end{conjecture}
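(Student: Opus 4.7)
The plan is to combine Theorem~\ref{thm.reutter} with Corollary~\ref{cor.sigmavec}. I would begin by applying Theorem~\ref{thm.reutter} to $\cC = \rB\Sigma\Vect$. The hypotheses hold: the unique endomorphism $2$-category is $\Sigma\Vect$ with its monoidal structure $\otimes_\bK$, which has all condensates by Theorem~\ref{thm.Kar}, and is $1$-rigid because any finite-dimensional bimodule $M$ between finite-dimensional separable $\bK$-algebras admits its linear dual $M^\ast = \hom_\bK(M,\bK)$ as both left and right adjoint. The theorem thus identifies
\[
  \Sigma^2\Vect \;=\; \Kar(\rB\Sigma\Vect) \;\simeq\; (\rB\Sigma\Vect)^{\triangledown},
\]
the $3$-category whose objects are separable (unital, with a chosen separability section) monads in $\rB\Sigma\Vect$ and whose hom $2$-categories consist of unital associative bimodules.

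Next I would unpack these separable monads. A monad on the unique object of $\rB\Sigma\Vect$ is an algebra object in the monoidal $2$-category $(\Sigma\Vect,\otimes_\bK)$: the underlying $1$-morphism is a separable $\bK$-algebra $A$, and the multiplication is an $(A \otimes A, A)$-bimodule equipped with coherent associativity and unit data. Using Corollary~\ref{cor.sigmavec} together with the Deligne-product identification $(A \otimes A)\text{-mod} \simeq A\text{-mod} \boxtimes A\text{-mod}$, this multiplication bimodule corresponds to a $\bK$-bilinear functor $A\text{-mod} \times A\text{-mod} \to A\text{-mod}$, and the coherence data present $\cA := A\text{-mod}$ as a finite semisimple $\bK$-linear monoidal category. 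The separability condition of Definition~\ref{defn.sepadj} then demands that the multiplication $\otimes : \cA \boxtimes \cA \to \cA$ admit a section of its counit, which is exactly the DSPS notion of a \emph{separable} tensor category; this in turn forces rigidity of $\cA$, because the right adjoint of $\otimes$ together with the separability section furnishes the dualizing data for each object.

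I would treat the $1$-morphisms in parallel: a unital associative bimodule between two separable monads corresponds, under the same Cauchy-completion/Deligne-product dictionary, to a finite semisimple $(\cA,\cB)$-bimodule category, and its separability as a bimodule in $(\rB\Sigma\Vect)^{\triangledown}$ unpacks into the DSPS notion of a separable bimodule category. The $2$- and $3$-morphisms are identified directly with bimodule functors and their natural transformations, with no further conditions to check.

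The main obstacle, and the reason the statement is left as a conjecture, is the bookkeeping of these comparisons. One must match our relative tensor products of condensation bimodules (Definition~\ref{defn.tensorproduct}) with the relative Deligne products $\boxtimes_\cA$ of \cite{DSPS,DSPS2}; check that their definition of separability for tensor categories and bimodule categories coincides on the nose with ours after passing from separable algebras to their module categories; and reconcile the fact that objects of $\Sigma\Vect$ are always finite semisimple with the DSPS framework, which over certain base fields may allow non-semisimple finite tensor categories. None of these steps is conceptually hard, but carrying them out rigorously requires a careful unwinding of two sets of conventions, which is why we state the comparison only as a conjecture.
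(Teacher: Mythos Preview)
The statement is recorded in the paper as a \emph{conjecture}; the paper offers no proof, only the remark ``We expect essentially the same result one dimension up, but we did not check the details.'' So there is no argument in the paper to compare yours against. Your overall strategy---apply Theorem~\ref{thm.reutter} to $\rB\Sigma\Vect$ and then translate separable monads and their bimodules into the DSPS language via Corollary~\ref{cor.sigmavec}---is exactly the route the paper's structure suggests, and your closing paragraph correctly identifies why the authors stopped short of claiming a proof.

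That said, two of your intermediate claims are not innocuous bookkeeping and would need real work. First, your assertion that separability of the monad ``forces rigidity of $\cA$'' is doing a lot of heavy lifting: in the DSPS framework, rigidity is part of the \emph{definition} of a finite tensor category, not a consequence of separability, and it is not clear that every separable algebra object in $(\Sigma\Vect,\otimes_\bK)$ has a rigid underlying monoidal category (think of $\Vect[M]$ for $M$ a finite monoid that is not a group). You would either need to prove this, or show that the non-rigid separable algebras are Morita equivalent to rigid ones. Second, you speak of ``its separability as a bimodule in $(\rB\Sigma\Vect)^{\triangledown}$,'' but by Theorem~\ref{thm.reutter} the $1$-morphisms in $\cC^{\triangledown}$ are merely unital associative bimodules with no separability imposed; you must argue that separability of the bimodule category is automatic (perhaps from Karoubi-completeness of the hom $2$-categories, or from the ambient rigidity), not assume it. These are precisely the sort of ``details'' the authors declined to check, and they are more than notational reconciliation.
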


Over a field of characteristic zero, the separable finite tensor categories are precisely the multifusion (i.e.\ semisimple tensor) categories, and a bimodule category is separable if and only if it is semisimple.

\section{Absolute limits and full dualizability}\label{sec.ab}

The goal of this final section is to generalize to $n$-categories the following two well-known results about Karoubi envelopes of $1$-categories:
\begin{enumerate}
  \item If $\cC$ is a $1$-category, then $\Kar(\cC)$ is the closure of $\cC$ under absolute limits, also called the  \define{Cauchy completion} of $\cC$. (A limit is \define{absolute} if limits of that shape are preserved by arbitrary functors.) \label{item.motivation1}
  \item Given a commutative ring $R$, denote by $\Mat(R)$ the category of finitely generated free $R$-modules. Then $\Kar(\Mat(R))$ is naturally equivalent to the category of dualizable $R$-modules. \label{item.motivation2}
\end{enumerate}
By generalizing (\ref{item.motivation1}) to $n$-categories, we fully justify condensations as the ``correct'' categorification of split surjections. By generalizing (\ref{item.motivation2}) we complete our comparison between gapped phases of matter and TQFTs.

\subsection{Full dualizability}\label{subsec.fulldual}

The following notions are by now quite standard in higher category theory. Let $\cC$ be an $n$-category and $f : X \to Y$ a $k$-morphism. 
A \define{right adjoint} of $f$ is a morphism $f^R : Y \to X$ together with \define{unit} and \define{counit} $(k+1)$-morphisms $\eta:  \id_X \Rightarrow f^R f$ and $\epsilon : ff^R \Rightarrow \id_Y$ such that the ``zig-zag'' compositions
\begin{gather*}
 f \overset{f\eta}\Longrightarrow ff^R f \overset{\epsilon f}\Longrightarrow f,\\
 f^R \overset{\eta f^R}\Longrightarrow f^R f f^R \overset{f^R \epsilon}\Longrightarrow  f^R,
\end{gather*}
are identities.
Similarly, a \define{left adjoint} of $f$ is a morphism $f^L : Y \to X$ together with $(k+1)$-morphisms $\eta : \id_Y \Rightarrow ff^L$ and $\epsilon : f^L f \Rightarrow \id_X$ such that the ``zig-zag'' compositions
\begin{gather*}
 f \overset{\eta f}\Longrightarrow ff^L f \overset{f\epsilon}\Longrightarrow f,\\
 f^L \overset{f^L\eta}\Longrightarrow f^L f f^L \overset{\epsilon f^L}\Longrightarrow f^L,
\end{gather*}
are identities. 

\begin{remark}
  There are various, ultimately equivalent, ways to implement the above definition, and in particular to interpret the phrase  ``such that (\dots) are identities,'' in higher categories. The version of adjunction given in \cite[Definition 2.3.13]{Lur09} simply asks for the existence of equivalences between the given compositions and the appropriate identities --- in other words, in this version the adjoints $f^R$ and $f^L$ of $f$ are defined in terms of the homotopy bicategory of $\cC$. (In fact, to show adjunctibility, it suffices to work in the gauntification of the homotopy bicategory of $\cC$ \cite[Lemma~7.9]{JFS}.)
  
  A fully coherent theory of adjunction is given in \cite{MR3415698}. The desideratum for a fully coherent theory is that the following well-known fact from 2-categories should remain true: if a left or right adjoint exists, then it should be unique up to a contractible space of choices. One way to achieve this is to ask, when witnessing a right adjoint $f^R$ to $f$, for the data of an equivalence $(\epsilon f)\circ(f\eta) \overset\sim \Rightarrow \id_f$ while asking $(f^R\epsilon)\circ(\eta f^R)$ to have the property of being equivalent to an identity. Or one can ask to supply both equivalences, but then one must also supply further coherence data relating them.
\end{remark}

It can happen that both $f^L$ and $f^R$ exist but that they are not isomorphic; even if they are isomorphic, it can happen that there is not a canonical isomorphism.
Clearly $(f^L)^R = (f^R)^L = f$. The notation ``$f \dashv f^R$'' means ``$f^R$ is a right adjoint to $f$.''
If $f,g : X \to Y$ both admit right adjoints, then the \define{mate} of a $(k+1)$-morphism $\phi : f \Rightarrow g$ is the composition
$$ g^R \overset{\eta_f g^R} \Longrightarrow f^R f g^R \overset{f^R \phi g^R}\Longrightarrow f^R g g^R \overset{f^R \epsilon_g}\Longrightarrow f^R,$$
where $\eta_f$ is the unit of the adjunction $f \dashv f^R$ and $\epsilon_g$ is the counit of the adjunction $g \dashv g^R$.

Suppose that $\cC$ is a monoidal $n$-category. The left and right \define{duals} to an object $X\in\cC$, if they exist, are the left and right adjoints to $X$ thought of as a $1$-morphism in the one-object $(n+1)$-category $\rB \cC$. A monoidal $1$-category is traditionally called \define{rigid} if all objects admit both left and right duals. Extending this notation, we will say that an $(n+1)$-category is \define{$m$-rigid} if all $k$-morphisms with $1\leq k \leq m$ admit both left and right adjoints. A monoidal $n$-category $\cC$ is \define{$m$-rigid} if the $n$-category $\rB \cC$ is $m$-rigid, i.e.\ if all $k$-morphisms in $\cC$ with $0\leq k < m$ admit (duals or) adjoints. An $(n+1)$-category or monoidal $n$-category is \define{fully rigid} if it is $n$-rigid, this being the maximal amount of rigidity a nontrivial $(n+1)$-category may enjoy --- an $(n+1)$-category is $(n+1)$-rigid if and only if all morphisms are invertible.

\begin{theorem}\label{thm.fullrigidity}
  Let $\cC$ be an $(n+1)$-category all of whose hom-$n$-categories have all condensates. If $\cC$ is $m$-rigid, then so is $\Kar(\cC)$. In particular, if $\cC$ is a fully rigid symmetric monoidal $n$-category with all condensates, then the symmetric monoidal $(n+1)$-category $\Sigma \cC$ is also fully rigid.
\end{theorem}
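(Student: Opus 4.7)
The plan is to prove the first statement by downward induction on the level $k \in \{1,\dots,m\}$, establishing that $k$-morphisms in $\Kar(\cC)$ admit both left and right adjoints. The $k=1$ case will be the main content; the higher-level cases are essentially bookkeeping, and the ``In particular'' statement will follow formally. I describe right adjoints only; left adjoints are handled symmetrically.

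For $k\geq 2$, a $k$-morphism of $\Kar(\cC)$ unwinds (via Theorem~\ref{thm.Kar} and Remark~\ref{remark.condbimod}) to a $k$-morphism $\phi$ of $\cC$ equipped with intertwining data encoding compatibility with the ambient condensation bimodule structures. By $m$-rigidity of $\cC$ the underlying $\phi$ has a right adjoint $\phi^R$ in $\cC$; the intertwining data transfers to $\phi^R$ by the mate construction recalled in \S\ref{subsec.fulldual}, since mating is functorial and carries commuting diagrams of intertwiners to commuting diagrams of intertwiners. The unit and counit of $\phi\dashv\phi^R$ in $\cC$ are themselves intertwining (by the same mate argument one level up, handled by the inductive step), hence live in $\Kar(\cC)$, exhibiting the adjunction there.

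The crux is the $k=1$ case. Given a condensation bimodule $m : (X_1,e_1,\dots) \to (X_2,e_2,\dots)$, I would mimic the construction of $f^R$ in Theorem~\ref{thm.unitalization}. Let $\tilde m$ denote the underlying $1$-morphism of $m$ in $\cC$, and let $\tilde m^R$ be a right adjoint to $\tilde m$ in $\cC$, available by $1$-rigidity. Mating the left $e_2$- and right $e_1$-actions on $\tilde m$ using the adjunctions $e_i \dashv e_i^R$, together with the algebra/coalgebra data on each $e_i$, yields appropriate commuting compatibility data on $\tilde m^R$; the desired right adjoint in $\Kar(\cC)$ is then the condensate of a naturally defined condensation monad on $e_1 \circ \tilde m^R \circ e_2$, directly generalizing diagram~(\ref{eqn.unitalization}). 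The unit and counit of $m\dashv m^R$ in $\Kar(\cC)$ are string-diagram composites of the $\cC$-level unit/counit of $\tilde m \dashv \tilde m^R$ with the various (co)multiplications and (co)actions; the triangle identities reduce, using Frobenius and specialness (Example~\ref{eg.specialfrob} and Remark~\ref{remark.special}), to the triangle identities of the ambient $\cC$-level adjunction, in direct analogy with the end of the proof of Theorem~\ref{thm.unitalization}.

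The main obstacle is the coherence bookkeeping in the $k=1$ step: one must check that the mated data genuinely assemble into a condensation bimodule, respecting the full tower of compatibilities from Proposition~\ref{prop.condalg} and Remark~\ref{remark.condbimod}, and not merely a nonunital associative bimodule. I would handle this uniformly by working in a ``walking'' $1$-rigid condensation bimodule---the $(n+1)$-category freely generated by a condensation bimodule together with adjoints to its $1$-morphisms---where the required coherences are tautological; the construction then transports along any functor to any $1$-rigid target, with higher coherences pinned down up to contractible choice by Theorem~\ref{thm.uniqueness} and the already-handled $k\geq 2$ step. Finally, the ``In particular'' clause is immediate: $\rB\cC$ is $n$-rigid whenever $\cC$ is fully rigid (since $k$-morphisms of $\rB\cC$ for $k\geq 1$ are $(k-1)$-morphisms of $\cC$) and symmetric monoidal whenever $\cC$ is, so the first part of the theorem applied with $m=n$ gives full rigidity of $\Sigma\cC = \Kar(\rB\cC)$, and the symmetric monoidal structure transports through $\Kar$ by the last sentence of Theorem~\ref{thm.Kar}.
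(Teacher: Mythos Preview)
Your approach to the $k\geq 2$ case, and the general shape of your $k=1$ argument via mates, is close in spirit to the paper's. However, for $k=1$ the paper takes a cleaner route: rather than directly generalizing the construction from the proof of Theorem~\ref{thm.unitalization}, it first \emph{applies} Theorem~\ref{thm.unitalization} to represent $Y_1$ by a \emph{unital} condensation monad and $Y_2$ by a \emph{counital} one. This yields adjoint pairs $f_1\dashv g_1$ and $g_2\dashv f_2$ in $\Kar(\cC)$ for free, so that $(f_1 m g_2)^R = f_2\, m^R\, g_1$ on the nose, and the two commuting condensation monads on $f_1 m g_2$ (whose composite condenses to $\mu$) transport by mating to two commuting condensation monads on $f_2\, m^R\, g_1$ whose condensate is $\mu^R$. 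This sidesteps much of the coherence bookkeeping you flag as the main obstacle; your ``walking $1$-rigid condensation bimodule'' device is not needed.

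More seriously, there is a genuine gap in your treatment of the ``In particular'' clause. Applying the first sentence with $m=n$ to the $(n+1)$-category $\rB\cC$ shows only that $\Kar(\rB\cC)=\Sigma\cC$ is $n$-rigid \emph{as an $(n+1)$-category}, i.e.\ that all $k$-morphisms for $1\leq k\leq n$ admit adjoints. But full rigidity of $\Sigma\cC$ \emph{as a symmetric monoidal $(n+1)$-category} means $(n+1)$-rigidity, which additionally requires that every \emph{object} of $\Sigma\cC$ be dualizable. This does not follow from the first sentence: $\rB\cC$ has only one object (the monoidal unit, trivially self-dual), whereas $\Sigma\cC$ has many, and the first sentence says nothing about dualizability of new objects created by Karoubi completion. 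The paper supplies this missing ingredient via Corollary~\ref{cor.dualizable}, proved independently: the dual of a condensation algebra in $\cC$ is its opposite condensation algebra, so every object of $\Sigma\cC$ is $1$-dualizable.
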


\begin{proof}
  Corollary~\ref{cor.dualizable}, which we will prove independently, implies that for any symmetric monoidal $n$-category $\cC$ with all condensates, every object in $\Sigma\cC$ is $1$-dualizable. (The dual to a condensation algebra in $\cC$ is its ``opposite'' condensation algebra.) The second sentence of the Theorem follows from this together with the first sentence: if $\cC$ is a fully rigid symmetric monoidal $n$-category, then $\rB \cC$ is a fully rigid symmetric monoidal $(n+1)$-category (and so all $k$-morphisms with $k\geq 1$ admit adjoints).
  
  We now turn to the first sentence of the Theorem. The main thing to show is that if $\cC$ is $1$-rigid, then so is $\Kar(\cC)$. Suppose  that $Y_1 \overset\mu\from Y_2$ is a $1$-morphism in $\Kar(\cC)$; we wish to construct its right adjoint $\mu^R$ (an analogous construction provides its left adjoint). By Theorem~\ref{thm.unitalization}, we may represent $Y_1$ by a unital condensation monad $(X_1,e_1,\dots)$ in $\cC$, and we may represent $Y_2$ by a counital condensation monad $(X_2,e_2,\dots)$ in $\cC$; with these choices made, $\mu$ will then be represented by an $e_1$-$e_2$-bimodule $m$. Let $f_i,g_i$ be the $1$-morphisms in $\Kar(\cC)$ presenting $Y_i$ as the condensate of $(X_i,e_i,\dots)$. The underlying $1$-morphism of $m$ is $g_1\mu f_2$. Furthermore, we have the following adjoint pairs in $\Kar(\cC)$: 
  $$ f_1 \dashv g_1, \qquad g_2 \dashv f_2, \qquad m \dashv m^R.$$
Including the morphism $m^R$ into diagram (\ref{eqn.bimod}), we find in $\Kar(\cC)$:
\begin{equation*}%\label{eqn.bimod}
\begin{tikzpicture}[baseline=(midpoint)]
  \path (0,0) node (X1) {$X_1$} +(0,-.35) coordinate (X1e) +(0,-2) node (Y1) {$Y_1$} +(0,-1) coordinate (midpoint);
  \draw[->] (X1) .. controls +(-.75,-.75) and +(-.75,.75) ..  node[auto,swap] {$\scriptstyle  f_1$} (Y1);
  \draw[->] (Y1) .. controls +(.75,.75) and +(.75,-.75) .. node[auto] {$\scriptstyle g_1$} (X1);
%  \draw[doublecondensation] (X1e) -- (Y1);
  \path (3,0) node (X2) {$X_2$} +(0,-.35) coordinate (X2e) +(0,-2) node (Y2) {$Y_2$};
  \draw[->] (X2) .. controls +(-.75,-.75) and +(-.75,.75) ..  node[auto] {$\scriptstyle f_2$} (Y2);
  \draw[->] (Y2) .. controls +(.75,.75) and +(.75,-.75) .. node[auto,swap] {$\scriptstyle g_2 $} (X2);
%  \draw[doublecondensation] (X2e) -- (Y2);
  \draw[->] (Y2) -- node[auto] {$\scriptstyle \mu$} (Y1);
  \path 
    (X2) +(-.5,-.25) coordinate (X2m)
    (Y2) +(-.5,.25) coordinate (Y2m)
    (X1) +(.5,-.25) coordinate (X1m)
    (Y1) +(.5,.25) coordinate (Y1m)
    ;
  \draw[->] (X2m) .. controls +(-.5,-.5) and +(-.5,.5) .. (Y2m) -- node[auto,swap] {$\scriptstyle m$} (Y1m) .. controls +(.5,.5) and +(.5,-.5) .. (X1m);
  \draw[->] (X1) -- node[auto] {$\scriptstyle m^R$} (X2);
  \path (1.5,-1) node {$\Leftarrow$} (-.125,-1) node {$\Downarrow$} (3.125,-1) node {$\Uparrow$};
\end{tikzpicture}
\end{equation*}
The $\Rightarrow$s stand in for both the units and counits of the various adjunctions.
  
  By construction, $f_1 m g_2 = f_1 g_1 \mu f_1 g_2$ carries two commuting condensation monads, whose product is a condensation monad on $f_1 m g_2$ with condensate $\mu$. By taking their mates, these determine commuting condensation monads supported by $(f_1 m g_2)^R = g_2^R m^R f_1^R = f_2 m^R g_1$. The condensate of their product is the right adjoint $\mu^R$ which we set out to construct.
  
  Finally, we need to show that if $k$-morphisms in $\cC$ admit adjoints for $k\geq 2$, then so do $k$-morphisms in $\Kar(\cC)$. A $k$-morphism in $\Kar(\cC)$ for $k\geq 2$ is a condensation bimodule morphism, which is a type of $(k-1)$-morphism in the category of functors from the walking condensation bimodule to $\cC$, which is a condensation bimodule in the category of $(k-1)$-morphisms in $\cC$. Thus the claim follows from what we have already proven because $k$-rigidity of $\cC$ implies $1$-rigidity of the category of
$(k-1)$-morphisms in $\cC$. (Compare \cite[Section~7]{JFS}, where dualizability and adjunctibility questions in categories of morphisms are studied in detail. Alternately, note that the right adjoint to any type of bimodule morphism is, by standard general nonsense, automatically a ``lax bimodule morphism'' in the sense that compatibility between the (co)actions and the morphism holds only up to not-necessarily-invertible higher morphisms; but in the presence of sufficient rigidity, lax bimodule morphisms are automatically strong \cite[Lemma 2.10]{DSPS2}.)
\end{proof}

By appealing to the Cobordism Hypothesis \cite{BaeDol95,Lur09}, we find:

\begin{corollary}\label{cor.tqft}
  Each object of $\Sigma^d \Vect_\bC$ determines a $(d+1)$-dimensional framed extended TQFT. Each $k$-morphism determines a codimension-$k$ defect (aka domain wall; see \cite[Example 4.3.23]{Lur09}) between framed extended TQFTs. The same statement holds with $\Vect_\bC$ replaced by $\SVect_\bC$, $\cat{Rep}(G)$, etc. \qed
\end{corollary}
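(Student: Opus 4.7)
The proof is a direct application of Theorem~\ref{thm.fullrigidity} and the Cobordism Hypothesis of \cite{BaeDol95,Lur09}. First I would check the base case: $\Vect_\bC$ is a fully rigid symmetric monoidal $1$-category (every finite-dimensional vector space admits a two-sided dual) with all condensates (idempotents split in any abelian category). The same verification applies verbatim to $\SVect_\bC$ and, for a suitable group~$G$, to $\cat{Rep}(G)$, so these variant base cases require no separate argument.

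Next I would iterate. Suppose inductively that $\Sigma^{d-1}\Vect_\bC$ is a fully rigid symmetric monoidal $d$-category with all condensates. By definition $\Sigma^d \Vect_\bC = \Kar(\rB\, \Sigma^{d-1}\Vect_\bC)$; Theorem~\ref{thm.Kar} provides all condensates, and Theorem~\ref{thm.fullrigidity} provides full rigidity of the resulting symmetric monoidal $(d+1)$-category. In particular every object of $\Sigma^d \Vect_\bC$ is fully dualizable (full rigidity is precisely the statement that every object admits the complete tower of iterated duals and adjoints), and every $k$-morphism admits both left and right iterated adjoints.

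The Cobordism Hypothesis then identifies fully dualizable objects of any symmetric monoidal $(d+1)$-category with framed extended $(d+1)$-dimensional TQFTs valued in that category, yielding the first assertion. For the statement about $k$-morphisms and defects, I would invoke the stratified version of the Cobordism Hypothesis (as formulated in \cite{Lur09}), under which a $k$-morphism having sufficiently many iterated adjoints classifies a codimension-$k$ defect between such TQFTs; full rigidity of $\Sigma^d \Vect_\bC$ supplies exactly those adjoints. The only real obstacle here is citational: the defect assertion requires the substantially more involved stratified version of the Cobordism Hypothesis rather than the original closed-manifold version. Since the Cobordism Hypothesis is invoked as a black box throughout the paper, we need not unpack this further.
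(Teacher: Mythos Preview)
Your proposal is correct and matches the paper's own argument: the corollary is stated immediately after Theorem~\ref{thm.fullrigidity} with only the sentence ``By appealing to the Cobordism Hypothesis'' and a \qed, so the intended proof is precisely the induction on full rigidity plus the black-box invocation of the Cobordism Hypothesis that you spell out. Your additional remarks on the base case and the stratified version needed for defects are appropriate elaborations of what the paper leaves implicit.
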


 Together with Theorem~\ref{thm.Hamiltonian}, we find a very tight relationship between TQFTs and gapped condensed matter systems.

\subsection{Absolute colimits}\label{subsec.abcolim}

The theory of (co)limits in $n$-categories is subtle \cite{MR4519634}, and much work remains to be done. Our understanding is that Markus Zetto is currently in the process of developing a rigorous theory of absolute higher (co)limits; Zetto plans to rigorously prove the results sketched below, whereas we will take some assumptions about higher colimits for granted.

Very roughly, to say that some object $Y$ in an $n$-category $\cC$ is a \define{colimit} is to give a natural-in-$Z$ identification between morphisms $Y \to Z$ and some list of data of the form:
\begin{itemize}
  \item $1$-morphisms $x_i : X_i \to Z$, for some list of objects $X_i$;
  \item $2$-morphisms $\xi_j$ whose source and target $1$-morphisms are built by composing the $x_i$s with $1$-morphisms between the $X_i$s;
  \item $3$-morphisms whose source and target $2$-morphisms are built by composing the $\xi_j$s  with $2$-morphisms between the $1$-morphisms between the $X_i$s;
  \item and so on until $n$-morphisms;
  \item satisfying equations between those $n$-morphisms.
\end{itemize}
A \define{limit} is the same, except that one identifies morphisms $Z \to Y$ in terms of morphisms $Z \to X_i$ plus higher data. We will write $\cX = (X_i,\dots)$ for these data, and $Y = \colim \cX$ for its colimit. Given such data $\cX$, such a $Y$ is unique (up to a contractible space of choices) if it exists.

Suppose that $\cX = (X_i,\dots)$ is the data describing a colimit in some $n$-category $\cC$, and let $Y \in \cC$ be the corresponding colimit object. Suppose that $F : \cC \to \cD$ is a functor of $n$-categories. Then there is natural data $F(\cX)$ describing a colimit in $\cD$. The colimit of $F(\cX)$ in $\cD$, if it exists, may not be $F(Y)$. Rather, the universal description of maps $\colim F(\cX) \to Z$ implies that there is a canonical map $\colim F(\cX) \to F(\colim \cX)$ in $\cD$, which may not be an isomorphism. 
(If we had used limits instead, then there would be a canonical map $F(\lim \cX) \to \lim F(\cX)$.)
If it is an isomorphism, then $F$ is said to \define{preserve} the colimit described by $\cX$.
 The data $\cX$ describes an \define{absolute colimit} if the colimit described by $\cX$ is preserved by all functors.

\begin{example}\label{eg.colimitscondensation}
  Our main example is the condensate of a condensation monad. Indeed, suppose that $(X,e,\dots)$ is a condensation monad with condensate $Y$. The proof of Theorem~\ref{thm.uniqueness} identifies morphisms $Y \to Z$ and morphisms $Z \to Y$ with (right or left) condensation $e$-modules, and the data of condensation $e$-module is exactly of the form described above. So $Y$ is both a colimit and a limit.
  Furthermore, since
   functors take condensations to condensations, ``taking the condensate'' is absolute, both as a colimit and as a limit.
\end{example}

The $1$-categorical theory of colimits is well-developed and well-known.
For $2$-categories, the theory of colimits is less well-known, but it is well-developed (see~\cite{nLab-2limit} and references therein; note that though that the subtleties raised in \cite{MR4519634}  appear already in dimension $2$). Our imagined $n$-categorical colimits include all variants of $2$-categorical colimits, including what are usually called ``lax'' and ``oplax'' colimits --- the different variants restrict in different ways the allowed sources and targets of the $2$-morphisms $\xi_j$.

When $n>2$, the theory of colimits in $n$-categories is not, as of the time of this writing, particularly well-developed. 
The best way to do so will be to recognize that a weak $n$-category is the same as an $(\infty,1)$-category enriched in the $(\infty,1)$-category of weak $(n-1)$-categories \cite{MR2883823,MR3345192}. In the world of strict $1$-categories there is a well-developed notion of enriched, also called weighted, colimits \cite{MR2177301}. There will eventually be an analogous theory of enriched colimits in the $(\infty,1)$-world, and $n$-categorical colimits will be a main example. 

The $1$-categorical versions of the following facts provide the foundations of the theory of $1$-categorical colimits by recasting colimits in terms of Yoneda theory. Our philosophy will be to assume that they hold in the eventual theory of $n$-categorical colimits, building on the higher Yoneda theory developed by \cite{MR4080581}. In the meantime, we will take fact (\ref{fact.tiny}) as our definition of \define{Cauchy completion}.

\begin{enumerate}
  \item \label{fact.psh} Let $\cC$ be an $n$-category. A \define{presheaf} on $\cC$ is a functor of $n$-categories from $\cC$ to the $n$-category $\Cat_{n-1}$ of $(n-1)$-categories. Presheaves form an $n$-category called $\Psh(\cC)$. There is a fully faithful inclusion $\yo : \cC \to \Psh(\cC)$, called the \define{Yoneda embedding}, sending the object $Z \in \cC$ to the presheaf $\yo(Z) : Y \mapsto \hom(Y,Z)$. (The letter $\yo$ is the first letter of ``Yoneda'' when written in hiragana; its use for the Yoneda embedding was first suggested in \cite{JFS}.)
  For any presheaf $T$, there is a natural equivalence $\hom(\yo(X),T) = T(X)$.
  
  The Yoneda embedding presents $\Psh(\cC)$ as the universal closure of $\cC$ under colimits. Specifically, $\Psh(\cC)$ contains all  colimits, and if $\cD$ is any category containing all colimits, then functors $\cC \to \cD$ can be identified with colimit-preserving functors $\Psh(\cC) \to \cD$, where the identification takes the colimit-preserving functor $F : \Psh(\cC) \to \cD$ to the functor $F \circ \yo : \cC \to \cD$.
  
  \item \label{fact.tiny} A presheaf $S \in \Psh(\cC)$ is called \define{tiny} if the functor $\hom(S,-) : \Psh(\cC) \to \Cat_{n-1}$ preserves all colimits. (The name is due to \cite{MR3361309}, based on various ``small object'' notions in the category-theory literature.) The image of $\yo$ consists entirely of tiny objects, because if $(T_i,\dots)$ describes a colimit in $\Psh(\cC)$, then one can show directly that the presheaf $C \mapsto \colim (T_i(C),\dots)$ is its colimit.
  
  Write $\Cau(\cC)$ for the full sub-$n$-category of $\Psh(\cC)$ on the tiny objects. Then $\Cau(\cC)$ is the universal closure of $\cC$ under absolute colimits in the sense that if $\cD$ is any $n$-category containing all absolute colimits, then functors $\cC \to \cD$ and functors $\Cau(\cC)\to\cD$ are naturally identified. The $n$-category $\Cau(\cC)$ is called the \define{Cauchy completion} of $\cC$. (The name is due to \cite{MR0352214} and is based on an analogy between enriched categories and metric spaces.)
%  The Cauchy completion $\Cau(\cC)$ is also the universal closure of $\cC$ under absolute limits.
\end{enumerate}

Even without a complete description of $n$-categorical colimits, we claim:

\begin{theorem}\label{thm.absolute}
  Let $\cC$ be an $n$-category all of whose hom-$(n-1)$-categories have all condensates. Then $\Kar(\cC)$ and $\Cau(\cC)$ agree.
\end{theorem}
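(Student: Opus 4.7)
The plan is to produce mutually inverse equivalences between $\Kar(\cC)$ and $\Cau(\cC)$ by leveraging the universal property of each. First, by Example~\ref{eg.colimitscondensation} every condensate is an absolute colimit, so $\Cau(\cC)$ has all condensates; hence the Yoneda embedding $\yo : \cC \to \Cau(\cC)$ extends, by the universal property of $\Kar$ from Theorem~\ref{thm.Kar}, to a functor $J : \Kar(\cC) \to \Cau(\cC)$. I would check that $J$ is fully faithful essentially by unpacking definitions: morphisms between condensates in both categories are described by the same condensation-bimodule data, since in $\Cau(\cC)$ the absolute-colimit universal property of a condensate identifies $\hom(\colim \yo(X_i),-)$ with the appropriate $\lim$ of $\hom(\yo(X_i),-)$, which can be computed ``levelwise'' via the Yoneda lemma and matches the description of morphisms in $\Kar(\cC)$ coming from Theorem~\ref{thm.Kar}.

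The real work is essential surjectivity. I would show that every tiny presheaf $S \in \Psh(\cC)$ arises as the condensate of a condensation monad on a representable. First present $S$ canonically as a colimit of representables via the Yoneda construction. Tininess means $\hom(S,-)$ preserves this colimit; applied to $\id_S \in \hom(S,S)$, this produces, for some object $X$ of the indexing diagram, a $1$-morphism $g : S \to \yo(X)$ together with an equivalence $f \circ g \simeq \id_S$, where $f : \yo(X) \to S$ is the corresponding leg of the universal cocone. In the $1$-categorical case this immediately realizes $S$ as the splitting of the idempotent $g \circ f$ on $\yo(X)$. For general $n$, I would proceed by induction: at each level, tininess of $S$ is used not just to factor $\id_S$ as $fg$ but to lift each piece of the combinatorial data of Proposition~\ref{prop.condalg} (the ``cubes'' of coherences) through the absolute-colimit presentation, producing the full tower making $(\yo(X), g\circ f, \dots)$ into a condensation monad in $\Cau(\cC)$ whose condensate is $S$. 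By the Yoneda lemma this monad is canonically the image under $\yo$ of a condensation monad on $X \in \cC$, so $S$ lies in the image of $J$; Theorem~\ref{thm.uniqueness} guarantees uniqueness of the resulting condensate.

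The hard part will be making the inductive step rigorous in the absence of a fully developed theory of $n$-categorical enriched colimits. The key point to verify is that the higher-coherence data obtained by ``transporting'' $\id_S$ through the canonical absolute colimit presentation of $S$ really does assemble into a $\spadesuit$-shaped diagram rather than some a priori weaker structure. Fortunately, one does not need a general theory of absolute colimits for the argument: one only needs to extract condensation-shaped data, and the uniqueness results of \S\ref{subsec.uniquenessofcondensates} (in particular Theorem~\ref{thm.uniqueness}) pin down the extension at each stage of the induction from the data already produced. Modulo this foundational point, the theorem becomes the natural $n$-categorical lift of the classical fact that a presheaf is tiny if and only if it is a retract of a representable.
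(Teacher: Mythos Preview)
Your overall architecture matches the paper's: establish a fully faithful inclusion $\Kar(\cC) \hookrightarrow \Cau(\cC)$ using that condensates are absolute colimits, then show essential surjectivity by extracting a condensation $\yo(C) \condense S$ from the tininess of~$S$. The full-faithfulness half is fine.

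The gap is in how you use tininess for essential surjectivity. You write that tininess applied to $\id_S$ produces $g : S \to \yo(X)$ with an \emph{equivalence} $f g \simeq \id_S$, and then propose, for general $n$, to use induction to assemble further coherence data making $gf$ a condensation monad. This misidentifies both what tininess gives and what the induction is for. The equivalence $\hom(S,S) \simeq \colim_i \hom(S,\yo(X_i))$ is an equivalence of $(n-1)$-categories, and for $n>1$ an object of a colimit of $(n-1)$-categories need not lie in the essential image of any single leg, so you do not obtain $f,g$ with $fg \simeq \id_S$ at all. (And if you did, the condensation $\yo(X) \condense S$ would be immediate: no further coherence for $gf$ would be needed.)

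The paper's inductive step is different and is the heart of the argument. One argues that every object of a colimit $\colim \cY$ of $(n-1)$-categories arises as an \emph{absolute} $(n-1)$-categorical colimit of objects in the images of the $Y_i$; by the inductive hypothesis (the theorem one level down), such absolute colimits are precisely condensations. Applied to $\id_S \in \colim_i \hom(S,\yo(X_i))$, this yields some $g \in \hom(S,\yo(C))$ together with a condensation $fg \condense \id_S$ --- not an equivalence --- and that condensation \emph{is} the sought $\yo(C) \condense S$. There is no separate step of building coherences for $gf$: the $\spadesuit$-shaped data comes packaged with the inductive description of objects in the colimit.
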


\begin{proof}
  As mentioned already in Example~\ref{eg.colimitscondensation}, taking the condensate of a condensation monad is an example of an absolute colimit. Abstract nonsense then provides a fully faithful inclusion $\Kar(\cC) \mono \Cau(\cC)$. For definiteness, we will describe this inclusion in detail.
  Consider $T \in \Psh(\Kar(\cC))$ a presheaf on $\Kar(\cC)$. By construction, each object $Y \in \Kar(\cC)$ is the condensate of a condensation monad $(X,e,\dots) \in \cC$. Apply $T$ to this condensation $X \condense Y$; the result is a condensation $TX \condense TY$ in $\Cat_{n-1}$. By Theorem~\ref{thm.uniqueness}, this condensation, and in particular $TY$, is determined by the condensation monad $(TX,Te,\dots)$. Thus restriction along $\cC \mono \Kar(\cC)$ gives a map $\Psh(\Kar(\cC)) \to \Psh(\cC)$, which is in fact an equivalence:  $\Cat_{n-1}$ contains all condensates (Example~\ref{eg.ncat-condcomplete}), and so for any $T \in \Psh(\cC)$ extends uniquely to a presheaf on $\Kar(\cC)$; the extension sends 
  $Y = (X,e,\dots) \in \Kar(\cC)$ to the condensate of $(TX, Te, \dots)$.
  But the Yoneda embedding $\yo : \Kar(\cC) \mono \Psh(\Kar(\cC)) = \Psh(\cC)$ takes values within the subcategory  of tiny objects.
  
   To establish the converse inclusion, we must show that for each tiny presheaf $S \in \Cau(\cC)$, there is a condensation $\yo(C) \condense S$ for some $C \in \cC$.
   
   Every presheaf can be written as a colimit of (the images under $\yo$ of) objects in $\cC$ --- this is part of the universality described in (\ref{fact.psh}) above.  The universal way to do this is as follows. The list of objects $X_i$ contains many copies of $\yo(C)$ for each $C\in\cC$, one copy for each element  $f \in \hom(\yo(C),S) = S(C)$; the map $x_i : X_i \to S$ is this $f : \yo(C) \to S$. For the 1-morphisms, one runs through all $1$-morphisms in $\cC$ and also all $1$-morphisms in the various $S(C)$s. Etcetera. Let us simply write $\cX = (X_i,\dots)$ for this list of data, so that $S = \colim \cX$.
   
   Now we use that $S$ is tiny. Then in particular $\hom(S,-)$ preserves the colimit of shape $\cX$:
   $$ \hom(S,S) = \hom(S, \colim \cX) = \colim \hom(S, \cX).$$
   (As with elsewhere in this paper, the symbol ``$=$'' does not mean equality of $(n-1)$-categories, but rather the presence of a canonical equivalence.)
   
   What is $\colim \hom(S, \cX)$; in particular, what are its objects? It is a colimit of $(n-1)$-categories. So let us consider some generic colimit of $(n-1)$-categories, described by some list of data $\cY = (Y_i, \dots)$. Then each $Y_i$ on the list is itself an $(n-1)$-category with a map to $\colim \cY$, and so the objects of the $Y_i$s provide objects of the colimit. These are not necessarily all of the objects of $\colim \cY$, as illustrated by Example~\ref{eg.colimitscondensation}: the objects of the condensate are the condensation modules, and not all modules are free.
   Rather, by unpacking the defining property of the colimit, one finds that every object in $\colim \cY$ is an absolute (!)\ colimit of objects in the image of the $Y_i$s. (Not all absolute such colimits need to exist in $\colim \cY$.)
   These are absolute colimits inside an $(n-1)$-category, and so by induction are simply condensations:
    for each each object in $\colim \cY$, there is an object in some $Y_i$ condensing to it.
      
   In particular, the identity map $\id_S \in \hom(S,S)$ must arise as the condensation of some object in some $\hom(S,X_i)$. Unpacked, this means that there is some pair $X_i = (C,f)$ with $C\in \cC$ and $f : \yo(C) \to S$, and some element $g \in \hom(S,\yo(C))$, such that the element $fg \in \hom(S,S)$ condenses to $\id_S$.
   Thus $\yo(C) \condense S$.
\end{proof}

Recall that, if $\cC$ is a monoidal $n$-category, then a \define{$\cC$-module $n$-category} is a module object for $\cC$ understood as an algebra object in the symmetric monoidal $(n+1)$-category $\Cat_{n}$. Equivalently, a $\cC$-module $n$-category is a functor of $(n+1)$-categories $\rB \cC \to \Cat_n$. If $\cC$ is symmetric monoidal, then so is $\cat{Mod}(\cC)$. (The tensor product is defined by a certain colimit in the $(\infty,1)$-category underlying $\Cat_n$ that we will not recall, and relies on the fact that, as an $(\infty,1)$-category, $\Cat_n$ contains all colimits and is Cartesian closed.)

\begin{corollary} \label{cor.dualizable}
  Let $\cC$ be a symmetric monoidal $n$-category with all condensates.  Let $\cat{Mod}(\cC)$ denote the symmetric monoidal $(n+1)$-category of all $\cC$-module $n$-categories.  Its full subcategory on the $1$-dualizable objects is equivalent to $\Sigma \cC$.
\end{corollary}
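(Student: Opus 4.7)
The plan is to combine Theorem~\ref{thm.absolute} with the classical observation that $1$-dualizable modules over a symmetric monoidal category are exactly retracts of the monoidal unit. First, I would identify $\Psh(\rB\cC)$ with $\cat{Mod}(\cC)$: a functor $\rB\cC \to \Cat_n$ sends the unique object to an $n$-category equipped with a monoidal action of $\cC = \End_{\rB\cC}(\bullet)$, i.e.\ to a $\cC$-module. Under this identification the Yoneda object $\yo(\bullet)$ corresponds to $\cC$ acting on itself, which is the monoidal unit of $\cat{Mod}(\cC)$. Theorem~\ref{thm.absolute} then identifies $\Sigma\cC = \Kar(\rB\cC) = \Cau(\rB\cC)$ with the full subcategory of tiny objects in $\cat{Mod}(\cC)$, and the corollary reduces to the equivalence ``tiny if and only if $1$-dualizable.''

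For ``tiny $\Rightarrow$ $1$-dualizable,'' the proof of Theorem~\ref{thm.absolute} provides for each tiny $M$ a condensation $\cC \condense M$ in $\cat{Mod}(\cC)$. The monoidal unit $\cC$ is trivially self-dual. Since condensations are absolute colimits (Example~\ref{eg.colimitscondensation}), they are preserved by every functor, in particular by the tensor functors $N \otimes -$ and $- \otimes N$. This is what lets me transport the unit and counit of the self-duality $\cC \dashv \cC$ along $\cC \condense M$ to a duality $M \dashv M^\vee$, where $M^\vee$ is the condensate obtained by dualizing the condensation data.

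For ``$1$-dualizable $\Rightarrow$ tiny,'' if $M \dashv M^\vee$ then there is a natural equivalence $\hom_{\cat{Mod}(\cC)}(M,-) \simeq \hom_{\cat{Mod}(\cC)}(\cC, M^\vee \otimes -)$. The tensor product in $\cat{Mod}(\cC)$ is cocontinuous in each variable, so $M^\vee \otimes -$ preserves colimits; and $\hom_{\cat{Mod}(\cC)}(\cC,-)$ preserves colimits because $\cC = \yo(\bullet)$ is tiny by the general fact that $\yo$ lands in $\Cau$. The composition is therefore cocontinuous, hence $M$ is tiny.

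The main obstacle is the transport step in the second paragraph: making rigorous, in the weak-$n$-categorical setting, the claim that pulling the self-duality $\cC \dashv \cC$ through a condensation yields a duality $M \dashv M^\vee$ with all higher snake-identity coherences. Conceptually this reduces to the slogan that condensations are absolute colimits and that tensoring is cocontinuous, so that tensoring sends condensations to condensations and a condensing tower of duality data remains a duality after condensation; but a fully formal verification awaits the theory of enriched $n$-categorical colimits mentioned in \S\ref{subsec.abcolim}.
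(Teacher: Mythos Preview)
Your setup and your ``$1$-dualizable $\Rightarrow$ tiny'' direction coincide with the paper's: both identify $\cat{Mod}(\cC)$ with $\Psh(\rB\cC)$, invoke Theorem~\ref{thm.absolute} to reduce to ``tiny $\Leftrightarrow$ $1$-dualizable,'' and for that direction use $\hom(M,-)\simeq M^\vee\otimes(-)$.

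For ``tiny $\Rightarrow$ $1$-dualizable,'' however, the paper avoids your transport argument entirely. Instead of pushing the self-duality $\cC\dashv\cC$ through a condensation $\cC\condense M$, it simply \emph{writes down} the dual: set $M^* := \hom_\cC(M,\cC)$, and compare the two functors $\hom_\cC(M,-)$ and $M^*\otimes_\cC(-)$. Both take the value $M^*$ on the generator $\cC\in\cat{Mod}(\cC)$, and both preserve colimits (the first because $M$ is tiny, the second because tensoring is cocontinuous); since $\cat{Mod}(\cC)=\Psh(\rB\cC)$ is generated under colimits by $\cC$, the two functors agree, which is exactly the statement that $M^*$ is dual to $M$. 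This sidesteps the coherence issue you flagged: no snake identities need to be transported through a condensation, because the duality is exhibited directly as an equivalence of representable-style functors. Your route is not wrong in principle---retracts of dualizables are dualizable is a robust slogan---but the paper's argument is shorter and does not depend on the unwritten enriched-colimit machinery you invoke at the end.
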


\begin{proof}
  By definition, $\cat{Mod}(\cC) = \Psh(\rB\cC)$.  By Theorem~\ref{thm.absolute}, $\Sigma \cC = \Kar(\rB\cC)$ is the full sub-$(n+1)$-category of $\cat{Mod}(\cC)$ on the tiny objects.
  In any category of modules, an object is tiny if and only if it is $1$-dualizable. In one direction, if  $M \in \cat{Mod}(\cC)$  has a dual module $M^*$, then $\hom_\cC(M,-)$ is equivalent to $M^*\otimes_\cC (-)$ and so preserves colimits. In the other direction, if $M$ is tiny, then $M^* = \hom_\cC(M,\cC)$ is its dual object: the functors $\hom_\cC(M,-)$ and $M^* \otimes_\cC (-)$ take the same value on $\cC \in \cat{Mod}(\cC)$ and both preserve colimits, and so agree.
\end{proof}

Combined with Theorem~\ref{thm.fullrigidity}, we learn the following remarkable fact, generalizing the characterization of $1$-dualizable linear categories from \cite[Section 2]{MR1670122}:

\begin{corollary} \label{cor.fullydualizable}
  Suppose that $\cC$ is a fully-rigid symmetric monoidal $n$-category with all condensates. Then a $\cC$-module $n$-category is $1$-dualizable if and only if it is fully-dualizable. \qed
\end{corollary}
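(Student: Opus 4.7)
The plan is to deduce the corollary immediately from the two preceding results; one direction is purely definitional, since every fully-dualizable object is by definition $1$-dualizable.

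For the nontrivial direction, suppose $M \in \cat{Mod}(\cC)$ is $1$-dualizable. First I would invoke Corollary~\ref{cor.dualizable} to identify the full sub-$(n+1)$-category of $\cat{Mod}(\cC)$ spanned by the $1$-dualizable objects with $\Sigma\cC$, so that up to equivalence $M$ may be regarded as an object of $\Sigma\cC$. Then I would apply Theorem~\ref{thm.fullrigidity} to the fully rigid symmetric monoidal $n$-category $\cC$ (which has all condensates by hypothesis) to conclude that $\Sigma\cC$ is a fully rigid symmetric monoidal $(n+1)$-category, meaning that every $k$-morphism with $1\leq k\leq n$ admits both a left and a right adjoint.

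Finally I would unpack the definition of full-dualizability in an $(n+1)$-category: a $1$-dualizable object $M$ is fully-dualizable precisely when the evaluation and coevaluation $1$-morphisms participating in the duality $M \dashv M^{*}$ admit both adjoints, the units and counits of those adjunctions (which are $2$-morphisms) admit both adjoints, and so on iteratively up through $n$-morphisms. Because $\Sigma\cC$ is $n$-rigid, all of these adjoints exist inside $\Sigma\cC$, and because the inclusion $\Sigma\cC\hookrightarrow \cat{Mod}(\cC)$ is fully faithful and symmetric monoidal, these also serve as the requisite adjoints inside the ambient $(n+1)$-category $\cat{Mod}(\cC)$. Hence $M$ is fully-dualizable.

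There is essentially no obstacle in this argument: the entire content is packaged into Corollary~\ref{cor.dualizable} and Theorem~\ref{thm.fullrigidity}. The only mildly subtle point worth emphasizing in writing is the transport of adjunctions across the fully faithful symmetric monoidal inclusion $\Sigma\cC\hookrightarrow \cat{Mod}(\cC)$, which is routine but should be noted explicitly so that the reader sees why full dualizability computed internally to $\Sigma\cC$ is the same as full dualizability computed in $\cat{Mod}(\cC)$.
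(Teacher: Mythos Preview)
Your proposal is correct and matches the paper's own reasoning: the corollary is stated with a bare \qed precisely because it is meant to follow immediately by combining Corollary~\ref{cor.dualizable} with Theorem~\ref{thm.fullrigidity}, exactly as you do. Your additional remark about transporting adjunctions along the fully faithful symmetric monoidal inclusion $\Sigma\cC \hookrightarrow \cat{Mod}(\cC)$ is a helpful clarification that the paper leaves implicit.
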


\subsection{Direct sums and additive $n$-categories}

Suppose that $\cC$ is a higher category equipped with some notion of ``addition''  of $1$-morphism. A \define{direct sum} of objects $X,Y \in \cC$ is an object $X \oplus Y$ equipped with maps
$$ f_X : X \oplus Y \to X, \quad f_Y : X \oplus Y \to Y, \quad g_X : X \to X \oplus Y, \quad g_Y : Y \to X \oplus Y, $$
such that
$$ g_X f_X + g_Y f_Y \cong \id_{X \oplus Y}, \quad f_A g_B \cong \begin{cases} \id_A, & A = B, \\ 0, & A \neq B. \end{cases} $$
Such $X \oplus Y$ is then both the product and coproduct of $X$ with $Y$. The category $\cC$ \define{has direct sums} if it has a \define{zero object} --- an object $0 \in \cC$ such that $\id_0 = 0$ --- and if every pair of objects $X,Y$ admits a direct sum $X \oplus Y$. %(The direct sum and the zero object are each unique up to a contractible space of choices.)

\begin{remark}
  Our discussion is intentionally slightly sloppy about the difference, vital in higher category theory, between data and property. Specifically, should the choice of isomorphism be part of the data of the direct sum? The answer is that if all isomorphisms $g_X f_X + g_Y f_Y \cong \id_{X \oplus Y}$, $f_X g_X \cong \id_X$, etc., are part of the data of $X \oplus Y$, then one should also include higher coherence data relating these choices of isomorphisms. A more compact option is to judiciously choose some of the isomorphisms as data, and declare the existence of the others as property.
\end{remark}

Direct sum is itself a form of ``addition.'' As such, the most natural way for an $n$-category, with $n\geq 2$, to have an ``addition'' on $1$-morphisms is if it has an addition on $2$-morphisms and the addition on $1$-morphisms is declared to be the direct sum of $1$-morphisms. One is led naturally to the following notion. An \define{$\cat{Ab}$-enriched $n$-category} is an $n$-category such that the sets of $n$-morphisms (recall from the start of Section~\ref{sec.Kar} that we work with ``weak $n$-'' rather than ``$(\infty,n)$-'' categories, so that $n$-morphisms do form sets) are given the structure of abelian groups, and such that all compositions are multilinear on $n$-morphisms. The abelian group structure on $n$-morphisms determines a notion of direct sum on $(n-1)$-morphisms, which determines a notion of direct sum on $(n-2)$-morphisms, all the way to objects. An \define{additive $n$-category} is an $\cat{Ab}$-enriched $n$-category which has direct sums at all levels.

\begin{lemma}\label{lemma.sigmadirectsum}
Let $\cC$ be a monoidal additive $(n-1)$-category with condensates and direct sums.  Then $\Sigma\cC = \Kar(\rB \cC)$, which is by construction an $\cat{Ab}$-enriched $n$-category with condensates, also has direct sums.
\end{lemma}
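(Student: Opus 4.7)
The plan is to build direct sums in $\Sigma\cC = \Kar(\rB\cC)$ componentwise from those in $\cC$. The zero object is $0 \in \cC$ equipped with its unique (zero) condensation algebra structure: any bimodule between $0$ and any other condensation algebra is a morphism factoring through $0 \in \cC$, hence zero, so $0$ is both initial and terminal in $\Sigma\cC$.

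Given two condensation algebras $e_1, e_2 \in \cC$, I form $e := e_1 \oplus e_2 \in \cC$. Because $\cC$ is monoidal additive with direct sums, the monoidal product is biadditive, so $e^{\otimes k} = \bigoplus_{\mathbf{i} \in \{1,2\}^k} e_{i_1} \otimes \cdots \otimes e_{i_k}$. I equip $e$ with the \emph{block-diagonal} condensation algebra structure: each structure map appearing in Proposition~\ref{prop.condalg} is the sum over $i \in \{1,2\}$ of the corresponding structure map of $e_i$ supported on the constant summand $e_i^{\otimes a}$, with zero on every mixed summand. The condensation algebra axioms then reduce summand-by-summand to the axioms for $e_1$ and $e_2$ (trivially on mixed summands), so $e$ is a condensation algebra.

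Next, I construct bimodules $g_{e_i} : e_i \to e$ and $f_{e_i} : e \to e_i$ for $i = 1, 2$, each represented by $e_i \in \cC$ with its intrinsic $e_i$-action on one side and with the $e$-action factoring through the projection $e \to e_i$ (the other summand acting as zero) on the other. Computing tensor products via Definition~\ref{defn.tensorproduct}, I find $f_{e_A} \otimes_e g_{e_B} \cong \id_{e_A}$ when $A = B$---using the canonical condensation $e_A^{\otimes 2} \condense e_A$ to implement the tensor product over $e_A$---and $f_{e_A} \otimes_e g_{e_B} \cong 0$ when $A \neq B$, by orthogonality of the two projections on the $e$-action. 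Moreover, $g_{e_i} \otimes_{e_i} f_{e_i}$ is $e_i$ viewed as an $e$-$e$-bimodule (extended by zero off-diagonal), and the additive sum $g_{e_1} \otimes_{e_1} f_{e_1} + g_{e_2} \otimes_{e_2} f_{e_2}$ in the additive bimodule category reassembles to $e_1 \oplus e_2 = e$, which is $\id_e$. These are exactly the defining axioms of a direct sum $e \cong e_1 \oplus e_2$ in $\Sigma\cC$.

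The main step requiring care is verifying that the block-diagonal structure on $e$ satisfies all the higher coherences of a condensation algebra, and likewise for the bimodules $f_{e_i}, g_{e_i}$. This reduces to bookkeeping: each commuting condensation cube from Proposition~\ref{prop.condalg} decomposes as a direct sum of cubes indexed by $\{1,2\}^k$, of which only the two constant ones carry nontrivial data; those are the corresponding cubes for $e_1$ and $e_2$, coherent by assumption, while all mixed cubes are zero and automatically coherent. No input beyond biadditivity of $\otimes$ is needed.
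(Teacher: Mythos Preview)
Your proof is correct and follows exactly the approach sketched in the paper: equip the direct sum $e_1 \oplus e_2$ in $\cC$ with the block-diagonal condensation algebra structure and verify that it represents the direct sum in $\Sigma\cC$. The paper states this in two sentences without writing out the bimodules or checking the axioms; you have simply filled in those details.
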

\begin{proof}
Suppose objects $A,B \in \Sigma\cC$ are represented by condensation algebras, also denoted $A$ and $B$, in $\cC$. Then the direct sum $A \oplus B$ in $\cC$ carries a distinguished condensation algebra structure, and it represents the direct sum of objects in $\Sigma \cC$.
\end{proof}

Lemma~\ref{lemma.sigmadirectsum} relies on the fact that $\rB\cC$ has only one object. If it were replaced by an $n$-category with multiple objects, then condensation monads living at different objects would not be direct-summable in the Karoubi completion. Instead, their direct sum must be included by hand. In general,
suppose $\cC$ is an additive $n$-category whose hom $(n-1)$-categories have direct sums. The \define{matrix category} of $\cC$, denoted $\Mat(\cC)$, is the $n$-category whose objects are vectors (i.e.\ finite sequences) of objects in $\cC$, and whose morphisms are matrices of morphisms in $\cC$. The matrix category is precisely the closure of $\cC$ under direct sums. It is not hard to show furthermore that $\Kar(\Mat(\cC))$ has direct sums, and so the combination $\Kar\circ\Mat$ is the closure under both direct sums and condensates.

Indeed, the results of \S\ref{subsec.abcolim} apply essentially verbatim in the additive world, provided $\Kar$ is replaced by $\Kar\circ \Mat$. Indeed, an $\cat{Ab}$-enriched $n$-category is nothing but an $(\infty,1)$-category enriched, in the $(\infty,1)$-sense, in $\cat{Ab}$-enriched $(n-1)$-categories, and so the eventual development of a theory of enriched colimits in the $(\infty,1)$-world will include the case of colimits in $\cat{Ab}$-enriched $n$-categories.
$\cat{Ab}$-enriched colimits include colimits, but one may also add $k$-morphisms together when giving the data $\cX$; an $\cat{Ab}$-enriched colimit is absolute if it is preserved by all $\cat{Ab}$-enriched functors.
\begin{example}
  Finite direct sums are $\cat{Ab}$-enriched-absolute.
\end{example}
$\cat{Ab}$-enriched $(n-1)$-categories naturally form an $\cat{Ab}$-enriched $n$-category $\cat{AbCat}_{n-1}$, and an \define{$\cat{Ab}$-enriched presheaf} on an  $\cat{Ab}$-enriched $n$-category $\cC$ is a functor of $\cat{Ab}$-enriched $n$-categories $\cC \to \cat{AbCat}_{n-1}$. The $\cat{Ab}$-enriched $n$-category of $\cat{Ab}$-enriched presheaves will be the universal closure under $\cat{Ab}$-enriched colimits. It still makes sense to talk about tiny objects and the Cauchy completion: in the $\cat{Ab}$-enriched world, an object $S$ is \define{tiny} if the $\cat{AbCat}_{n-1}$-valued functor $\hom(S,-)$ preserves colimits.

To modify Theorem~\ref{thm.absolute} to the $\cat{Ab}$-enriched setting, one must make only one change: when studying a colimit $\colim \cY$ of $(n-1)$-categories, in the $\cat{Ab}$-enriched case the objects of the colimit are absolute-in-the-$\cat{Ab}$-enriched-sense objects in the image of the $Y_i$s, and so by induction are condensations of direct sums, rather than merely condensations. From this, one finds $\id_S$ as a condensation of a direct sum of objects in $\cC$. Thus:
\begin{theorem}
  The Cauchy completion among $\cat{Ab}$-enriched $n$-categories is $\Kar(\Mat(-))$. \qed
\end{theorem}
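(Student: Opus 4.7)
The plan is to adapt the proof of Theorem~\ref{thm.absolute} to the additive setting, where the role of arbitrary $n$-categorical colimits is played by additive colimits, and where there are now \emph{two} basic absolute additive colimits to account for --- finite direct sums and condensates --- corresponding respectively to the two constructions $\Mat(-)$ and $\Kar(-)$.

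First, for the inclusion $\Kar(\Mat(\cC)) \hookrightarrow \Cau(\cC)$ (interpreting $\Cau$ now in the additive sense, as Cauchy completion among additive $n$-categories): the additive Yoneda embedding $\yo : \cC \to \Psh(\cC)$ lands in tiny objects by the same pointwise-colimit argument as in the proof of Theorem~\ref{thm.absolute}, since additive colimits of additive presheaves are computed pointwise in $\cat{AddCat}_{n-1}$. The tiny objects are closed under absolute additive colimits, and both finite direct sums and condensates are absolute (the former by the Example preceding the Theorem, the latter by Example~\ref{eg.colimitscondensation}, whose argument applies verbatim in the additive world). Hence $\yo$ extends to a functor out of $\Kar(\Mat(\cC))$ landing in tiny presheaves; full faithfulness follows because $\Mat$ and $\Kar$ are built entirely out of diagrams in $\cC$, just as in the proof of Theorem~\ref{thm.Kar}.

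For the reverse inclusion, I would mimic the second half of the proof of Theorem~\ref{thm.absolute}. Given a tiny additive presheaf $S$, write $S$ as a (large) additive colimit of representables $\cX = (\yo(C_i),\dots)$ by the universal density presentation. Tininess then gives $\hom(S,S) = \colim \hom(S,\cX)$ in $\cat{AddCat}_{n-1}$. The object $\id_S$ must therefore be an absolute additive colimit, taken in this $(n-1)$-category, of objects of the form $g \in \hom(S,\yo(C_i))$. By induction on category level, an absolute additive colimit in an $(n-1)$-category is built from finite direct sums of generators followed by a condensation. Unpacking this produces finitely many objects $C_1,\dots,C_k$, morphisms $f : \yo(C_1 \oplus \cdots \oplus C_k) \to S$ and $g$ in the reverse direction (assembled from the individual $g_i$ via the direct-sum structure), and a condensation $fg \condense \id_S$; that is, a condensation $\yo(C_1 \oplus \cdots \oplus C_k) \condense S$ inside $\Cau(\cC)$. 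By uniqueness of condensates (Theorem~\ref{thm.uniqueness}), $S$ is equivalent to an object of $\Kar(\Mat(\cC))$.

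The main obstacle is the inductive step: the assertion that an absolute additive colimit in an additive $(n-1)$-category is presentable as a condensation of a finite direct sum of the generators. This is the additive analogue of the key identification ``absolute colimit $=$ condensation'' used in the proof of Theorem~\ref{thm.absolute}, and it relies on the enriched-colimit formalism in the $(\infty,1)$-world (enrichment now in additive $(n-1)$-categories) that is only informally in place in this paper. Granting that formalism, however, the rest of the proof reduces to bookkeeping: the only novelty compared to Theorem~\ref{thm.absolute} is that one must allow finite sums when assembling $f$ and $g$ from the individual $g_i : S \to \yo(C_i)$, which is precisely what $\Mat(\cC)$ was built to permit.
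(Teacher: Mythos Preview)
Your proposal is correct and follows essentially the same approach as the paper: the paper's proof simply says to repeat the proof of Theorem~\ref{thm.absolute} with the single change that, in the additive case, objects of $\colim \cY$ are absolute-in-the-additive-sense colimits of objects in the image of the $Y_i$s, hence by induction are condensations of direct sums rather than merely condensations, so that $\id_S$ arises as a condensation of a finite direct sum of objects in $\cC$. You have spelled this out more carefully than the paper does, including the forward inclusion and the explicit acknowledgment of the reliance on the enriched-colimit formalism, but the argument is the same.
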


Finally, in light of Lemma~\ref{lemma.sigmadirectsum}, we recommend the following notation. Suppose $\cC$ is monoidal with a notion of addition which may not be the direct sum. Then $\Sigma\cC$ will mean $\Kar(\Mat(\rB\cC))$ rather than merely $\Kar(\rB\cC)$.
For additive $n$-categories, this meaning of $\Sigma$ agrees with our earlier meaning except when $n=0$. An additive $0$-category is an abelian group, and a monoidal additive $0$-category is a ring $R$, in which case $\Sigma R = \Kar(\Mat(\rB R))$ is precisely the category of finitely generated projective $R$-modules. With this notation, Corollaries~\ref{cor.dualizable} and~\ref{cor.fullydualizable}, and their proofs, apply verbatim with the words ``additive'' included, even when $n=0$.

\begin{example}
  If $\bK$ is a field, then $\Vect_\bK \simeq \Sigma \bK$, where as always in this paper ``$\Vect_\bK$'' means the category of finite-dimensional and $\Sigma \bK$ means the additive version. Then Corollary~\ref{cor.tqft} could be phrased as ``Each object of $\Sigma^n \bC$ determines an $n$-dimensional extended TQFT,'' and Theorem~\ref{thm.Hamiltonian} could be phrased as ``Each object of $\Sigma^n\bC$ determines an $n$-dimensional commuting projector Hamiltonian system,'' where in both cases $n = (n-1)+1$ is the spacetime dimension.  Corollaries~\ref{cor.dualizable} and~\ref{cor.fullydualizable}, when applied to $\cC=\Sigma^n \bK$, say that a $\bK$-linear $n$-category is $1$-dualizable, in the $(n+1)$-category of all Cauchy-complete $\bK$-linear $n$-categories, if and only if it is fully dualizale, and that the $(n+1)$-category of dualizable $\bK$-linear $n$-categories is precisely to $\Sigma^{n+1}\bK$.
\end{example}

\bibliography{lattice}{}
\bibliographystyle{alpha}

\end{document}